\newcounter{numberofremark}
\newcommand\nothing[1]{}
\newcommand{\dcl}{\DeclareMathOperator}
\dcl\C{\mathbb C}
\dcl\g{\mathfrak{g}}
\dcl\gl{\mathfrak{gl}}
\dcl\gr{gr}
\dcl\gts{\mathfrak{G}}
\dcl\K{\mathbb K}
\dcl\NN{\mathbb{N}}
\dcl\qdet{qdet}
\dcl\Sp{Specm}
\dcl\tr{\textup{tr}\ }
\dcl\wk{\mathbb{W}}
\dcl\Z{\mathbb{Z}}
\dcl\Y{Y}
\dcl\U{U}
\dcl\T{T}
\renewcommand\k{{\Bbbk}}
\newlength\yStones
\newlength\xStones
\newlength\xxStones
\def\Stones{\pst@object{Stones}}
\def\Stones@i#1{%
  \pst@killglue%
  \begingroup%
  \use@par%
  \setlength\xxStones{\xStones}%
  \expandafter\Stones@ii#1,,\@nil
  \endgroup
  \global\addtolength\xStones{0.6cm}%
  \global\addtolength\yStones{-7.5mm}}%
\def\Stones@ii#1,#2,#3\@nil{%
  \rput(\xxStones,\yStones){%
    \psframebox[framesep=0]{%
      \parbox[c][6mm][c]{11mm}{\makebox[11mm]{$#1$}}}}%
  \addtolength\xxStones{1.2cm}%
  \ifx\relax#2\relax\else\Stones@ii#2,#3\@nil\fi}
\def\Stone#1{\fbox{\makebox[8mm]{\strut#1}}\kern2pt}
\newtheorem{theorem}{Theorem}[section]
\newtheorem{lemma}[theorem]{Lemma}
\newtheorem{corollary}[theorem]{Corollary}
\newtheorem{proposition}[theorem]{Proposition}
\newtheorem{example}[theorem]{Example}
\newtheorem{remark}[theorem]{Remark}
\newtheorem{notation}[theorem]{Notation}
\newtheorem{definition}[theorem]{Definition}
\begin{document}
\allowdisplaybreaks

\title{Gelfand-Tsetlin Varieties for Yangians}

\author{Germán Benitez Monsalve}
\address{\noindent Departamento de Matem\'atica, Instituto de Ciências Exatas, Universidade Federal do
Amazonas,  Manaus AM, Brazil}
\email{gabm03@gmail.com}


\begin{abstract}

S. Ovsienko proved that the Gelfand-Tsetlin variety for $\mathfrak{gl}_n$ is equidimensional (i.e., all its irreducible components have the same dimension) of dimension $\frac{n(n-1)}{2}$. This result is known as \textit{Ovsienko's Theorem} and it has important consequences in Representation Theory of Algebras. In this paper, we will study the Gelfand-Tsetlin varieties for restricted Yangian $Y_p(\mathfrak{gl}_n)$ and the equidimensionality for $Y_p(\mathfrak{gl}_3)$.

\end{abstract}

\maketitle



\tableofcontents    


\section{Introduction}

We consider the Gelfand-Tsetlin variety associated to the Gelfand-Tsetlin subalgebra $\Gamma$ for General Linear Lie algebra $\gl_n$ over an algebraically closed field $\k$. In \cite{Ovs} S. Ovsienko has proved that such variety is equidimensional (i.e., its all irreducible components has the same dimension) of dimension $\frac{n(n-1)}{2}$. That result is known as \textit{``Ovsienko's Theorem''} and, as a consequences, the Gelfand-Tsetlin variety is a complete intersection. 

\vspace{0.1cm}

A well-known Kostant's Theorem says that the Universal Enveloping algebra $U(\mathfrak{g})$ of a semisimple Lie algebra $\mathfrak{g}$ over $\C$,  is a free module over its center (Theorem ($0.13$) in \cite{K}). When $\mathfrak{g}=\gl_n$, the variety associated to the center of the universal
enveloping algebra $U(\gl_n)$ coincides with the variety of nilpotent matrices, which is irreducible of dimension $n^2-n$. Also, S. Ovsienko proved in \cite{O} that $U(\gl_n)$ is free as a left (right) module over its Gelfand-Tsetlin subalgebra.

 A generalization of Kostant's Theorem in \cite{K} was proved by V. Futorny and S. Ovsienko in \cite{FutOvs05} for a class of special filtered algebras, where they have stablished the following kye result:

\vspace{0.1cm}

\noindent \textbf{Theorem:}
\textit{Let $U$ be a special filtered algebra, $g_1,g_2,\ldots,g_t\in U$ mutually commu\-ting elements such that its graded images determine a complete intersection for the graded algebra associated to $U$, and let $\Gamma=\k[g_1,g_2,\ldots,g_t]$. Then $U$ is a free left (right) $\Gamma$-module.}  

\vspace{0.1cm}

As a consequence of this theorem the authors in \cite{FutOvs05} showed that restricted Yangian $Y_p(\gl_n)$ of level $p$ for $\gl_n$ is free over its center. For the full Yangian $Y(\gl_n)$ this was shown previously by  A. Molev, M. Nazarov and G. Olshanski\u{i} in \cite{MNO}. In \cite{FMO} V. Futorny, A. Molev\ and\ S. Ovsienko showed that the finite $W$ algebras associated with $\gl_2$ (in particular, the restricted Yangian  $Y_p(\gl_2)$ of level $p$) is free over its Gelfand-Tsetlin subalgebra.

Gelfand-Tsetlin algebras appears in several contexts. Such algebras are related to important problems in representation theory of Lie algebras, for instances, E. B. Fomenko and A. S. Mischenko in \cite{FomMis} related such algebras in connection with the solutions of the Euler equation. Also, {\`E}. B. Vinberg in \cite{Vin} related the Gelfand-Tsetlin subalgebras in connection with subalgebras of ma\-xi\-mal Gelfand-Kirillov dimension of the universal enveloping algebra of a simple Lie algebra. B. Kostant and N. Wallach in {\cite{KW-1} and \cite{KW-2} used these subalgebras in connection with classical mechanics.

\vspace{0.5cm}

In this paper, we will study the Gelfand-Tsetlin variety for restricted Yangian $Y_p(\gl_n)$ and its equidimensio\-na\-lity. The paper is organized as follows: In section $\mathsection 2$ we introduce all necessary definitions, notations and results used throughout the paper, such as, equidimensionality of varieties and regular sequences. In section $\mathsection 3$ we recall the definition of Gelfand-Tsetlin subalgebras, Gelfand-Tsetlin varieties, Ovsienko's Theorem, its Weak Version (Theorem \eqref{fracagln}) and some consequences.

In section $\mathsection 4$ we define the restricted Yangian $Y_p(\gl_n)$ of level $p$ for $\gl_n$, its Gelfand-Tsetlin subalgebra and its Gelfand-Tsetlin variety
$\gts$. We prove the equidimensionality of the Gelfand-Tsetlin variety $\gts$ for $Y_2(\gl_3)$ (Proposition \eqref{Y2(3)}) exhi\-biting its decomposition in irreducible components. Also, we show in the Proposition \eqref{decomp1} a decomposition (it is not in irreducible components) of the Gelfand-Tsetlin variety for $Y_p(\gl_n)$ in the form
  $$\gts=\gts_1\cup\gts_2\cup\cdots\cup\gts_{p+1}.$$
In the Corollary \eqref{decompweak}, we show a decomposition of $\gts_1$ for $Y_p(\gl_n)$ inn the form
 $$\gts_1=\wk_{1}\cup\wk_{2},\ \ \mbox{if}\ \ p=1,2$$
 $$\gts_1=\wk^{p}\cup\wk_{1}\cup\wk_{2},\ \ \mbox{if}\ \ p\geq 3.$$
When $n=3$, the variety $\gts_1$ will be called \textit{Weak Version of the Gelfand-Tsetlin variety of} $\gts$ for $Y_p(\gl_3)$. Finally, we prove that the weak version $\gts_1$ of the Gelfand-Tsetlin variety $\gts$ for $Y_p(\gl_3)$ is equidimensional of dimension $\dim\gts_1=3p$. This is the aim result of this paper (Theorem  \eqref{G1}).

\vspace{0.5cm}

\noindent{\bf Acknowledgements.} The author wishes to express his gratitude to Vyacheslav Futorny for suggesting the problem and for many useful discussions during this
project.


\section{Preliminaries}

Throughout the paper we fix an algebraically closed field $\k$ of characteristic zero.  

For a \textit{reduced} (without nilpotent elements) \textit{affine} $\k-$\textit{algebra} $\Lambda$ (that is, an associative and commutative finitely gene\-rated $\k-$algebra), we denote by $\Sp\Lambda$ the variety of all maximal ideals of $\Lambda$. If $\Lambda$ is a polynomial algebra in $n$ variables, then we identify $\Sp\Lambda$ with $\k^n$. For an ideal $I\subseteq\Lambda$, denote by $V(I)\subseteq\Sp\Lambda$, the set of all zeroes of $I$, called
\textit{variety}. If $I$ is generated by $g_1,g_2,\ldots,g_r$, then we write $I=(g_1,g_2,\ldots,g_r)$ and $V(I)=V(g_1,g_2,\ldots,g_r)$. A variety $V$ is an \textit{equidimensional variety} if all its irreducible components have the same dimension and we denote by $\dim V$ the dimension of $V$.

We will denote the center of an associative algebra $\mathcal A$ by $Z(\mathcal A)$, the graded algebra of $\mathcal A$ by $\gr\mathcal(A)$ and the Universal enveloping algebra of a Lie algebra $\g$ by $U(\g)$.


A sequence $g_1,g_2,\ldots,g_t$ in a ring $R$ is called \emph{regular} if, the image class of $g_i$ is not a zero divisor, and is not invertible in $R/(g_1,g_2,\ldots,g_{i-1})$ for any $i=1,2,\ldots,t$.  


\begin{proposition}
\label{regperm}
Let $R$ be a noetherian ring and $g_1,g_2,\ldots,g_t$ a regular sequence of 
$R$. If $R$ is a graded ring and each $g_i$ is homogeneous of positive degree, then any permutation of $g_1,g_2,\ldots,g_t$ is regular in $R$.
\end{proposition}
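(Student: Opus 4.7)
The plan is to reduce the statement to the case of swapping two adjacent elements, and then to exploit the grading decisively. Since adjacent transpositions generate the symmetric group, it suffices to show that if $g_1,\ldots,g_t$ is regular, then interchanging $g_i$ and $g_{i+1}$ produces a regular sequence. Passing to $R' := R/(g_1,\ldots,g_{i-1})$, which is still a noetherian graded ring because $(g_1,\ldots,g_{i-1})$ is a homogeneous ideal, and writing $a := g_i$, $b := g_{i+1}$, I would reduce to the following two-element statement: if $a, b \in R'$ are homogeneous of positive degree with $a$ a non-zero-divisor and $b$ a non-zero-divisor modulo $a$, then $b$ is a non-zero-divisor and $a$ is a non-zero-divisor modulo $b$.

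First I would dispatch the easier half, that $a$ remains regular modulo $b$. From $ax = by$ I would deduce, using the hypothesis that $b$ is a non-zero-divisor in $R'/(a)$, that $y \in (a)$; writing $y = az$ gives $a(x - bz) = 0$, and since $a$ is a non-zero-divisor this yields $x \in (b)$. No grading is used here.

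The substantive step will be to show that $b$ itself is a non-zero-divisor on $R'$, and this is where the grading is indispensable. Given $bx = 0$, I would split $x$ into homogeneous components (using that $b$ is homogeneous) to reduce to the case of $x$ homogeneous of some degree $d$. Reducing modulo $a$ then forces $x \in (a)$, say $x = ay_1$; substituting back and cancelling $a$ yields $by_1 = 0$, and iterating produces a sequence of homogeneous elements $y_n$ with $x = a^n y_n$ and $\deg y_n = d - n \deg a$. Because $\deg a$ is strictly positive, after finitely many steps this degree becomes negative, forcing $y_n = 0$ and hence $x = 0$. The non-unit requirement for the swapped sequence is automatic: the image of a homogeneous element of positive degree in a graded quotient of $R$ cannot be a unit.

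The hard part will be this infinite-descent step, which pins down exactly why the graded hypothesis cannot be dropped: for general noetherian rings, regular sequences are not permutable, as witnessed by the classical example $(x,\,y(x-1),\,z(x-1))$ in $\k[x,y,z]$. In the graded positive-degree setting, however, the descent translates into a clean statement about degrees, and the conclusion $\bigcap_{n \geq 0} (a^n) = 0$ follows immediately.
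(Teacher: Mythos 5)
Your argument is correct: the paper gives no proof of this proposition, only a citation to Matsumura, and what you have written is essentially the standard argument found in that reference --- reduction to adjacent transpositions, the easy half by cancellation against the non-zero-divisor $a$, and the degree-descent (graded Nakayama) step showing that the annihilator of $b$ satisfies $N=aN$ and hence vanishes. The one hypothesis you use tacitly is that the grading is non-negative, so that a homogeneous element of negative degree is zero and the descent terminates; this holds in the affine graded setting of the paper, so the proof stands as written.
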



\begin{proof} 
Theorem ($28$) in \cite{Mats70} (page $102$) or \cite{Mats89} (page $127$).

\end{proof}


\begin{corollary}
\label{regsubseq}
Let $R$ be a noetherian ring and $g_1,g_2,\ldots,g_t\in R$ a regular sequence of $R$. If $R$ is a graded ring and each $g_i$ is homogeneous of positive degree, then any subsequence of $g_1,g_2,\ldots,g_t$ is regular in $R$.
\end{corollary}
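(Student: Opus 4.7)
My plan is to deduce this corollary directly from Proposition \ref{regperm} together with the observation that an initial segment of a regular sequence is regular by definition.

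First, I would fix an arbitrary subsequence $g_{i_1}, g_{i_2}, \ldots, g_{i_s}$ with $i_1 < i_2 < \cdots < i_s \leq t$. The idea is to reduce the problem to the case of an initial segment, where regularity is immediate from the definition: indeed, if $h_1, \ldots, h_t$ is a regular sequence in $R$, then for any $s \leq t$ the subsequence $h_1, \ldots, h_s$ is regular, since the conditions defining regularity of $h_1, \ldots, h_s$ are literally the first $s$ conditions defining regularity of $h_1, \ldots, h_t$.

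Next, I would construct a permutation $\sigma$ of $\{1, 2, \ldots, t\}$ placing the chosen indices in the first $s$ positions, so that $\sigma(k) = i_k$ for $k = 1, \ldots, s$, while $\sigma$ sends $\{s+1, \ldots, t\}$ bijectively onto the complement $\{1, \ldots, t\} \setminus \{i_1, \ldots, i_s\}$ in any order. By Proposition \ref{regperm}, applied to the noetherian graded ring $R$ and the homogeneous positive-degree regular sequence $g_1, \ldots, g_t$, the permuted sequence $g_{\sigma(1)}, g_{\sigma(2)}, \ldots, g_{\sigma(t)}$ is again a regular sequence in $R$. Its initial segment of length $s$ is exactly $g_{i_1}, g_{i_2}, \ldots, g_{i_s}$, which is therefore regular by the observation above.

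There is essentially no obstacle: the whole content of the corollary is packaged into Proposition \ref{regperm}, and the corollary only has to arrange the reordering correctly. The only thing I want to be careful about is checking that the hypotheses of Proposition \ref{regperm} transfer unchanged (noetherianity of $R$ and homogeneity of positive degree of each $g_{\sigma(j)}$), which is clear since $\sigma$ is just a reindexing of the same elements.
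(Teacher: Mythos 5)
Your argument is correct and is exactly the paper's proof (the paper simply cites Proposition \eqref{regperm} together with the definition of a regular sequence): permute the chosen indices to the front using Proposition \eqref{regperm}, then observe that an initial segment of a regular sequence is regular by definition. No gaps; you have just written out the details the paper leaves implicit.
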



\begin{proof} 
By Proposition \eqref{regperm} and the definition of a regular sequence.

\end{proof}

\begin{proposition}
\label{regvseq}
Let $R$ be an affine algebra of Krull dimension $n$, and 
$g_1,g_2,\ldots,g_t$ be a sequence of elements in $R$ with $0\leq t\leq n$. 

\begin{enumerate}[i.]
 \item 
If $R$ is graded and $g_1,g_2,\ldots,g_t$ are homogeneous, then
$g_1,g_2,\ldots,g_t$ is regular in $R$ if and only if the sequence
$g_1,g_2,\ldots,g_t$ is a complete intersection for $R$. 
 
 \item If $R$ is a Cohen-Macaulay algebra, then the sequence $g_1,g_2,\ldots,g_t$ is a complete intersection for $R$ if and only if the variety $V(g_1,g_2,\ldots,g_t)$ is equidimensional of dimension $n-t$.
\end{enumerate}
\end{proposition}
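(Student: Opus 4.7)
The plan is to reduce both parts to the standard dictionary between heights of ideals, dimensions of quotients, and regular sequences, invoking Krull's Hauptidealsatz for one direction of (i) and Macaulay's unmixedness theorem for (ii). Throughout I will use the identity $\dim R/I + \mathrm{ht}(I) = n$, valid in any equidimensional catenary affine algebra over $\k$, together with the fact that ``complete intersection for $R$'' means $\mathrm{ht}(g_1,\ldots,g_t) = t$, equivalently $\dim R/(g_1,\ldots,g_t) = n-t$.

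For the forward direction of (i) I proceed by induction on $t$, setting $I_j = (g_1,\ldots,g_j)$. Since $g_j$ is not a zero-divisor in $R/I_{j-1}$, it lies in no minimal prime of $R/I_{j-1}$, so Krull's principal ideal theorem forces $\dim R/I_j = \dim R/I_{j-1} - 1$; inductively $\dim R/I_t = n-t$, giving the complete intersection condition. For the converse, the graded hypothesis is essential. Assuming inductively that $g_1,\ldots,g_{i-1}$ is regular (and hence a complete intersection of height $i-1$ by the forward direction), I show that $g_i$ is a non-zero-divisor modulo $I_{i-1}$: otherwise $g_i$ would lie in an associated prime $P$ of $I_{i-1}$, which is necessarily homogeneous since $I_{i-1}$ and $g_i$ are, and which has height $i-1$ by the graded analogue of unmixedness for regular sequences; then $I_i \subseteq P$ would force $\mathrm{ht}(I_i) \leq i-1$, contradicting the count $\mathrm{ht}(I_i) = i$ forced by Corollary \ref{regsubseq} applied to the subsequence $g_1,\ldots,g_i$ of the complete intersection $g_1,\ldots,g_t$. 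The step I expect to be the main obstacle is precisely this converse: one must carefully control associated primes in the graded setting, and the argument depends delicately on the positive-degree homogeneity, which ensures that associated primes are homogeneous and that no unit among the $g_i$ trivialises the dimension bookkeeping.

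For part (ii) the Cohen--Macaulay hypothesis yields the cleanest equivalence. Complete intersection means $\mathrm{ht}(I_t) = t$, equivalently $\dim R/I_t = n-t$. In a Cohen--Macaulay ring, Macaulay's unmixedness theorem guarantees that every associated prime of such an ideal has height exactly $t$, so $R/I_t$ is equidimensional of Krull dimension $n-t$; translating via $\Sp$ shows that $V(g_1,\ldots,g_t)$ is equidimensional of dimension $n-t$. Conversely, equidimensionality of $V(I_t)$ of dimension $n-t$ immediately gives $\dim R/I_t = n-t$, hence $\mathrm{ht}(I_t) = t$, so the sequence is by definition a complete intersection. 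No further obstacle arises here, as both implications reduce to direct applications of unmixedness and the dictionary between algebraic and geometric dimension.
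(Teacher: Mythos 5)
The paper itself offers no argument here: its ``proof'' is the single line ``Proposition (2.1) in \cite{FutOvs05}'', so any self-contained derivation is already a different route. Your part (ii) is correct: Macaulay's unmixedness theorem in a Cohen--Macaulay ring is exactly the right tool, and both implications reduce to the dictionary between $\mathrm{ht}(I)$ and $\dim R/I$, which is available there because a connected Cohen--Macaulay affine algebra is equidimensional and catenary.

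The genuine gap is in your converse of part (i). To show that $g_i$ is a non-zero-divisor modulo $I_{i-1}$ you place a hypothetical zero-divisor $g_i$ inside an associated prime $P$ of $I_{i-1}$ and assert that $P$ has height $i-1$ ``by the graded analogue of unmixedness for regular sequences.'' No such analogue exists: unmixedness of ideals generated by regular sequences is a Cohen--Macaulay phenomenon, not a graded one, and an associated (embedded) prime of $I_{i-1}$ can have height larger than $i-1$ in a graded affine algebra --- already the zero ideal of $\k[X,Y]/(X^2,XY)$ has the embedded prime $(X,Y)$. Moreover, the identity $\dim R/I+\mathrm{ht}(I)=n$ that you use to pass between the two formulations of ``complete intersection'' requires $R$ to be equidimensional, which a graded affine algebra need not be. This is not a repairable presentation issue: for $R=\k[X,Y,Z]/(XZ,YZ)$ (graded, reduced, of dimension $2$) and $g_1=X$, the variety $V(g_1)$ is equidimensional of dimension $1=n-1$, yet $X\cdot Z=0$ with $Z\neq 0$ in $R$, so $g_1$ is not a regular sequence. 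The implication you are proving therefore needs a Cohen--Macaulay (or at least unmixedness) hypothesis, which is present in the intended applications (polynomial algebras) and in \cite{FutOvs05}, but which your argument never isolates. Two smaller points: the equality $\mathrm{ht}(I_i)=i$ for initial segments follows from the generalized Krull principal ideal theorem, not from Corollary \ref{regsubseq} (which concerns regular sequences and would be circular here); and the forward direction of (i) also uses the graded hypothesis, since $\dim R/(x)\geq\dim R-1$ can fail for a non-zero-divisor non-unit $x$ in a non-equidimensional affine algebra (take $x=Z-1$ in the same $R$), contrary to your remark that gradedness is essential only for the converse.
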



\begin{proof} 
Proposition ($2.1$) in \cite{FutOvs05}.

\end{proof}


\begin{proposition}
\label{projhyper}
Let $R=\k[X_1,X_2,\ldots,X_n]$ be a polynomial algebra, and let
$G_1,G_2,\ldots,G_t\in R$. The sequence 
$$X_1,X_2,\ldots,X_r,G_1,G_2,\ldots,G_t$$
is a complete intersection for $R$ if and only if the sequence
$g_1,g_2,\ldots,g_t$ is a complete intersection for
$\k[X_{r+1},X_{r+2},\ldots,X_{n}]$, where 
$$g_i(X_{r+1},X_{r+2},\ldots,X_{n})=G_i(0,0,\ldots,0,X_{r+1},X_{r+2},\ldots,X_{n}),\ \ \forall i=1,2,\ldots,t.$$
\end{proposition}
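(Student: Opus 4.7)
The plan is to reduce both complete intersection assertions to the geometric characterization of Proposition \ref{regvseq}(ii), after observing that the two sequences cut out the same variety.

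First I would set up the identification of the underlying varieties. The zero locus $V(X_1,\ldots,X_r)$ inside $\Sp R=\k^n$ is the linear subspace of points whose first $r$ coordinates vanish, and the obvious projection identifies it with $\Sp S=\k^{n-r}$, where $S=\k[X_{r+1},\ldots,X_n]$. Under this identification, the restriction of each $G_i$ to $V(X_1,\ldots,X_r)$ is, by definition, exactly $g_i$. Consequently
$$V(X_1,\ldots,X_r,G_1,\ldots,G_t)\subseteq\k^n \quad\text{and}\quad V(g_1,\ldots,g_t)\subseteq\k^{n-r}$$
are the same variety; in particular they share the same irreducible components and the same dimensions.

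Next I would invoke Proposition \ref{regvseq}(ii). Both $R$ and $S$ are polynomial algebras, hence Cohen--Macaulay, of Krull dimensions $n$ and $n-r$ respectively. Applied to the sequence $X_1,\ldots,X_r,G_1,\ldots,G_t$ of length $r+t$ in $R$, the proposition says that this sequence is a complete intersection if and only if its zero locus is equidimensional of dimension $n-(r+t)$. Applied to $g_1,\ldots,g_t$ of length $t$ in $S$, it is a complete intersection if and only if $V(g_1,\ldots,g_t)$ is equidimensional of dimension $(n-r)-t$. Since $n-(r+t)=(n-r)-t$ and the two varieties coincide, the equivalence of the two complete intersection properties is immediate.

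I do not anticipate any serious obstacle: the essential content is the trivial identification of the two zero loci with each other, and Proposition \ref{regvseq}(ii) handles the translation between ``complete intersection'' and ``equidimensional of the expected dimension'' on both sides. The only minor bookkeeping is to check that the numerical hypothesis $0\le r+t\le n$ required for the application on $R$ matches the hypothesis $0\le t\le n-r$ required on $S$, which is automatic.
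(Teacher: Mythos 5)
Your argument is correct. Note that the paper itself gives no proof here: it simply cites Lemma (2.2) of \cite{FutOvs05}, so your write-up is a genuine self-contained alternative. The key observation — that $V\left(X_1,\ldots,X_r,G_1,\ldots,G_t\right)\subset\k^n$ lies inside the coordinate subspace $\left\{X_1=\cdots=X_r=0\right\}$ and is carried isomorphically onto $V\left(g_1,\ldots,g_t\right)\subset\k^{n-r}$ by the projection onto the last $n-r$ coordinates — is exactly right, and since dimension and irreducible components are intrinsic, the two equidimensionality conditions with target dimensions $n-(r+t)=(n-r)-t$ are literally the same condition. Two small remarks. First, your route goes through Proposition \eqref{regvseq}(ii) and hence uses the Cohen--Macaulay property of polynomial rings; a slightly more economical argument works directly at the level of rings, observing that $R/(X_1,\ldots,X_r)\cong\k[X_{r+1},\ldots,X_n]$ carries the class of $G_i$ to $g_i$, so the two quotient rings $R/(X_1,\ldots,X_r,G_1,\ldots,G_t)$ and $\k[X_{r+1},\ldots,X_n]/(g_1,\ldots,g_t)$ are isomorphic and the dimension counts defining ``complete intersection'' on the two sides coincide; this avoids equidimensionality altogether. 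Second, Proposition \eqref{regvseq} is stated only for sequences of length at most the Krull dimension, so strictly speaking your argument covers only the case $r+t\leq n$; in the excluded case both sides fail for trivial reasons (a quotient cannot have negative dimension), so the equivalence still holds, but it is worth saying so explicitly rather than calling the bookkeeping automatic.
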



\begin{proof} 
Lemma ($2.2$) in \cite{FutOvs05}.

\end{proof}



\section{Ovsienko's Theorem and a weak version}


From now on fix some integer $n\geq 1$. 


\subsection{Gelfand-Tsetlin variety}
For $m\in\left\{1,2,\ldots,n\right\}$, let $\gl_m$ denote the gene\-ral linear Lie algebra of all $m\times m$ matrices over $\k$ with commutator pro\-duct and the
standard basis $\{E_{ij}\mid 1\leq i,j \leq m\}$ of matrix units and denote for $Z_m:=Z(U(\gl_m))$ the center of $U(\gl_m)$.

Clearly, $\gl_m$ for each $m\in\left\{1,2,\ldots,n-1\right\}$ is a Lie subalgebra of $\gl_{m+1}$ and $\gl_n$. Therefore, there exists a chain of Lie subalgebras of $\gl_n$
$$\gl_1\subset\gl_2\subset\cdots\subset\gl_{n-1}\subset\gl_n$$
and the induced chain of subalgebras of $U(\gl_n)$
$$U(\gl_1)\subset U(\gl_2)\subset\cdots\subset U(\gl_{n-1})\subset U(\gl_n).$$


\begin{definition}
The \textbf{Gelfand-Tsetlin subalgebra $\Gamma$ of} $U(\gl_n)$ is defined to be  the subalgebra $\Gamma$ of $U(\gl_n)$ generated by $\left\{Z_1,Z_2,\ldots,Z_n\right\}$.
\end{definition}


\begin{proposition}[\v Zelobenko, 1973]
For any $m\in\left\{1,2,\ldots,n\right\}$, the center $Z_m$ is a polynomial algebra in $m$ variables $\left\{\gamma_{mj}\ :\ j=1,2,\ldots,m\right\}$, with
 $$\gamma_{ij}=\sum_{t_1,t_2,\ldots,t_j\in \left\{1,2,\ldots,i\right\}}{E_{t_1t_2}E_{t_2t_3}\cdots E_{t_{j-1}t_j}E_{t_jt_1}}.$$
The subalgebra $\Gamma$ is a polynomial algebra in $\frac{n(n+1)}{2}$ variables $\left\{\gamma_{ij}\ :\ 1\leq j\leq i\leq n\right\}$.
\end{proposition}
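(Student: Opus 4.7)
I would split the statement into three parts and treat them in sequence.

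First, I would verify that each $\gamma_{ij}$ is central in $U(\gl_i)$ by a direct computation of $[E_{ab},\gamma_{ij}]$ for arbitrary $a,b\in\{1,\ldots,i\}$. Expanding via $[E_{ab},E_{cd}]=\delta_{bc}E_{ad}-\delta_{ad}E_{cb}$, the commutator of $E_{ab}$ with a cyclic monomial $E_{t_1t_2}E_{t_2t_3}\cdots E_{t_jt_1}$ produces a telescoping sum that cancels after summation over $(t_1,\ldots,t_j)\in\{1,\ldots,i\}^j$ and relabeling of dummy indices. This is the standard proof that traces of powers of the generic matrix of generators lie in the center; tedious but routine.

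Second, to show polynomiality of $Z_m=Z(U(\gl_m))$ with generators $\gamma_{m1},\ldots,\gamma_{mm}$, I would appeal to the Harish-Chandra isomorphism $Z_m\cong\k[h_1,\ldots,h_m]^{S_m}$ (with $h_a=E_{aa}$ and $S_m$ permuting the $h_a$), together with Chevalley's theorem that $\k[h_1,\ldots,h_m]^{S_m}$ is a polynomial algebra in $m$ generators, for instance the power sums $p_j=h_1^j+\cdots+h_m^j$. A direct computation shows that the Harish-Chandra image of $\gamma_{mj}$ equals $p_j$ modulo terms of lower polynomial degree, so $\gamma_{m1},\ldots,\gamma_{mm}$ are algebraically independent and generate $Z_m$.

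Third, for the algebraic independence of the $n(n+1)/2$ elements $\{\gamma_{ij}:1\leq j\leq i\leq n\}$ generating $\Gamma$, I would pass to the associated graded $\gr U(\gl_n)=S(\gl_n)=\k[x_{ab}:1\leq a,b\leq n]$ under the PBW filtration. It then suffices to show that the principal symbols $\bar\gamma_{ij}=\mathrm{tr}(X_i^j)$, with $X_i=(x_{ab})_{a,b\leq i}$, are algebraically independent in $S(\gl_n)$. I would establish this by constructing a specialization of the $x_{ab}$ whose Jacobian with respect to a carefully chosen subset of $n(n+1)/2$ variables is nondegenerate, exploiting the nested chain $\gl_1\subset\cdots\subset\gl_n$ so that each step $i-1\to i$ introduces exactly $i$ new independent invariants. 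The principal obstacle will be this last step: because $n(n+1)/2$ exceeds the rank $n$ of any single Cartan, one cannot reduce the question to a single application of Chevalley's theorem, and the specialization must genuinely detect the new information added at each level of the chain.
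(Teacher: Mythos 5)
The paper does not actually prove this proposition: its ``proof'' is a one-line citation to \v Zelobenko's book (page 169), so any genuine argument you give is necessarily a different route. Your three-part plan is the standard one and is essentially sound. Part (1), the centrality of $\gamma_{ij}=\tr(E_i^j)$, is indeed a routine telescoping computation with $[E_{ab},E_{cd}]=\delta_{bc}E_{ad}-\delta_{ad}E_{cb}$. In part (2) there is a small imprecision worth flagging: the Harish-Chandra isomorphism identifies $Z_m$ with $S_m$-invariants only after the $\rho$-shift, i.e.\ with symmetric polynomials in the shifted variables $l_a=h_a+m-a$ (or any equivalent normalization), not in the $h_a$ themselves; this does not affect your argument, since the leading term of the image of $\gamma_{mj}$ is still the $j$-th power sum in the $l_a$, and the triangularity plus algebraic independence of power sums in characteristic zero gives the conclusion. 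Part (3) is where the real content lies, and you correctly identify it as such, but you leave the key specialization unconstructed. It can be completed as follows: algebraic independence of the symbols $\overline{\gamma}_{ij}=\tr(X_i^j)$ is equivalent to dominance of the map $M_n\to\k^{n(n+1)/2}$ sending a matrix to the coefficients of the characteristic polynomials of all its principal submatrices --- this is exactly the Kostant--Wallach map $\Phi$ that the paper itself recalls in its Section 3.3 --- and dominance follows by induction on $i$: given $X_{i-1}$ with distinct eigenvalues, the $i$-th bordering row, column and diagonal entry provide enough free parameters to realize any prescribed monic characteristic polynomial for $X_i$. With that supplement your outline is a complete and correct proof, and arguably more self-contained than the paper's bare citation.
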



\begin{proof}
 See \cite{Zelobenko} page $169$.

\end{proof}


\begin{remark}
\label{contatraco}
Consider the matrix 

$$E:=\left(\begin{matrix}
 E_{11} & E_{12} & \cdots & E_{1n}\\
 E_{21} & E_{22} & \cdots & E_{2n}\\
 \vdots & \vdots & \ddots & \vdots\\
 E_{n1} & E_{n2} & \cdots & E_{nn}
\end{matrix}\right)\in M_n\left(U(\gl_n)\right)$$
and for each  $i\in\left\{1,2,\ldots,n\right\}$ the $i-$th principal submatrix

$$E_i:=\left(\begin{matrix}
 E_{11} & E_{12} & \cdots & E_{1i}\\
 E_{21} & E_{22} & \cdots & E_{2i}\\
 \vdots & \vdots & \ddots & \vdots\\
 E_{i1} & E_{i2} & \cdots & E_{ii}
\end{matrix}\right)\in M_i\left(U(\gl_i)\right)$$ 
Therefore, for $1\leq j\leq i\leq n$
$$\gamma_{ij}=\tr(E_i^j)=\sum_{t_1=1}^{i}\sum_{t_2=1}^{i}\cdots\sum_{t_j=1}^{i}{E_{t_1t_2}E_{t_2t_3}\cdots E_{t_{j-1}t_{j}}E_{t_jt_1}}$$

\end{remark}


\begin{remark}
 For others generators of the Gelfand-Tsetlin subalgebra $\Gamma$ see \cite{Gelfandetal} or \cite{Molevbook} (pages $246-250$).
\end{remark}



Clearly, the element $\gamma_{ij}$ can be viewed as a polynomial in noncommutative variables $E_{ij}$. But, by the Poincare-Birkhoff-Witt theo\-rem the graded algebra $\gr(U(\gl_n))$ is a polynomial algebra in variables 
$\overline{E}_{ij}$ with $i,j=1,2,\ldots ,n$ 
 $$\gr(U(\gl(n)))\cong\k\left[\overline{E}_{ij} : i,j=1,2,\ldots ,n\right]$$  

Therefore, the elements $\overline{\gamma}_{ij}$ are polynomials in commutative variables $\overline{E}_{ij}$ and considering the notation $X_{ij}:=\overline{E}_{ij}$ for $i,j=1,2,\ldots ,n$ then  
 $$\overline{\gamma}_{ij}=\sum_{t_1,t_2,\ldots,t_j\in \left\{1,2,\ldots,i\right\}}{X_{t_1t_2}X_{t_2t_3}\cdots X_{t_{j-1}t_j}X_{t_jt_1}}.$$


\begin{definition}
The \textbf{Gelfand-Tsetlin variety for} $\gl_n$ is the algebraic variety 
$$V\left(\left\{\overline{\gamma}_{ij} : i=1,2,\ldots ,n;\ j=1,2,\ldots ,i \right\}\right)\subset \k^{n^2}$$
\end{definition}
 
 
\begin{theorem}[\textbf{Ovsienko's Theorem}, $2003$]
\label{tmaOvs}
\noindent

The Gelfand-Tsetlin variety for $\gl_n$ is equidimensional of dimension $\frac{n(n-1)}{2}$.
\end{theorem}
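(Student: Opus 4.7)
The plan is to reduce equidimensionality to showing that the $\frac{n(n+1)}{2}$ polynomials $\overline{\gamma}_{ij}$ form a regular sequence in the polynomial ring $R=\k[X_{ij}\mid 1\leq i,j\leq n]$. Since $R$ is Cohen-Macaulay of Krull dimension $n^2$ and each $\overline{\gamma}_{ij}$ is homogeneous of positive degree $j$, Proposition \eqref{regvseq} will then yield equidimensionality in dimension $n^2-\frac{n(n+1)}{2}=\frac{n(n-1)}{2}$. Observe the geometric reformulation $\overline{\gamma}_{ij}=\tr((X^{(i)})^j)$, where $X^{(i)}$ is the top-left $i\times i$ principal submatrix of $X=(X_{ij})$; hence the Gelfand-Tsetlin variety equals $\{X\in\k^{n^2} : X^{(i)}\text{ is nilpotent for each } i=1,\ldots,n\}$.

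I would proceed by induction on $n$, with $n=1$ trivial. For the inductive step, order the generators into an ``old'' block of the $\frac{(n-1)n}{2}$ polynomials $\overline{\gamma}_{ij}$ with $i\leq n-1$ (which only involve the variables of the top-left $(n-1)\times(n-1)$ submatrix), followed by the ``new'' block $\overline{\gamma}_{n,1},\ldots,\overline{\gamma}_{n,n}$. By the induction hypothesis and Proposition \eqref{regvseq}, the old block is a regular sequence in the subring $S:=\k[X_{ij}\mid i,j\leq n-1]$; since $R$ is a polynomial extension of $S$, hence flat, the old block remains regular in $R$.

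It then remains to prove that $\overline{\gamma}_{n,1},\ldots,\overline{\gamma}_{n,n}$ form a regular sequence in the quotient by the old block; equivalently, that the projection $\pi\colon X\mapsto X^{(n-1)}$ sends the $\gl_n$ Gelfand-Tsetlin variety surjectively onto the $\gl_{n-1}$ one with every fiber of pure dimension $n-1$. The fiber over $A$ consists of bordered matrices $\bigl(\begin{smallmatrix}A & b\\ c^T & d\end{smallmatrix}\bigr)$ with $(b,c,d)$ ranging over a $(2n-1)$-dimensional affine space, cut out by the $n$ nilpotency equations of the bordered matrix. Granting fibers of pure dimension $n-1$, the inductive equidimensionality of the base and constancy of fiber dimension combine to give equidimensionality of the total space in dimension $\frac{(n-1)(n-2)}{2}+(n-1)=\frac{n(n-1)}{2}$.

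The hard part is precisely this fiber-dimension claim: one must rule out jumps over degenerate $A$'s, for example $A=0$, where upper semicontinuity alone is not enough. My strategy would be to expand the nilpotency equations via the Newton identities, expressing each $\overline{\gamma}_{n,k}$ as a polynomial in $d$ and the ``moments'' $c^T A^\ell b$, then exhibit a triangular change of variables in $(b,c,d)$ that exposes $n$ leading terms, so that Proposition \eqref{projhyper} reduces fiber regularity to a strictly smaller instance amenable to further induction. It is precisely in this combinatorial/algebraic step that Ovsienko's original argument does the real work.
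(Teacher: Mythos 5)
Your overall framework (reduce equidimensionality to regularity of the homogeneous sequence $\overline{\gamma}_{ij}$ via Proposition \eqref{regvseq}, then induct on $n$ with the ``old'' block handled by flatness of the polynomial extension) is the correct and standard setup; note, though, that the paper does not reprove Theorem \eqref{tmaOvs} at all --- its proof is the citation to \cite{Ovs} --- so there is no internal argument to match yours against. The substantive problem is that your key reduction is not merely left unproved but is false as stated: the projection $\pi(X)=X^{(n-1)}$ does \emph{not} have all fibers of pure dimension $n-1$. Over $A=0$ the bordered matrix $\bigl(\begin{smallmatrix}0&b\\ c^{T}&d\end{smallmatrix}\bigr)$ has characteristic polynomial $\lambda^{n-2}\left(\lambda^{2}-d\lambda-c^{T}b\right)$, so that fiber is $V\left(d,\,c^{T}b\right)\subset\k^{2n-1}$, irreducible of dimension $2n-3>n-1$ for $n\geq 3$. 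Concretely, for $n=3$ the component $C_3$ of Proposition \eqref{Y1(3)} lies entirely over $A=0$ and has dimension $3=2n-3$. Hence equidimensionality of the total space cannot be deduced from ``equidimensional base plus constant fiber dimension'': it holds only because the locus of jumping fibers is small enough to compensate for the excess, and controlling this trade-off over all degenerate strata is exactly the content of Ovsienko's proof. (Regularity of $\overline{\gamma}_{n1},\ldots,\overline{\gamma}_{nn}$ modulo the old block is equivalent, by Proposition \eqref{regvseq}, to a dimension statement about the \emph{total} preimage, not about individual fibers.)

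Beyond this, the step you yourself flag as ``the hard part'' --- the Newton-identity expansion and the triangular change of variables that would make Proposition \eqref{projhyper} applicable --- is only described, never exhibited, so even after repairing the fiber claim the proposal remains a strategy outline rather than a proof. A viable repair would abandon the fiber-by-fiber analysis and argue directly on the total variety, in the spirit of the weak version (Theorem \eqref{fracagln}): set a suitable collection of variables to zero, reduce by Propositions \eqref{regsubseq}, \eqref{regvseq} and \eqref{projhyper} to a smaller regular sequence, and induct there. That is where the real combinatorial work of \cite{Ovs} lives.
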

 
 
\begin{proof}
 See \cite{Ovs}.
 
\end{proof}


\subsection{Weak version of the Ovsienko's theorem}


\begin{theorem}[\textbf{Weak version of the Ovsienko's theorem}]
\label{fracagln}
\noindent

The variety $V_n:=V\left(\sigma_{n2},\sigma_{n3},\sigma_{n4},\ldots,\sigma_{nn}\right)\subset\k^{\frac{(n+2)(n-1)}{2}}$ is equidimensional of dimensão $\dim\left(V_n\right)=\frac{n(n-1)}{2}$, where
 $$\sigma_{nj}=\sum_{n>t_1>t_2>\cdots>t_{j-1}\geq 1}{X_{nt_1}X_{t_1t_2}\cdots X_{t_{j-2}t_{j-1}}X_{t_{j-1}n}};\ \ \ j=2,3,\ldots,n.$$
\end{theorem}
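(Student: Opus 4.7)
The plan is to reformulate the equidimensionality claim and proceed by induction on $n$. By Proposition \ref{regvseq}, since the polynomial ring $R_n$ on the $\frac{(n+2)(n-1)}{2}$ variables listed is graded and each $\sigma_{nj}$ is homogeneous of degree $j$, the assertion that $V_n$ is equidimensional of dimension $\frac{n(n-1)}{2}$ is equivalent to $\sigma_{n2},\ldots,\sigma_{nn}$ being a regular sequence (equivalently, a complete intersection) in $R_n$. The base cases $n\leq 2$ are immediate; for instance $\sigma_{22}=X_{12}X_{21}$ cuts out the union of two coordinate lines in $\k^2$.

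The structural input that drives the induction is the observation that
\[
\sigma_{nn}=X_{n,n-1}X_{n-1,n-2}\cdots X_{2,1}X_{1,n}
\]
is a single monomial, hence a product of $n$ distinct linear forms $L_1,\ldots,L_n$. Therefore $V(\sigma_{nn})$ is the union of the $n$ coordinate hyperplanes $H_i=\{L_i=0\}$, and the variety decomposes as $V_n=\bigcup_{i=1}^{n}\bigl(V_n\cap H_i\bigr)$. It suffices to prove that each slice $V_n\cap H_i$ is equidimensional of dimension $\frac{n(n-1)}{2}$.

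I would first treat the two ``boundary'' slices $L\in\{X_{1,n},X_{n,n-1}\}$. Setting $L=0$ has the pleasant effect that an entire batch of additional variables disappears from every surviving polynomial $\sigma_{nj}|_{L=0}$ with $j<n$: for $L=X_{1,n}$ these are $X_{n,1}$ together with $X_{j,1}$ for $j=2,\ldots,n-1$, and the situation is symmetric for $L=X_{n,n-1}$. After a relabelling of indices the restricted system $\sigma_{n2}|_{L=0},\ldots,\sigma_{n,n-1}|_{L=0}$ matches term for term the weak-version system $\sigma_{n-1,2},\ldots,\sigma_{n-1,n-1}$ associated with $V_{n-1}$, so that the slice is isomorphic to $V_{n-1}\times\k^{n-1}$. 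The inductive hypothesis gives $V_{n-1}$ equidimensional of dimension $\frac{(n-1)(n-2)}{2}$, so the slice is equidimensional of dimension $\frac{(n-1)(n-2)}{2}+(n-1)=\frac{n(n-1)}{2}$, as required.

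The main obstacle is the ``interior'' case $L=X_{k,k-1}$ with $2\leq k\leq n-1$: here $\sigma_{nn}|_{L=0}\equiv 0$, but no global renaming identifies the remaining system with a $V_{n-1}$-system. The plan is to iterate the hyperplane decomposition. Exactly two decreasing subsets of $\{1,\ldots,n-1\}$ of size $n-2$ survive in $\sigma_{n,n-1}|_{L=0}$, so this polynomial factors as a product of the linear forms shared by both surviving paths times an irreducible binomial supported on four ``local'' variables around the index $k$. Splitting the interior slice along this factorisation, each sub-slice should reduce, via repeated application of Proposition \ref{projhyper} together with the inductive hypothesis (possibly invoked on a weak-version system indexed by a smaller set), to dimension exactly $\frac{n(n-1)}{2}$. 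The delicate step is the combinatorial bookkeeping of which paths survive in each sub-slice and the verification that codimensions add correctly piece by piece.
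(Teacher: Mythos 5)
The paper itself contains no argument for this statement (its ``proof'' is a citation to Theorem (3.9) of \cite{gbm}), so your proposal must stand on its own. Its global scaffolding is sound: $\sigma_{nn}=X_{n,n-1}X_{n-1,n-2}\cdots X_{2,1}X_{1,n}$ is indeed a single squarefree monomial, so every irreducible component of $V_n$ lies in one of the $n$ coordinate hyperplanes, and it suffices to show each slice is equidimensional of dimension $\frac{n(n-1)}{2}$; the lower bound on dimensions is automatic from Krull's principal ideal theorem. Your treatment of the two boundary slices is correct and essentially complete: setting $X_{1,n}=0$ (resp.\ $X_{n,n-1}=0$) removes every path through the index $1$ (resp.\ $n-1$), the surviving system is the $V_{n-1}$-system after relabelling, and the $n-1$ variables that drop out of all equations contribute a factor $\k^{n-1}$, giving $\frac{(n-1)(n-2)}{2}+(n-1)=\frac{n(n-1)}{2}$.

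The genuine gap is the interior slice $X_{k,k-1}=0$ with $2\le k\le n-1$, which is where the whole difficulty of the theorem is concentrated, and there your text stops at a plan. Two concrete obstacles are left unresolved. First, after the factorisation $\sigma_{n,n-1}|_{X_{k,k-1}=0}=M\cdot\bigl(X_{k+1,k-1}X_{k-1,k-2}+X_{k+1,k}X_{k,k-2}\bigr)$ (with the obvious modifications when $k=2$ or $k=n-1$), the sub-slice along the binomial is cut out by an irreducible quadric that is \emph{not} a coordinate subspace; Proposition \eqref{projhyper} only eliminates variables set equal to zero, and the inductive hypothesis is a statement about the specific system $\sigma_{n-1,2},\ldots,\sigma_{n-1,n-1}$, so neither tool says anything about $\sigma_{n2},\ldots,\sigma_{n,n-2}$ restricted to that quadric --- a separate nonzerodivisor or dimension argument is needed and is not supplied. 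Second, on the sub-slices along the linear factors of $M$ two non-adjacent edges of the chain have been killed, and the monomials of $\sigma_{n,n-2}$ surviving both conditions no longer share a common monomial factor in general, so the ``factor and split'' step does not iterate in the clean form described; the recursion tree branches, and verifying at every node that the number of imposed conditions equals the codimension is exactly the assertion being proved. Until that bookkeeping is carried out (your closing sentence concedes it is not), the argument is complete only for $n\le 3$ and for the boundary slices in general.
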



\begin{proof}
Theorem (3.9) in \cite{gbm}. 

\end{proof}


\begin{remark}
Note that:
\begin{enumerate}[a.]
  \item $V_2$ is exactly the
Gelfand-Tsetlin variety for $\gl_2$ and the Gelfand-Tsetlin variety for $\gl_3$ is the union between $V_3$ and other subvariety which isomorphic to $V_3$. The name \textit{Weak version} is because $V_n$ is a subvariety of the Gelfand-Tsetlin variety.

\item All regular subvarieties of Gelfand-Tsetlin variety for $\gl_n$ are $V\left(f_1,f_2,\ldots,f_t\right)\subset\k^{n^2}$, where each polynomial is a variable, that is, for each $i$
$$f_i=X_{rs}\ \ ,\ \ \mbox{for some}\ \ 1\leq r,s\leq n.$$
Without loss of generality, we will assume that  $f_i\neq f_j\ ,\ \forall i\neq j.$
\end{enumerate}

\end{remark}


Independently, B. Kostant and N. Wallach in \cite{KW-1} and \cite{KW-2} guarantee that, all the regular components of the Gelfand-Tsetlin variety for $\gl_n$ are equidimensionals with dimension $\frac{n(n-1)}{2}$. But, that result is a corollary of the weak version:


\begin{corollary}
\label{corVFraca}
All the regular components of the Gelfand-Tsetlin variety are isomorphics. In particular, these regular components and 
$$V_{\leq}:=V\left(\left\{X_{ij}:1\leq i\leq j\leq n\right\}\right)$$
are isomorphics.
\end{corollary}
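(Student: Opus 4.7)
The plan is to combine the combinatorial description of regular subvarieties provided by Remark~(b) above with the equidimensionality statement of Ovsienko's Theorem~\ref{tmaOvs}. By Remark~(b), every regular component $W$ of the Gelfand-Tsetlin variety for $\gl_n$ is a coordinate subspace of $\k^{n^2}$; more precisely,
\[
W=V\bigl(\{X_{rs}:(r,s)\in S\}\bigr)
\]
for some set $S$ of distinct index pairs. Since Ovsienko's Theorem guarantees that every irreducible component of the Gelfand-Tsetlin variety has dimension $\frac{n(n-1)}{2}$, and $\dim W=n^2-|S|$, one is forced to have $|S|=\frac{n(n+1)}{2}$. The projection onto the coordinates indexed by the complement of $S$ then provides an isomorphism $W\cong\k^{n(n-1)/2}$.

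Next I would verify that $V_{\leq}$ is itself a regular component. The index set $\{(i,j):1\leq i\leq j\leq n\}$ has exactly $\frac{n(n+1)}{2}$ elements, so $V_{\leq}\cong\k^{n(n-1)/2}$. To see that $V_{\leq}$ lies inside the Gelfand-Tsetlin variety, observe that on this subspace the matrix of coordinates $(X_{ij})$ is strictly lower triangular. Consequently every principal submatrix $E_m$ is strictly lower triangular, and so is every power $E_m^j$ for $j\geq 1$. In particular, $\overline{\gamma}_{mj}=\tr(E_m^j)=0$ on $V_{\leq}$ for every $1\leq j\leq m\leq n$, so $V_{\leq}$ is contained in the Gelfand-Tsetlin variety. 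Being irreducible of the maximum dimension $\frac{n(n-1)}{2}$, it is an irreducible component.

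Putting the two observations together yields the corollary: every regular component is isomorphic to $\k^{n(n-1)/2}$, and so is $V_{\leq}$; hence all regular components are pairwise isomorphic, and each is isomorphic to $V_{\leq}$. There is no substantial obstacle here, since Remark~(b) and Ovsienko's Theorem already do the real work; the only elementary verification required is the vanishing of traces of powers of strictly lower triangular matrices, which is immediate.
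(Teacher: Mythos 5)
Your argument is correct. Remark (b) tells you each regular component $W$ is a coordinate subspace $V(\{X_{rs}:(r,s)\in S\})$, Ovsienko's Theorem \ref{tmaOvs} then forces $|S|=\frac{n(n+1)}{2}$, and projection onto the complementary coordinates gives $W\cong\k^{n(n-1)/2}$; your verification that $V_{\leq}$ is itself a regular component (it lies in the variety because strictly lower triangular matrices have strictly lower triangular, hence traceless, powers of all principal submatrices, and it is a component because an irreducible closed subset of a pure-dimensional variety having the full dimension must be a component) is also sound. The route is genuinely different from the paper's, however: the paper presents this corollary as a consequence of the \emph{weak version} (Theorem \ref{fracagln}) --- it introduces it precisely with the remark that the Kostant--Wallach equidimensionality of the regular components ``is a corollary of the weak version'' and then cites Corollary (3.12) of \cite{gbm} --- whereas you invoke the full strength of Ovsienko's Theorem. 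What your version buys is brevity and self-containedness within this paper; what the paper's version buys is economy of hypotheses, since the weak version concerns only the $n-1$ polynomials $\sigma_{n2},\dots,\sigma_{nn}$ and is substantially easier to establish than the equidimensionality of the whole Gelfand--Tsetlin variety. Both are legitimate, but note that if the point of the surrounding discussion is that the full Ovsienko theorem is \emph{not} needed to recover the Kostant--Wallach statement, your proof does not demonstrate that.
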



\begin{proof}
Corollary (3.12) in \cite{gbm}.

\end{proof}


\subsection{Kostant-Wallach map vs Gelfand-Tseltin varieties}

We can view the fiber in zero of the Kostant-Wallach map and its partial maps as Gelfand-Tsetlin varieties, the following form:  

\begin{definition}
\noindent
\begin{enumerate}
 \item The \textbf{Kostant-Wallach map} is the morphism given by 
$$\begin{array}{rccl}
 \Phi: & M_n(\mathbb{C}) & \longrightarrow & \mathbb{C}^{\frac{n(n+1)}{2}}\\
       & X               & \longmapsto     & \Phi(X):=\left(\chi_1(X),\chi_2(X),\ldots,\chi_n(X)\right).
 \end{array}.$$

\item For each $k=1,2,\dots,n$ the \textbf{$k$-partial Kostant-Wallach map} is the morphism 
 $$\Phi_k:=\pi_k\circ\Phi,$$ 
 where $\pi_k:\C\times\C^2\times\cdots\C^n\longrightarrow\C^{n-k+1}\times\cdots\times\C^{n-1}\times\C^{n}$ 
is the projection on the last $k$ factors.

 \end{enumerate}
 
\end{definition}


\begin{definition}
For $k=1,2,\ldots,n$ and $\beta\in\C^{n-k+1}\times\cdots\times\C^{n-1}\times\C^{n}$, we define the $k$-\textbf{partial Gelfand-Tsetlin variety in} $\beta$ as the
algebraic variety
$$\widetilde{V}_{\beta}^k=V\left(\left\{\overline{\gamma}_{ij}-\beta_{ij}:\ n-k+1\leq i \leq n\ \ \mbox{and}\ \ \ 1\leq j\leq i \right\}\right)\subset\C^{n^2}.$$

\end{definition}


\begin{remark}
Clearly $\Phi_n=\Phi$ and $\widetilde{V}_{0}^n$ is the Gelfand-Tsetlin variety for $\gl_n$.

\end{remark}


The relation between Kostant-Wallach map and Gelfand-Tseltin variety is determinated by the following proposition.


\begin{proposition}

\label{KWvsGTs}
For all $k=1,2,\dots,n$, the $k$-partial Gelfand-Tsetlin variety in zero  coincide with the fiber in zero of the $k$-partial Kostant-Wallach map, i.e., 
$$\widetilde{V}_{0}^k=\Phi_k^{-1}(0).$$
\end{proposition}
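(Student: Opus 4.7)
The plan is to reduce to a fiberwise statement: for any $X \in M_n(\mathbb{C})$ and any $i$ with $1 \leq i \leq n$, I would show
$$\chi_i(X) = 0 \quad \Longleftrightarrow \quad \overline{\gamma}_{ij}(X) = 0 \ \text{ for all } j = 1, 2, \ldots, i.$$
Once this equivalence is in hand for every $i$, conjoining it over $i = n-k+1, \ldots, n$ and unwinding the definitions of $\pi_k$ and $\widetilde{V}_0^k$ identifies $\Phi_k^{-1}(0)$ with $\widetilde{V}_0^k$.

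To establish the fiberwise equivalence, I would invoke Remark \ref{contatraco} transplanted to the commutative setting via the PBW isomorphism $\gr U(\gl_n) \cong \k[X_{ij}]$: the polynomial $\overline{\gamma}_{ij}$ evaluated at a matrix $X$ equals $\tr(X_i^j) = p_j(\lambda_1, \ldots, \lambda_i)$, where $X_i$ denotes the $i \times i$ principal submatrix of $X$ and $\lambda_1, \ldots, \lambda_i$ are its eigenvalues. The component $\chi_i(X)$ of the Kostant-Wallach map, on the other hand, packages the standard invariants of $X_i$, i.e.\ the coefficients of its characteristic polynomial, which are (up to sign) the elementary symmetric functions $e_1, \ldots, e_i$ in those same eigenvalues. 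So both systems of invariants are, up to a choice of algebraically independent generators of the ring of symmetric polynomials in $\lambda_1, \ldots, \lambda_i$, just functions of the unordered eigenvalue tuple of $X_i$.

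The key algebraic input is then Newton's identities: since the ground field has characteristic zero, the power sums $p_1, \ldots, p_i$ and the elementary symmetric functions $e_1, \ldots, e_i$ are polynomially expressible in each other. Hence the condition $p_1 = \cdots = p_i = 0$ is equivalent to $e_1 = \cdots = e_i = 0$, and both say exactly that $X_i$ is nilpotent. This yields $\chi_i(X) = 0 \iff \overline{\gamma}_{ij}(X) = 0$ for every $j = 1, \ldots, i$, as required.

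The main (and essentially only) delicate point is pinning down the convention for the generators $\chi_i$ used in the definition of $\Phi$. If one takes $\chi_i = (\overline{\gamma}_{i1}, \ldots, \overline{\gamma}_{ii})$, the proposition reduces to a direct unwinding of the two definitions and no appeal to Newton's identities is needed; under any other standard choice of $i$ algebraically independent invariants of the $i \times i$ principal submatrix (coefficients of the characteristic polynomial being the usual one) the short Newton-identity argument above closes the gap in characteristic zero.
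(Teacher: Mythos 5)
Your argument is correct: identifying $\overline{\gamma}_{ij}(X)$ with $\tr(X_i^j)$ via Remark \eqref{contatraco} and using Newton's identities in characteristic zero to pass between power sums and the coefficients of the characteristic polynomial of each principal submatrix $X_i$ reduces both conditions to nilpotency of $X_i$ for the relevant range of $i$, which is exactly the content of the result. The paper itself only cites Corollary (4.10) of \cite{gbm} for the proof, but the remark following the proposition (characterizing the variety as the set of strongly nilpotent matrices) makes clear that this is the same underlying argument, so your proposal fills in the standard details rather than taking a different route.
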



\begin{proof} Corollary (4.10) in \cite{gbm}.

\end{proof}


\begin{remark}
This proposition asserts that the Gelfand-Tsetlin variety coincide with the fiber in zero of the Kostant-Wallach map $\Phi^{-1}(0)$. As consequence of that fact, the Gelfand-Tsetlin variety for $\gl_n(\C)$ is exactly the set of the strongly nilpotent matrices $n\times n$ (where, a matrix $X\in M_n(\C)$ is say \textbf{strongly nilpotent}, when all its $i$-th principal submatrices $X_i$ are nilpotents).   
\end{remark}


\section{Restricted Yangians}

Let $p$ and $n$ be positive integers. The \textit{level} $p$ \textit{Yangian}  $Y_p(\gl_n)$ for the Lie algebra $\gl_n$ \cite{CHE, DRI} can be defined as the associative algebra over $\k$ with generators $t_{ij}^{(1)},t_{ij}^{(2)},\ldots,t_{ij}^{(p)}$, $i,j=1,2,\ldots,n$ subject to the relations 

\begin{equation*}
\label{genYan}
\left[T_{ij}(u),T_{kl}(v)\right]=\frac{1}{u-v}\left(T_{kj}(u)T_{il}(v)-T_{kj}(v)T_{il}(u)\right),
\end{equation*} 
where $u,v$ are formal variables and
$$T_{ij}(u)=\delta_{ij}u^p+\sum_{k=1}^p{t_{ij}^{(k)}u^{p-k}}\in Y_p(\gl_n)[u].$$
These relations are equivalent to the following equation

\begin{equation*}
\label{genYanequiv}
\left[t_{ij}^{(r)},t_{kl}^{(s)}\right]=\sum_{a=1}^{\min\{r,s\}}{\left(t_{kj}^{(a-1)}t_{il}^{(r+s-a)}-t_{kj}^{(r+s-a)}t_{il}^{(a-1)}\right)},
\end{equation*}
where $t_{ij}^{(0)}=\delta_{ij}$ and $t_{ij}^{(r)}=0$ for $r\geq p+1$.

Note that the level $1$ Yangian $Y_1(\gl_n)$ coincides with the universal enveloping algebra $U(\gl_n)$. Set
$$\deg(t_{ij}^{(k)})=k,\ \ \mbox{for}\ \ i,j=1,2,\ldots,n\ \ \mbox{and}\ \ k=1,2,\ldots, p.$$
This defines a natural filtration on Yangian $Y_p(\gl_n)$. The corresponding graded algebra will be denoted by $\overline{Y}_p(\gl_n)$ or $\gr(Y_p(\gl_n))$. Also, we have the following analogue of the Poincaré-Birkhoff-Witt Theorem for $Y_p(\gl_n)$.


\begin{proposition}[\textbf{PBW for Yangians}]
\label{PBWYan}
\noindent

The associated graded algebra $\overline{Y}_p(\gl_n)=\gr(Y_p(\gl_n))$ is a polynomial algebra in the variables $\overline{t}_{ij}^{(k)}$, $i,j=1,2,\ldots,n$ and $k=1,2,\ldots,p$.
\end{proposition}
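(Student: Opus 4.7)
The plan is to proceed in two stages: first establish that $\overline{Y}_p(\gl_n)$ is a commutative algebra generated by the classes $\overline{t}_{ij}^{(k)}$, then verify that no polynomial relations hold among these classes.

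For the first stage, I would analyze the degree of the defining commutation relation
\[
[t_{ij}^{(r)}, t_{kl}^{(s)}] = \sum_{a=1}^{\min\{r,s\}} \bigl(t_{kj}^{(a-1)} t_{il}^{(r+s-a)} - t_{kj}^{(r+s-a)} t_{il}^{(a-1)}\bigr).
\]
The left-hand side belongs to the filtration piece $F_{r+s}$, while every term on the right has degree $(a-1)+(r+s-a) = r+s-1$. Hence the commutator lies in $F_{r+s-1}$, so $[\overline{t}_{ij}^{(r)}, \overline{t}_{kl}^{(s)}] = 0$ in $\overline{Y}_p(\gl_n)$ and the associated graded algebra is commutative. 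Applying the same relations iteratively to reorder any product into a fixed ordering of the generators, the ordered monomials in $\{t_{ij}^{(k)} : 1 \leq i,j \leq n,\ 1 \leq k \leq p\}$ span $Y_p(\gl_n)$; passing to the associated graded, their images generate $\overline{Y}_p(\gl_n)$, yielding a surjection
\[
\phi : \k[x_{ij}^{(k)} : 1 \leq i,j \leq n,\ 1 \leq k \leq p] \twoheadrightarrow \overline{Y}_p(\gl_n), \qquad x_{ij}^{(k)} \mapsto \overline{t}_{ij}^{(k)}.
\]

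For the second stage, the natural route is through the full Yangian $Y(\gl_n)$, whose PBW theorem is classical (see Molev--Nazarov--Olshanskii \cite{MNO}) and identifies $\gr Y(\gl_n)$ with the polynomial algebra in countably many variables $\overline{t}_{ij}^{(k)}$, $k \geq 1$. Since $Y_p(\gl_n)$ is the quotient of $Y(\gl_n)$ by the filtration-stable two-sided ideal generated by the $t_{ij}^{(k)}$ with $k > p$, the induced map on graded algebras produces a surjection from $\k[\overline{t}_{ij}^{(k)} : 1 \leq k \leq p]$ onto $\overline{Y}_p(\gl_n)$ which factors $\phi$.

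The main obstacle is ruling out an additional graded relation: a priori the associated graded of a quotient ideal may strictly contain the ideal generated by the leading terms of the generators. I would resolve this by a Hilbert series comparison. The spanning argument of the first stage yields the upper bound $\dim F_m Y_p(\gl_n) \leq \dim F_m \k[x_{ij}^{(k)}]$ in every filtration degree, so it suffices to produce a matching lower bound. This can be extracted from the known faithful action of $Y_p(\gl_n)$ on tensor products of evaluation modules, which provides enough linearly independent operators to detect every ordered monomial. Equality of the Hilbert series in each degree then forces $\phi$ to be injective, so $\overline{Y}_p(\gl_n)$ is the polynomial algebra in the $pn^2$ variables $\overline{t}_{ij}^{(k)}$ as claimed.
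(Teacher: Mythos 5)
Your first stage is correct and complete: the degree count on the defining relations shows the commutator $[t_{ij}^{(r)},t_{kl}^{(s)}]$ drops filtration degree by one, so $\gr(Y_p(\gl_n))$ is commutative, and the ordered monomials span, giving the surjection $\phi$ from the polynomial algebra in $pn^2$ variables onto $\overline{Y}_p(\gl_n)$. (For what it is worth, the paper itself offers no argument at all here --- its ``proof'' is a citation to Cherednik and to Molev --- so any honest argument is already more than the paper provides.)

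The gap is in your second stage, and it sits exactly where you locate the ``main obstacle.'' You correctly observe that passing to the quotient of $Y(\gl_n)$ does not by itself control $\gr$ of the quotient, and that what is needed is a lower bound $\dim F_m\,Y_p(\gl_n)\geq\dim F_m\,\k[x_{ij}^{(k)}]$, i.e.\ linear independence of the ordered monomials in the $t_{ij}^{(k)}$, $k\leq p$. But the proposed source of that lower bound --- ``the known faithful action of $Y_p(\gl_n)$ on tensor products of evaluation modules'' --- is not an available off-the-shelf fact, and faithfulness is in any case the wrong property: a faithful representation preserves linear independence of elements already known to be independent, whereas what you must produce is a homomorphism under which the \emph{images} of the ordered monomials are linearly independent; establishing that is logically the same task as proving PBW, not a shortcut around it. The standard way to close this (the route taken in the references the paper cites) is to write down the explicit homomorphism $Y_p(\gl_n)\to U(\gl_n)^{\otimes p}$ obtained by composing the $(p-1)$-fold coproduct with evaluation maps, so that $T(u)u^{-p}$ goes to the matrix product $\left(1+E^{(1)}u^{-1}\right)\cdots\left(1+E^{(p)}u^{-1}\right)$; one checks this kills $t_{ij}^{(k)}$ for $k>p$ and hence factors through $Y_p(\gl_n)$, and then one computes the symbols of the images of the $t_{ij}^{(k)}$ in $S(\gl_n)^{\otimes p}$ and verifies that these $pn^2$ polynomials are algebraically independent. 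That symbol computation is the substantive content of the theorem, and it is the step your proposal asserts rather than performs; without it the Hilbert series comparison has no lower bound and injectivity of $\phi$ is not established.
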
 


\begin{proof}
See \cite{CHE} or also Theorem ($2.1$) in \cite{Mo}. 

\end{proof}


We note that the Proposition \eqref{PBWYan} show that $Y_p(\gl_n)$ is a special filtered algebra. Now, consider the matrix $T(u)=\left(T_{ij}(u)\right)_{i,j=1}^n$ and the element in $Y_p(\gl_n)[u]$, called \textbf{quantum determinant} for $Y_p(\gl_n)$ and defined by

$$\qdet T(u)=\sum_{\sigma\in S_n}{\textup{sgn}(\sigma)T_{1\sigma(1)}(u)T_{2\sigma(2)}(u-1)\cdots T_{n\sigma(n)}(u-n+1)}.$$


\begin{example}

The \textit{quantum determinant} for $Y_p(\gl_2)$ is
\begin{align*}
\qdet T(u)&=T_{11}(u)T_{22}(u-1)-T_{21}(u)T_{12}(u-1)\\
    &=T_{11}(u-1)T_{22}(u)-T_{12}(u-1)T_{21}(u)\\
    &=T_{22}(u)T_{11}(u-1)-T_{12}(u)T_{21}(u-1)\\
    &=T_{22}(u-1)T_{11}(u)-T_{21}(u-1)T_{12}(u).
\end{align*}
\end{example}


Is not difficult to see that $\qdet T(u)$ is a monic polynomial in $u$ of degree $np$
$$\textup{qdet}\ T(u)=u^{np}+d_{n1}u^{np-1}+d_{n2}u^{np-2}+\cdots+d_{n\,np-1}u+d_{n\,np},\ \ \ d_{ni}\in Y_p(\gl_n).$$

\vspace{0.1cm}

\begin{proposition}
\label{centerZ(Yp(n))}
The center $Z(Y_p(\gl_n))$ of $Y_p(\gl_n)$ is generated by coefficients $d_{ni}$ of the powers $u^{np-i}$ of the quantum determinant $\textup{qdet}\ T(u)$,  i.e., 
$$Z(Y_p(\gl_n))=\left\langle d_{n1},d_{n2},\ldots,d_{n\,np}\right\rangle.$$ 
\end{proposition}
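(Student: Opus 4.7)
The plan is to establish the two inclusions separately: first that each coefficient $d_{ni}$ lies in $Z(Y_p(\gl_n))$, and then conversely that every central element is a polynomial in the $d_{n1}, d_{n2}, \ldots, d_{n\,np}$.

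For the centrality direction, I would rely on the standard $R$-matrix calculation that proves $[\qdet T(u), T_{kl}(v)] = 0$ holds in the full Yangian. That identity is obtained by applying the antisymmetrization idempotent in $\mathrm{End}(\k^n)^{\otimes n}$ to the iterated $RTT$-relation, and it uses only the defining commutation relation stated before the proposition. Since the truncation conditions $t_{ij}^{(r)} = 0$ for $r \geq p+1$ are preserved by these manipulations (they are simply imposed on both sides), the identity descends to $Y_p(\gl_n)[u,v]$. Extracting coefficients in $u$ from $[\qdet T(u), T_{kl}(v)] = 0$ then shows that each $d_{ni}$ commutes with every generator $t_{kl}^{(s)}$, hence lies in $Z(Y_p(\gl_n))$.

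For the reverse inclusion, I would argue on the associated graded algebra. By Proposition \ref{PBWYan}, $\overline{Y}_p(\gl_n)$ is the polynomial algebra in the symbols $\overline{t}_{ij}^{(k)}$. Given a central $z \in Z(Y_p(\gl_n))$ of filtration degree $d$, its principal symbol $\overline{z}$ is Poisson-central in $\overline{Y}_p(\gl_n)$ with respect to the bracket induced by the filtered commutator. I would compute the symbols $\overline{d}_{ni}$ explicitly from the expansion of $\qdet T(u)$ and verify that they generate the Poisson center of $\overline{Y}_p(\gl_n)$. Then a standard induction on filtration degree applies: write $\overline{z}$ as a polynomial in the $\overline{d}_{ni}$, lift to a polynomial $P(d_{n1},\ldots,d_{n\,np}) \in Z(Y_p(\gl_n))$ of the same degree, and replace $z$ by $z - P$, which is central of strictly lower filtration degree. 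The induction terminates, exhibiting $z$ as a polynomial in the $d_{ni}$.

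The main obstacle is the generation step: identifying the Poisson center of $\overline{Y}_p(\gl_n)$ and verifying that the $np$ symbols $\overline{d}_{n1}, \ldots, \overline{d}_{n\,np}$ both lie inside it and actually exhaust it. The count is consistent, since $\qdet T(u)$ has degree $np$ in $u$, so there are exactly $np$ coefficients, matching the expected transcendence degree of the Poisson center (the classical limit should be an invariant ring under a $GL_n$-type action on a truncated current algebra, whose invariants are generated by the classical analogues of the coefficients of $\qdet T(u)$). This classical identification, together with algebraic independence of the $\overline{d}_{ni}$, is the technical core of the argument and where one invokes the analogous result for the full Yangian $Y(\gl_n)$ from \cite{MNO} and tracks how it behaves under the truncation defining $Y_p(\gl_n)$.
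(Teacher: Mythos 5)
The paper offers no proof of this proposition at all: it is quoted from the literature (Theorem 1 in \cite{CHE}, Corollary 4.1 in \cite{Mo}), so there is no in-paper argument to compare against. Your outline correctly reconstructs the architecture of the argument those references actually use: centrality of the $d_{ni}$ via the antisymmetrizer/$RTT$ computation in the full Yangian, which descends to $Y_p(\gl_n)$ because the restricted Yangian is the quotient of $Y(\gl_n)$ by the two-sided ideal generated by the $t_{ij}^{(r)}$ with $r>p$; and generation via passage to the associated graded algebra and induction on filtration degree. Both halves of that skeleton are sound.

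The difficulty lies in the step you yourself flag as the technical core, and there your proposed route would not go through as stated. Knowing that the center of the full Yangian $Y(\gl_n)$ is generated by the coefficients of $\qdet T(u)$ (the result of \cite{MNO}) and then ``tracking how it behaves under the truncation'' does not suffice: the center of a quotient algebra can be strictly larger than the image of the center, so no formal argument transports the full-Yangian statement down to $Y_p(\gl_n)$. What is genuinely required is the identification of the Poisson center of $\gr Y_p(\gl_n)\cong S\left(\gl_n\otimes\k[x]/(x^p)\right)$ with the invariant ring of this truncated current (Takiff) algebra, together with the theorem --- of Rais--Tauvel type, carried out for $\gl_n$ precisely in \cite{Mo} --- that this invariant ring is a polynomial algebra on exactly the $np$ coefficients of $\det X(u)$, i.e.\ on the symbols $\overline{d}_{n1},\ldots,\overline{d}_{n\,np}$, and that these are algebraically independent. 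Without that computation the induction on filtration degree has nothing to stand on. So your proposal is a correct frame around the theorem, but the one step you defer is not a routine verification: it is the substance of the result being cited.
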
 


\begin{proof}
See Theorem $(1)$ in \cite{CHE} or Corollary $(4.1)$ in \cite{Mo}.

\end{proof}


\vspace{0.2cm}

\noindent Now, consider the chain 
$$Y_p(\gl_1)\subset Y_p(\gl_2)\subset\cdots\subset Y_p(\gl_{n-1})\subset Y_p(\gl_n).$$
The \textbf{Gelfand-Tsetlin subalgebra} $\Gamma$ for Yangians $Y_p(\gl_n)$ is the subalgebra of $Y_p(\gl_n)$ generated by centers $Z(Y_p(\gl_i))$ of each subalgebra $Y_p(\gl_i)$ ($i=1,2,\ldots,n$). By Proposition 
\eqref{centerZ(Yp(n))}, the coefficients of the  quantum determinant 
$$\left\{d_{ij}\right\}_{\substack{
  i=1,2,\ldots,n \\
  j=1,2,\ldots,ip
 }}$$
generates $\Gamma$. Note that this algebra is commutative. 


\begin{remark}
\label{grdetT(u)}
For
$$F=\sum_{i}f_iu^i\in Y_p(\gl_n)[u],$$
we will use the following notation 
$$\overline{F}:=\sum_{i}\overline{f_i}u^i\in \overline{Y}_p(\gl_n)[u].$$ 
Also, denote 
$$X_{ij}^{(k)}:=\overline{t}_{ij}^{(k)},\ \ \ X_{ij}(u):=\overline{T}_{ij}(u),\ \ \ \mbox{and}\ \ \ X(u):=\left(X_{ij}(u)\right)_{i,j=1}^n.$$
Therefore, as $\ \overline{T_{ij}(u-\alpha)}=X_{ij}(u),\ \forall \alpha\in k,$ is not difficult to see
$$\gr\textup{qdet}\,T(u)=\det X(u).$$
\end{remark}


\vspace{0.5cm}

\noindent By PBW theorem for Yangians (Proposition \eqref{PBWYan}) each $\overline{d}_{ij}$ is a polynomial in the commutative variables $\overline{t}_{ij}^{(k)}$ with $i,j=1,2,\ldots,n$ and $k=1,2,\ldots,p$.


\begin{definition}
\label{GTsVar}
The \textbf{Gelfand-Tsetlin Variety} for $Y_p(\gl_n)$ 
is the algebraic variety  
$$\gts:=V\left(\left\{\overline{d_{ij}}\right\}_{\substack{
  i=1,2,\ldots,n \\
  j=1,2,\ldots,ip
 }}\right)\subset \k^{n^2p}.$$
\end{definition}


\begin{remark}
\label{Y1(n)gl1}
 By Corollary \eqref{KWvsGTs}, the Gelfand-Tsetlin variety $\gts$ for $Y_1(\gl_n)$ coincides with the Gelfand-Tsetlin variety for $\gl_n$. 
\end{remark}



\subsubsection{Some polynomials of the Gelfand-Tsetlin variety}

By Remark \eqref{grdetT(u)}
\begin{align*}
\textup{gr\,qdet}\,T(u) &=\det X(u)=\sum_{\sigma\in S_n}{\textup{sgn}(\sigma)X_{1\sigma(1)}(u)X_{2\sigma(2)}(u)\cdots X_{n\sigma(n)}(u)},\\
\textup{det}\, X(u)&=u^{np}+\overline{d}_{n1}u^{np-1}+\cdots+\overline{d}_{n\,np-1}u+\overline{d}_{n\,np},\ \ \ \overline{d}_{ni}\in \overline{Y}_p(\gl_n)[u]
\end{align*}
and 
$$X_{ij}(u)=\delta_{ij}u^p+\sum_{k=1}^p{X_{ij}^{(k)}u^{p-k}}\in \overline{Y}_p(\gl_n)[u],$$
with $X_{ij}^{(0)}=\delta_{ij}$ and $X_{ij}^{(r)}=0$ for $r\geq p+1$, we have  

$$\overline{d}_{ni}=\sum_{\sigma\in S_n}\sum_{t_1+t_2+\cdots+t_n=i}{\textup{sgn}(\sigma)X_{1\sigma(1)}^{(t_1)}X_{2\sigma(2)}^{(t_2)}\cdots X_{n\sigma(n)}^{(t_n)}}.$$


\begin{example} Some polynomials are 
\begin{align*}
\overline{d}_{n1}&=X_{11}^{(1)}+X_{22}^{(1)}+\cdots+X_{nn}^{(1)},\\
\overline{d}_{n2}&=\sum_{i=1}^{n}X_{ii}^{(2)}+\sum_{i=1}^{n-1}\sum_{j=i+1}^{n}\left(X_{ii}^{(1)}X_{jj}^{(1)}-X_{ij}^{(1)}X_{ji}^{(1)}\right).
\end{align*}

\end{example}

\begin{remark}
Another expression for the polynomials $\overline{d}_{ni}$ is the following: For any $i\in\left\{1,2,\ldots,np\right\}$ we consider the Young diagrams $\lambda=\left(\lambda_1,\lambda_2,\ldots,\lambda_r\right)$ with $r\leq n$ and $\lambda_{1}\leq p$, i.e., 
$\lambda_1,\lambda_2,\ldots,\lambda_r\in\Z_{>0}$ such that
  $$p\geq \lambda_1\geq \lambda_2\geq \ldots\geq \lambda_r>0,\ \ \ \lambda_1+\lambda_2+\ldots+\lambda_r=i\ \ \ \mbox{and}\ \ \ 1\leq r\leq n.$$
Denote the set of all this Young diagrams
$\lambda=\left(\lambda_1,\lambda_2,\ldots,\lambda_r\right)$ of $i$ with lenght $r$ and $\lambda_{1}\leq p$ by $\Omega_r(i)$. Hence, we can see that
$$\overline{d}_{ni}=\sum_{j=1}^{n}
\sum_{\substack{A\subset\left\{1,2,\ldots,n\right\} \\
  \left|A\right|=j\\
  \sigma\in S_{j}}}
\sum_{\substack{\lambda\in\Omega_j(i)\\
  \sigma'\in S_j}}\textup{sgn}(\sigma)X_{a_1a_{\sigma(1)}}^{(\lambda_{\sigma'(1)})}X_{a_2a_{\sigma(2)}}^{(\lambda_{\sigma'(2)})}\cdots X_{a_ja_{\sigma(j)}}^{(\lambda_{\sigma'(j)})},$$
where $A=\{a_1,a_2,\dots,a_j\}$ with $a_1<a_2<\cdots<a_j$, $\lambda=\left(\lambda_1,\lambda_2,\ldots,\lambda_r\right)$ and $S_j$ the permutations group of $\{1,2,\ldots,j\}$.
 
\end{remark}



\subsection{Gelfand-Tsetlin variety for Restricted Yangians}

We note that the Gel\-fand-Tsetlin variety $\gts$ for $Y_p(\gl_n)$ has polynomials
\begin{align*}
\overline{d}_{11}&=X_{11}^{(1)},\\
\overline{d}_{21}&=X_{11}^{(1)}+X_{22}^{(1)},\\
\overline{d}_{31}&=X_{11}^{(1)}+X_{22}^{(1)}+X_{33}^{(1)},\\
		 &\vdots\\
\overline{d}_{n1}&=X_{11}^{(1)}+X_{22}^{(1)}+\cdots+X_{nn}^{(1)},
\end{align*}
but, clearly
  $$V\left(\overline{d}_{11},\overline{d}_{21},\dots,\overline{d}_{n1}\right)=V\left(X_{11}^{(1)},X_{22}^{(1)},\dots,X_{nn}^{(1)}\right).$$
Therefore, we would like to improve the polynomials that determined the Gelfand-Tsetlin variety. With that aim, we will understand the polynomials for the case $n=3$ and we will can to note that for cases $n>3$ the combinatorics is not simple.


\begin{lemma}
\label{GTsPolyn10}
The polynomials that determine the Gelfand-Tsetlin variety $\gts$ for $Y_p(\gl_3)$ are:
\begin{align*}
\overline{d}_{1i}    &=X_{11}^{(i)},\ \ \ i=1,2,\dots,p,\\
\overline{d}_{21}    &=X_{11}^{(1)}+X_{22}^{(1)},\\
\overline{d}_{2i}    &=X_{11}^{(i)}+X_{22}^{(i)}+\sum_{t=1}^{i-1}\left(X_{11}^{(t)}X_{22}^{(i-t)}-X_{12}^{(t)}X_{21}^{(i-t)}\right),\ \ \ i=2,3,\ldots,p,\\
\overline{d}_{2\,p+i}&=\sum_{t=i}^{p}\left(X_{11}^{(t)}X_{22}^{(p+i-t)}-X_{12}^{(t)}X_{21}^{(p+i-t)}\right), \ \ \ i=1,2,3,\ldots,p,\\
\overline{d}_{31}    &=X_{11}^{(1)}+X_{22}^{(1)}+X_{33}^{(1)},\\
\overline{d}_{32}    &=X_{11}^{(2)}+X_{22}^{(2)}+X_{33}^{(2)}+X_{11}^{(1)}X_{22}^{(1)}+X_{11}^{(1)}X_{33}^{(1)}+X_{22}^{(1)}X_{33}^{(1)}-X_{23}^{(1)}X_{32}^{(1)}-\\
					 &\ \ \ -X_{12}^{(1)}X_{21}^{(1)}-X_{13}^{(1)}X_{31}^{(1)},\\
\overline{d}_{3i}    &=X_{33}^{(i)}+X_{22}^{(i)}+X_{11}^{(i)}+ \sum_{s=1}^{i-1}\left(X_{22}^{(s)}X_{33}^{(i-s)}-X_{23}^{(s)}X_{32}^{(i-s)}\right)+\\
                     &\ \ \ +\sum_{s=1}^{i-1}\left(X_{11}^{(s)}X_{33}^{(i-s)}-X_{13}^{(s)}X_{31}^{(i-s)}+X_{11}^{(s)}X_{22}^{(i-s)}-X_{12}^{(s)}X_{21}^{(i-s)}\right)+\\
                     &\ \ \ +\sum_{r=1}^{i-2}\sum_{s=1}^{i-r-1}\biggl\{X_{11}^{(r)}X_{22}^{(s)}X_{33}^{(i-r-s)}-X_{11}^{(r)}X_{23}^{(s)}X_{32}^{(i-r-s)}-X_{12}^{(r)}X_{21}^{(s)}X_{33}^{(i-r-s)}-\\
                     &\ \ \ -X_{13}^{(r)}X_{22}^{(s)}X_{31}^{(i-r-s)}+X_{12}^{(r)}X_{23}^{(s)}X_{31}^{(i-r-s)}+X_{13}^{(r)}X_{21}^{(s)}X_{32}^{(i-r-s)}\biggr\},\\
                     & \ \ \ \ \mbox{for}\ \ i=3,4,\ldots,p,
\end{align*}
\begin{align*}
\overline{d}_{3\,p+1}&=\sum_{s=1}^p \left(X_{22}^{(s)}X_{33}^{(p+1-s)}-X_{23}^{(s)}X_{32}^{(p+1-s)}+X_{11}^{(s)}X_{33}^{(p+1-s)}-X_{13}^{(s)}X_{31}^{(p+1-s)}\right)+\\
                     &\ \ \ +\sum_{s=1}^p\left(X_{11}^{(s)}X_{22}^{(p+1-s)}-X_{12}^{(s)}X_{21}^{(p+1-s)}\right)+\\
                     &\ \ \ +\sum_{r=1}^{p-1}\sum_{s=1}^{p-r}\biggl\{X_{11}^{(r)}X_{22}^{(s)}X_{33}^{(p+1-r-s)}-X_{11}^{(r)}X_{23}^{(s)}X_{32}^{(p+1-r-s)}-\\
                     &\ \ \ -X_{12}^{(r)}X_{21}^{(s)}X_{33}^{(p+1-r-s)}-X_{13}^{(r)}X_{22}^{(s)}X_{31}^{(p+1-r-s)}+X_{12}^{(r)}X_{23}^{(s)}X_{31}^{(p+1-r-s)}+\\
                     &\ \ \ +X_{13}^{(r)}X_{21}^{(s)}X_{32}^{(p+1-r-s)}\biggr\},\\
\overline{d}_{3\,p+i}&= \sum_{s=i}^p\left(X_{22}^{(s)}X_{33}^{(p+i-s)}-X_{23}^{(s)}X_{32}^{(p+i-s)}+X_{11}^{(s)}X_{33}^{(p+i-s)}\right)+\\
                     &\ \ \ +\sum_{s=i}^p\left(-X_{13}^{(s)}X_{31}^{(p+i-s)}+X_{11}^{(s)}X_{22}^{(p+i-s)}-X_{12}^{(s)}X_{21}^{(p+i-s)}\right)+\\
                     &\ \ \ +\sum_{r=1}^{i-1}\sum_{s=i-r}^p\biggl\{X_{11}^{(r)}X_{22}^{(s)}X_{33}^{(p+i-r-s)}-X_{11}^{(r)}X_{23}^{(s)}X_{32}^{(p+i-r-s)}-\\
                     &\ \ \ -X_{12}^{(r)}X_{21}^{(s)}X_{33}^{(p+i-r-s)}-X_{13}^{(r)}X_{22}^{(s)}X_{31}^{(p+i-r-s)}+X_{12}^{(r)}X_{23}^{(s)}X_{31}^{(p+i-r-s)}+\\
                     &\ \ \ +X_{13}^{(r)}X_{21}^{(s)}X_{32}^{(p+i-r-s)}\biggr\}+\\
                     &\ \ \ +\sum_{r=i}^{p}\sum_{s=1}^{p+i-r-1}\biggl\{X_{11}^{(r)}X_{22}^{(s)}X_{33}^{(p+i-r-s)}-X_{11}^{(r)}X_{23}^{(s)}X_{32}^{(p+i-r-s)}-\\
                     &\ \ \ -X_{12}^{(r)}X_{21}^{(s)}X_{33}^{(p+i-r-s)}-X_{13}^{(r)}X_{22}^{(s)}X_{31}^{(p+i-r-s)}+X_{12}^{(r)}X_{23}^{(s)}X_{31}^{(p+i-r-s)}+\\
                     &\ \ \ +X_{13}^{(r)}X_{21}^{(s)}X_{32}^{(p+i-r-s)}\biggr\},\ \ i=2,3,\ldots,p,\\
\overline{d}_{3\,2p+i}&=\sum_{r=i}^{p}\sum_{s=p+i-r}^{p}\biggl\{X_{11}^{(r)}X_{22}^{(s)}X_{33}^{(2p+i-r-s)}-X_{11}^{(r)}X_{23}^{(s)}X_{32}^{(2p+i-r-s)}-\\
					  &\ \ \ -X_{12}^{(r)}X_{21}^{(s)}X_{33}^{(2p+i-r-s)}-X_{13}^{(r)}X_{22}^{(s)}X_{31}^{(2p+i-r-s)}+\\
					  &\ \ \ +X_{12}^{(r)}X_{23}^{(s)}X_{31}^{(2p+i-r-s)}+X_{13}^{(r)}X_{21}^{(s)}X_{32}^{(2p+i-r-s)}\biggr\},\ \ i=1,2,\ldots,p.
\end{align*}
\end{lemma}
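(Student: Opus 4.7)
My plan is to compute $\overline{d}_{ni}$ directly as the coefficient of $u^{np-i}$ in the polynomial $\det X(u) = \gr\,\qdet\,T(u)$ of Remark \ref{grdetT(u)}. Expanding the determinant via the Leibniz formula and substituting $X_{ij}(u) = \delta_{ij} u^p + \sum_{k=1}^p X_{ij}^{(k)} u^{p-k}$ with the conventions $X_{ij}^{(0)} = \delta_{ij}$ and $X_{ij}^{(r)} = 0$ for $r > p$, I obtain
$$\overline{d}_{ni} = \sum_{\sigma \in S_n} \textup{sgn}(\sigma) \sum_{\substack{t_1 + \cdots + t_n = i \\ 0 \leq t_k \leq p}} X_{1\sigma(1)}^{(t_1)} X_{2\sigma(2)}^{(t_2)} \cdots X_{n\sigma(n)}^{(t_n)},$$
where a summand vanishes whenever $t_k = 0$ and $\sigma(k) \neq k$. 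The proof is then a matter of carrying out this sum for $n \in \{1, 2, 3\}$ and reorganizing by the ranges of $i$.

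For $n = 1$ only $\sigma = \textup{id}$ contributes, immediately yielding $\overline{d}_{1i} = X_{11}^{(i)}$. For $n = 2$ the two permutations give $X_{11}^{(t_1)} X_{22}^{(t_2)}$ and $-X_{12}^{(t_1)} X_{21}^{(t_2)}$; splitting the range at $p$ (below which $t_k \leq p$ is vacuous, above which it forces $t_k \geq i - p$) produces the claimed formulas for $\overline{d}_{2i}$ and $\overline{d}_{2,p+i}$.

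For $n = 3$, I expand over the six elements of $S_3$ grouped by cycle type (identity, three transpositions, two $3$-cycles) and further stratify each summand by how many of the exponents $t_k$ equal zero; this yields linear, quadratic, and cubic blocks matching the shape of the statement. The three ranges $1 \leq i \leq p$, $p+1 \leq i \leq 2p$, and $2p+1 \leq i \leq 3p$ are handled separately, since linear terms only occur for $i \leq p$ and quadratic terms disappear once $i > 2p$ (both forcing the corresponding subsums over the truncated simplex to be empty).

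The main obstacle is the bookkeeping of the summation ranges for the cubic block in $\overline{d}_{3,p+i}$: fixing $t_1 = r$ and $t_2 = s$, the constraint $t_3 = p + i - r - s \leq p$ forces $s \geq i - r$ when $r \leq i - 1$ but becomes automatic when $r \geq i$, which is precisely what splits the cubic sum into the two double sums $\sum_{r=1}^{i-1}\sum_{s=i-r}^{p}$ and $\sum_{r=i}^{p}\sum_{s=1}^{p+i-r-1}$ appearing in the statement. An analogous but simpler analysis, using the forced lower bound $t_k \geq i$ on every variable, gives the single double sum $\sum_{r=i}^{p}\sum_{s=p+i-r}^{p}$ for $\overline{d}_{3,2p+i}$. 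Apart from verifying that these ranges exactly enumerate the lattice points of the truncated simplex $\{t_1 + t_2 + t_3 = 3p \cdot \text{shift},\ 0 \leq t_k \leq p\}$ with the off-diagonal restrictions $t_k \geq 1$, the proof is a routine mechanical expansion.
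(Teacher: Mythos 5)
Your proposal is correct and follows essentially the same route as the paper: both expand $\det X(u)=\gr\,\qdet\,T(u)$ via the Leibniz formula, use $X_{ij}^{(0)}=\delta_{ij}$ and $X_{ij}^{(r)}=0$ for $r>p$ to kill terms, and then reorganize the truncated-simplex sums $\sum_{t_1+\cdots+t_n=i,\ 0\le t_k\le p}$ by the ranges of $i$; your analysis of where the cubic sum splits (at $r\le i-1$ versus $r\ge i$) is exactly the content the paper delegates to Propositions (A.2)--(A.4) of \cite{gabm}. The only difference is that you carry out that bookkeeping inline rather than by citation.
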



\begin{proof}
 It is easy to see for the polynomials $\overline{d}_{1i}$ and $\overline{d}_{2i}$ for any $i=1,2,\dots,p$. Now, for $i=1,2,3,\ldots,p$
\begin{align*}
\overline{d}_{2\,p+i}&=\sum_{\substack{t_1+t_2=p+i\\
			i\leq t_1,t_2\leq p}}X_{11}^{(t_1)}X_{22}^{(t_2)} -\sum_{\substack{t_1+t_2=p+i\\
			i\leq t_1,t_2\leq p}}X_{12}^{(t_1)}X_{21}^{(t_2)}\ \ (\mbox{because}\ \ X_{ij}^{(r)}=0,\ \forall r>p)\\
		     &=\sum_{t=i}^{p}X_{11}^{(t)}X_{22}^{(p+i-t)}-\sum_{t=i}^{p}X_{12}^{(t)}X_{21}^{(p+i-t)}.
\end{align*}
Since that for $i=1,2,\ldots,p$
\begin{align*}
\overline{d}_{3i}&=\sum_{t_1+t_2+t_3=i} \biggl\{X_{11}^{(t_1)}X_{22}^{(t_2)}X_{33}^{(t_3)}-X_{11}^{(t_1)}X_{23}^{(t_2)}X_{32}^{(t_3)}-X_{12}^{(t_1)}X_{21}^{(t_2)}X_{33}^{(t_3)}-\\
		 &\ \ \ \ -X_{13}^{(t_1)}X_{22}^{(t_2)}X_{31}^{(t_3)}+X_{12}^{(t_1)}X_{23}^{(t_2)}X_{31}^{(t_3)}+X_{13}^{(t_1)}X_{21}^{(t_2)}X_{32}^{(t_3)}\biggr\},
\end{align*}
then, by proposition (A.2) in \cite{gabm},
\begin{align*}
\overline{d}_{31}&=X_{11}^{(1)}+X_{22}^{(1)}+X_{33}^{(1)},\\
\overline{d}_{32}&=X_{11}^{(2)}+X_{22}^{(2)}+X_{33}^{(2)}+X_{11}^{(1)}X_{22}^{(1)}+X_{11}^{(1)}X_{33}^{(1)}+X_{22}^{(1)}X_{33}^{(1)}-X_{23}^{(1)}X_{32}^{(1)}-\\
				 &\ \ \ -X_{12}^{(1)}X_{21}^{(1)}-X_{13}^{(1)}X_{31}^{(1)},\\
\overline{d}_{3i}&=X_{33}^{(i)}+X_{22}^{(i)}+X_{11}^{(i)}+ \sum_{s=1}^{i-1}\left(X_{22}^{(s)}X_{33}^{(i-s)}-X_{23}^{(s)}X_{32}^{(i-s)}\right)+\\
                 &\ \ \ +\sum_{s=1}^{i-1}\left(X_{11}^{(s)}X_{33}^{(i-s)}-X_{13}^{(s)}X_{31}^{(i-s)}+X_{11}^{(s)}X_{22}^{(i-s)}-X_{12}^{(s)}X_{21}^{(i-s)}\right)+\\
                 &\ \ \ +\sum_{r=1}^{i-2}\sum_{s=1}^{i-r-1}\biggl\{X_{11}^{(r)}X_{22}^{(s)}X_{33}^{(i-r-s)}-X_{11}^{(r)}X_{23}^{(s)}X_{32}^{(i-r-s)}-X_{12}^{(r)}X_{21}^{(s)}X_{33}^{(i-r-s)}-\\
                 &\ \ \ -X_{13}^{(r)}X_{22}^{(s)}X_{31}^{(i-r-s)}+X_{12}^{(r)}X_{23}^{(s)}X_{31}^{(i-r-s)}+X_{13}^{(r)}X_{21}^{(s)}X_{32}^{(i-r-s)}\biggr\},\\
                 &\ \ \ \ \ i=3,4,\ldots,p.
\end{align*}
Similarly, since that $i=1,2,\ldots,p$ and $X_{ij}^{(r)}=0,\ \ \forall r>p$
\begin{align*}
\overline{d}_{3\,p+i}&=\sum_{\substack{t_1+t_2+t_3=p+i\\
			0\leq t_1,t_2,t_3\leq p}}\biggl\{X_{11}^{(t_1)}X_{22}^{(t_2)}X_{33}^{(t_3)}-X_{11}^{(t_1)}X_{23}^{(t_2)}X_{32}^{(t_3)}-X_{12}^{(t_1)}X_{21}^{(t_2)}X_{33}^{(t_3)}-\\
		     &\ \ \ \ -X_{13}^{(t_1)}X_{22}^{(t_2)}X_{31}^{(t_3)}+X_{12}^{(t_1)}X_{23}^{(t_2)}X_{31}^{(t_3)}+X_{13}^{(t_1)}X_{21}^{(t_2)}X_{32}^{(t_3)}\biggr\},
\end{align*}
then, of the proposition (A.3) in \cite{gabm} follow that
\begin{align*}
\overline{d}_{3\,p+1}&=\sum_{\substack{t_1+t_2+t_3=p+1\\
  0\leq t_1,t_2,t_3\leq p}}\biggl\{X_{11}^{(t_1)}X_{22}^{(t_2)}X_{33}^{(t_3)}-X_{11}^{(t_1)}X_{23}^{(t_2)}X_{32}^{(t_3)}-X_{12}^{(t_1)}X_{21}^{(t_2)}X_{33}^{(t_3)}-\\
                     &\ \ \ -X_{13}^{(t_1)}X_{22}^{(t_2)}X_{31}^{(t_3)}+X_{12}^{(t_1)}X_{23}^{(t_2)}X_{31}^{(t_3)}+X_{13}^{(t_1)}X_{21}^{(t_2)}X_{32}^{(t_3)}\biggr\}\\
                     &=\sum_{s=1}^p \left(X_{22}^{(s)}X_{33}^{(p+1-s)}-X_{23}^{(s)}X_{32}^{(p+1-s)}+X_{11}^{(s)}X_{33}^{(p+1-s)}-X_{13}^{(s)}X_{31}^{(p+1-s)}\right)+\\
                     &\ \ \ +\sum_{s=1}^p\left(X_{11}^{(s)}X_{22}^{(p+1-s)}-X_{12}^{(s)}X_{21}^{(p+1-s)}\right)+\\
                     &\ \ \ +\sum_{r=1}^{p-1}\sum_{s=1}^{p-r}\biggl\{X_{11}^{(r)}X_{22}^{(s)}X_{33}^{(p+1-r-s)}-X_{11}^{(r)}X_{23}^{(s)}X_{32}^{(p+1-r-s)}-\\
                     &\ \ \ -X_{12}^{(r)}X_{21}^{(s)}X_{33}^{(p+1-r-s)}-X_{13}^{(r)}X_{22}^{(s)}X_{31}^{(p+1-r-s)}+\\
                     &\ \ \ +X_{12}^{(r)}X_{23}^{(s)}X_{31}^{(p+1-r-s)}+X_{13}^{(r)}X_{21}^{(s)}X_{32}^{(p+1-r-s)}\biggr\},\\ 
\overline{d}_{3\,p+i}&= \sum_{s=i}^p\left(X_{22}^{(s)}X_{33}^{(p+i-s)}-X_{23}^{(s)}X_{32}^{(p+i-s)}+X_{11}^{(s)}X_{33}^{(p+i-s)}\right)+\\
                     &\ \ \ +\sum_{s=i}^p\left(-X_{13}^{(s)}X_{31}^{(p+i-s)}+X_{11}^{(s)}X_{22}^{(p+i-s)}-X_{12}^{(s)}X_{21}^{(p+i-s)}\right)+\\
                     &\ \ \ +\sum_{r=1}^{i-1}\sum_{s=i-r}^p\biggl\{X_{11}^{(r)}X_{22}^{(s)}X_{33}^{(p+i-r-s)}-X_{11}^{(r)}X_{23}^{(s)}X_{32}^{(p+i-r-s)}-\\
                     &\ \ \ -X_{12}^{(r)}X_{21}^{(s)}X_{33}^{(p+i-r-s)}-X_{13}^{(r)}X_{22}^{(s)}X_{31}^{(p+i-r-s)}+\\
                     &\ \ \ +X_{12}^{(r)}X_{23}^{(s)}X_{31}^{(p+i-r-s)}+X_{13}^{(r)}X_{21}^{(s)}X_{32}^{(p+i-r-s)}\biggr\}+\\
                     &\ \ \ +\sum_{r=i}^{p}\sum_{s=1}^{p+i-r-1}\biggl\{X_{11}^{(r)}X_{22}^{(s)}X_{33}^{(p+i-r-s)}-X_{11}^{(r)}X_{23}^{(s)}X_{32}^{(p+i-r-s)}-\\
                     &\ \ \ -X_{12}^{(r)}X_{21}^{(s)}X_{33}^{(p+i-r-s)}-X_{13}^{(r)}X_{22}^{(s)}X_{31}^{(p+i-r-s)}+\\
                     &\ \ \ +X_{12}^{(r)}X_{23}^{(s)}X_{31}^{(p+i-r-s)}+X_{13}^{(r)}X_{21}^{(s)}X_{32}^{(p+i-r-s)}\biggr\},\ \ \ i=2,3,\ldots,p.
\end{align*}
Finally, as $X_{ij}^{(r)}=0,\ \ \forall r>p$ and by proposition (A.4) in \cite{gabm}, for each $i=1,2,\ldots,p$  
\begin{align*}                     
\overline{d}_{3\,2p+i}&=\sum_{\substack{t_1+t_2+t_3=2p+i\\
  0\leq t_1,t_2,t_3\leq p}}\biggl\{X_{11}^{(t_1)}X_{22}^{(t_2)}X_{33}^{(t_3)}-X_{11}^{(t_1)}X_{23}^{(t_2)}X_{32}^{(t_3)}-X_{12}^{(t_1)}X_{21}^{(t_2)}X_{33}^{(t_3)}-\\
					&\ \ \ -X_{13}^{(t_1)}X_{22}^{(t_2)}X_{31}^{(t_3)}+X_{12}^{(t_1)}X_{23}^{(t_2)}X_{31}^{(t_3)}+X_{13}^{(t_1)}X_{21}^{(t_2)}X_{32}^{(t_3)}\biggr\}\\
					&=\sum_{r=i}^{p}\sum_{s=p+i-r}^{p}\biggl\{X_{11}^{(r)}X_{22}^{(s)}X_{33}^{(2p+i-r-s)}-X_{11}^{(r)}X_{23}^{(s)}X_{32}^{(2p+i-r-s)}-\\
					&\ \ \ -X_{12}^{(r)}X_{21}^{(s)}X_{33}^{(2p+i-r-s)}-X_{13}^{(r)}X_{22}^{(s)}X_{31}^{(2p+i-r-s)}+\\
					&\ \ \ +X_{12}^{(r)}X_{23}^{(s)}X_{31}^{(2p+i-r-s)}+X_{13}^{(r)}X_{21}^{(s)}X_{32}^{(2p+i-r-s)}\biggr\}.
\end{align*}

\end{proof}


\vspace{0.1cm}

As consequence of the previous lemma, we can show the Gelfand-Tsetlin variety for $Y_p(\gl_3)$ with other polynomials a little less complicated. For more details in the computations see proposition ($5.9$) pages $56-63$ in \cite{gabm}.


\begin{proposition}
\label{equivGTs}

$$V\left(\left\{\overline{d}_{ij}\right\}_{\substack{
  i=1,2,3\ \\
  j=1,2,\ldots,ip
 }}\right)=V\left(\left\{p_{ij}\right\}_{\substack{
  i=1,2,3\ \\
  j=1,2,\ldots,ip
 }}\right)\subset \k^{9p},$$
 where
 \begin{align*}
p_{1i}      &=X_{11}^{(i)},\ \ \ i=1,2,\ldots,p,\\
p_{21}      &=X_{22}^{(1)},\\
p_{2i}      &=X_{22}^{(i)}-\sum_{t=1}^{i-1}X_{12}^{(t)}X_{21}^{(i-t)},\ \ \ i=2,3,\ldots,p,\\
p_{2\,p+i}  &=\sum_{t=i}^{p}X_{12}^{(t)}X_{21}^{(p+i-t)},\ \ \ i=1,2,\ldots,p,\\
p_{31}      &=X_{33}^{(1)},\\
p_{32}      &= X_{33}^{(2)}-X_{23}^{(1)}X_{32}^{(1)}-X_{13}^{(1)}X_{31}^{(1)},\\
p_{3i}      &= -\sum_{r=1}^{i-2}\sum_{s=2}^{i-r-1}X_{13}^{(r)}X_{22}^{(s)}X_{31}^{(i-r-s)}+X_{33}^{(i)}-\sum_{s=1}^{i-1}\left(X_{13}^{(s)}X_{31}^{(i-s)}+X_{23}^{(s)}X_{32}^{(i-s)}\right)+\\
            &\ \ \ +\sum_{r=1}^{i-2}\sum_{s=1}^{i-r-1}\biggl\{X_{12}^{(r)}X_{23}^{(s)}X_{31}^{(i-r-s)}+X_{13}^{(r)}X_{21}^{(s)}X_{32}^{(i-r-s)}\biggr\},\ \ \ i=3,4,\ldots,p, \\
p_{3\,p+1}  &=\sum_{r=1}^{p-1}\sum_{s=1}^{p-r}\biggl\{ X_{12}^{(r)}X_{23}^{(s)}X_{31}^{(p+1-r-s)}+X_{13}^{(r)}X_{21}^{(s)}X_{32}^{(p+1-r-s)}\biggr\}-\\
            &\ \ \ -\sum_{s=1}^p\left(X_{13}^{(s)}X_{31}^{(p+1-s)}+ X_{23}^{(s)}X_{32}^{(p+1-s)}\right)-\sum_{r=1}^{p-1}\sum_{s=2}^{p-r}X_{13}^{(r)}X_{22}^{(s)}X_{31}^{(p+1-r-s)},\\
p_{3\,p+i}  &=-\sum_{s=i}^p\left(X_{23}^{(s)}X_{32}^{(p+i-s)}-X_{13}^{(s)}X_{31}^{(p+i-s)}\right)+\\
            &\ \ \ +\sum_{r=1}^{i-1}\sum_{s=i-r}^p\biggl\{-X_{13}^{(r)}X_{22}^{(s)}X_{31}^{(p+i-r-s)}+X_{12}^{(r)}X_{23}^{(s)}X_{31}^{(p+i-r-s)}+\\
            &\ \ \ +X_{13}^{(r)}X_{21}^{(s)}X_{32}^{(p+i-r-s)}\biggr\}+\\
            &\ \ \ +\sum_{r=i}^{p}\sum_{s=1}^{p+i-r-1}\biggl\{-X_{13}^{(r)}X_{22}^{(s)}X_{31}^{(p+i-r-s)}+X_{12}^{(r)}X_{23}^{(s)}X_{31}^{(p+i-r-s)}+\\
            &\ \ \ +X_{13}^{(r)}X_{21}^{(s)}X_{32}^{(p+i-r-s)}\biggr\},\ \ \ i=2,3,\ldots,p,\\      
p_{3\, 2p+i}&=\sum_{r=i}^{p}\sum_{s=p+i-r}^{p} \biggl\{-X_{13}^{(r)}X_{22}^{(s)}X_{31}^{(2p+i-r-s)}+X_{12}^{(r)}X_{23}^{(s)}X_{31}^{(2p+i-r-s)}+\\
			&\ \ \ +X_{13}^{(r)}X_{21}^{(s)}X_{32}^{(2p+i-r-s)}\biggr\},\ \ \ i=1,2,\ldots,p.
\end{align*}
\end{proposition}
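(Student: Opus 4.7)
The plan is to prove the equality of the two varieties by showing that, in the polynomial ring $\k[X_{ij}^{(k)}]$, the two families $\{\overline{d}_{ij}\}$ and $\{p_{ij}\}$ generate the same ideal. Both families are naturally ordered (by row $i$, then by column $j$) in such a way that inside each row the leading piece of $\overline{d}_{ij}$ already differs from $p_{ij}$ only by terms that lie in the ideal generated by the preceding entries. So the strategy is an inductive reduction, one row at a time.

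First I would observe that for $i=1$ the two lists coincide, $\overline{d}_{1k}=p_{1k}=X_{11}^{(k)}$, and hence the ideal $(p_{1k})=(\overline{d}_{1k})$ forces $X_{11}^{(k)}=0$ for $k=1,\dots,p$ on either variety. Substituting $X_{11}^{(k)}=0$ into the explicit formulas of Lemma~\ref{GTsPolyn10}, the polynomials $\overline{d}_{21},\overline{d}_{22},\dots,\overline{d}_{2p}$ collapse directly to $p_{21},p_{22},\dots,p_{2p}$, and each $\overline{d}_{2,p+i}$ loses its $X_{11}^{(\cdot)}X_{22}^{(\cdot)}$ contribution, leaving $-p_{2,p+i}$. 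Thus $\overline{d}_{2j}\equiv \pm p_{2j}\pmod{(p_{1k})}$, which handles row~$2$.

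For row~$3$ I would exploit the same mechanism, using now the richer ideal $(p_{1k},p_{2j})$. The relations $p_{2s}=0$ for $s\le p$ furnish the substitution rule
\[
X_{22}^{(s)} \;\equiv\; \sum_{t=1}^{s-1} X_{12}^{(t)}X_{21}^{(s-t)} \pmod{(p_{1k},p_{2j})},
\]
together with $X_{22}^{(1)}\equiv 0$. Plugging this into $\overline{d}_{3i}$ after first killing all $X_{11}^{(\cdot)}$, the quadratic block $X_{22}^{(i)}-\sum_s X_{12}^{(s)}X_{21}^{(i-s)}$ collapses to $p_{2i}$, and the crucial cancellation is a bookkeeping identity
\[
\sum_{s=1}^{i-1} X_{22}^{(s)}X_{33}^{(i-s)} \;\equiv\; \sum_{r,s\ge 1,\, r+s\le i-1} X_{12}^{(r)}X_{21}^{(s)}X_{33}^{(i-r-s)} \pmod{(p_{2s})},
\]
which exactly matches the triple sum $\sum X_{12}^{(r)}X_{21}^{(s)}X_{33}^{(i-r-s)}$ appearing in $\overline{d}_{3i}$ with the opposite sign. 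The only residual cubic term, namely $-\sum_r X_{13}^{(r)}X_{22}^{(1)}X_{31}^{(i-r-1)}$, lies in $(p_{21})$, which is why in $p_{3i}$ the inner summation starts at $s=2$ instead of $s=1$. This identifies $\overline{d}_{3i}\equiv p_{3i}$. The same argument (with the summation ranges shifted to respect $X_{ij}^{(r)}=0$ for $r>p$) yields $\overline{d}_{3,p+i}\equiv p_{3,p+i}$ and $\overline{d}_{3,2p+i}\equiv p_{3,2p+i}$.

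The conclusion then follows: each $\overline{d}_{ij}$ lies in the ideal $(p_{rs})$, and the reverse inclusion is symmetric (the displayed equivalences can be read either direction since the leading monomial in each $p_{ij}$ also appears in $\overline{d}_{ij}$). Hence the two ideals coincide and the two varieties are equal. The main obstacle is purely combinatorial: the triple sums in $\overline{d}_{3,p+i}$ and $\overline{d}_{3,2p+i}$ have index ranges cut off by the condition $r,s\le p$, and one must verify that the cancellation of $X_{12}X_{21}X_{33}$-type terms with $\sum X_{22}^{(s)}X_{33}^{(\cdot)}$ still closes correctly after these truncations; this is essentially the content of propositions (A.2)--(A.4) of \cite{gabm}.
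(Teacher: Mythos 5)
Your proposal is correct and follows essentially the same route as the paper: the paper's proof writes each $\overline{d}_{ij}$ explicitly as $p_{ij}$ plus a polynomial combination of the earlier $p_{rs}$'s (e.g.\ $\overline{d}_{2i}=p_{1i}+\sum_{t}p_{1t}X_{22}^{(i-t)}+p_{2i}$), which is exactly your triangular reduction modulo the ideal of preceding generators, made explicit. The congruence bookkeeping you describe (killing $X_{11}^{(\cdot)}$, substituting $X_{22}^{(s)}\equiv\sum_t X_{12}^{(t)}X_{21}^{(s-t)}$, and absorbing the $s=1$ terms into $(p_{21})$) matches the paper's computation term for term.
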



\begin{proof} 
Clearly $\overline{d}_{1i}=p_{1i}$ ($i=1,2,\ldots,p$) and $\overline{d}_{21}=p_{11}+p_{21}$. Now, for $i=2,3,\ldots,p$
\begin{align*}
\overline{d}_{2i}    &= p_{1i}+\sum_{t=1}^{i-1}p_{1t}X_{22}^{(i-t)}+p_{2i}.
\end{align*}
Similarly, for each $i=1,2,\ldots,p$
\begin{align*}
\overline{d}_{2\,p+i}&= \sum_{t=i}^{p}p_{1t}X_{22}^{(p+i-t)}-p_{2\,p+i}.
\end{align*}
Continuing with the same argument $\overline{d}_{31}=p_{11}+p_{21}+p_{31}$ and
\begin{align*}
\overline{d}_{32}&= p_{12}+p_{22}+p_{11}X_{22}^{(1)}+p_{11}X_{33}^{(1)}+p_{21}X_{33}^{(1)}+p_{32}.
\end{align*}
For each $i=3,4,\ldots,p$
\begin{align*}
\overline{d}_{3i}    &= p_{1i}+\sum_{s=1}^{i-1}p_{1s}\left(X_{33}^{(i-s)}+X_{22}^{(i-s)}\right)+\\
&\ \ \ +\sum_{r=1}^{i-2}\sum_{s=1}^{i-r-1}p_{1r}\left(X_{22}^{(s)}X_{33}^{(i-r-s)}-X_{23}^{(s)}X_{32}^{(i-r-s)}\right)+\\
                     &\ \ \ +p_{2i}+p_{21}X_{33}^{(i-1)}+ \sum_{r=1}^{i-2}p_{2\,i-r}X_{33}^{(r)}+p_{3i}.
\end{align*}
Analogously
\begin{align*}
\overline{d}_{3\,p+1}&= \sum_{s=1}^pp_{1s}\left(X_{33}^{(p+1-s)}+X_{22}^{(p+1-s)}\right)+\\
					 &\ \ \ +\sum_{r=1}^{p-1}\sum_{s=1}^{p-r}p_{1r}\left(X_{22}^{(s)}X_{33}^{(p+1-r-s)}-X_{23}^{(s)}X_{32}^{(p+1-r-s)}\right)+\\
                     &\ \ \ +p_{21}X_{33}^{(p)}+\sum_{r=1}^{p-1}X_{33}^{(r)}p_{2\,p+1-r}-p_{2\,p+1}+p_{3\,p+1}.
\end{align*}                     
For all $i=2,3,\ldots,p$
\begin{align*}
\overline{d}_{3\,p+i}&= \sum_{s=i}^pp_{1s}\left(X_{22}^{(p+i-s)}+X_{33}^{(p+i-s)}\right)+\\
					 &\ \ \ +\sum_{r=1}^{i-1}\sum_{s=i-r}^pp_{1r}\left(X_{22}^{(s)}X_{33}^{(p+i-r-s)}-X_{23}^{(s)}X_{32}^{(p+i-r-s)}\right)+\\
                     &\ \ \ +\sum_{r=i}^{p}\sum_{s=1}^{p+i-r-1}p_{1r}\left(X_{22}^{(s)}X_{33}^{(p+i-r-s)}-X_{23}^{(s)}X_{32}^{(p+i-r-s)}\right)-p_{2i}+\\
                     &\ \ \ +\sum_{r=i}^pX_{33}^{(r)}p_{2\,p+i-r}-\sum_{r=1}^{i-1}X_{33}^{(r)}p_{2\,p+i-r}+p_{3\,p+i}.
\end{align*}
Finally, for each $i=1,2,\ldots,p$
\begin{align*}
\overline{d}_{3\,2p+i}&= \sum_{r=i}^{p}\sum_{s=p+i-r}^{p}p_{1r}\left(X_{22}^{(s)}X_{33}^{(2p+i-r-s)}-X_{23}^{(s)}X_{32}^{(2p+i-r-s)}\right)-\\
&\ \ \ -\sum_{r=i}^{p}X_{33}^{(r)}p_{2\,p+i-r}+p_{3\,2p+i}.
\end{align*}

\end{proof}


\begin{corollary}
\label{equivGTsYp(n)}
The Gelfand-Tsetlin variety $\gts$ for $Y_p(\gl_n)$ have the form:
$$\gts=V\left(\left\{p_{ij}\right\}_{\substack{i=1,2,\ldots,n\\ j=1,2,\ldots,ip
 }}\right)\subset \k^{9p},$$
 where
 $$p_{ij}=\overline{d}_{ij},\ \ \mbox{for}\ \ 4\leq i\leq n\ \ \mbox{and}\ \ 1\leq j\leq ip.$$
\end{corollary}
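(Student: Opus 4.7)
I would reduce this directly to Proposition \eqref{equivGTs}, which already handles the work for $i \in \{1,2,3\}$. By Definition \eqref{GTsVar}, the Gelfand-Tsetlin variety $\gts$ for $Y_p(\gl_n)$ is cut out in $\k^{n^2 p}$ by the full collection $\{\overline{d}_{ij}\}$ with $1 \leq i \leq n$ and $1 \leq j \leq ip$. For indices $i \geq 4$ we have set $p_{ij} := \overline{d}_{ij}$ by definition, so on that part there is literally nothing to check.

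For the remaining indices $i \in \{1,2,3\}$, I would invoke Proposition \eqref{equivGTs}. What the proof of that proposition actually exhibits is slightly stronger than the stated variety equality: the explicit triangular substitution formulas write each $\overline{d}_{ij}$ with $i \leq 3$ as a polynomial combination (with coefficients in the subring $\k[X_{st}^{(k)}]_{s,t \leq 3,\,1 \leq k \leq p}$) of the $p_{kl}$'s with $k \leq 3$, and conversely each $p_{ij}$ is obtained from $\overline{d}_{ij}$ by subtracting such a combination. Consequently the ideals $(\{\overline{d}_{ij}\}_{i=1,2,3})$ and $(\{p_{ij}\}_{i=1,2,3})$ agree inside $\k[X_{ij}^{(k)}]_{1 \leq i,j \leq 3,\,1 \leq k \leq p}$, and this equality persists after enlarging the ring to $\k[X_{ij}^{(k)}]_{1 \leq i,j \leq n,\,1 \leq k \leq p}$, since the latter is just the polynomial ring over the former in the extra variables $X_{ij}^{(k)}$ with $\max(i,j) \geq 4$, and ideal equality is preserved by flat base change.

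Combining the two blocks, the full generating sets $\{\overline{d}_{ij}\}_{1 \leq i \leq n,\,1 \leq j \leq ip}$ and $\{p_{ij}\}_{1 \leq i \leq n,\,1 \leq j \leq ip}$ generate the same ideal in the ambient polynomial ring, so their vanishing loci in $\k^{n^2 p}$ coincide, which is exactly the claimed equality. I do not anticipate any real obstacle: all the combinatorial manipulations have already been carried out in the proof of Proposition \eqref{equivGTs}, and this corollary is essentially a packaging statement that isolates the $3 \times 3$ block of polynomials and leaves the higher-order generators $\overline{d}_{ij}$ ($i \geq 4$) untouched.
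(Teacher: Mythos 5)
Your proposal is correct and follows essentially the same route as the paper, whose proof is simply the one-line citation of Definition \eqref{GTsVar} together with Proposition \eqref{equivGTs}; your added remarks (the triangular substitution giving equality of ideals for the $i\le 3$ block, and the persistence of that equality after adjoining the variables $X_{ij}^{(k)}$ with $\max(i,j)\ge 4$) just make explicit what the paper leaves implicit. Nothing further is needed.
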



\begin{proof}
 Definition \eqref{GTsVar} and by Proposition \eqref{equivGTs}.
 
\end{proof}


\subsection{Equidimensionality for some Gelfand-Tsetlin varieties}

By Corollary \eqref{KWvsGTs} follow that the Gelfand-Tsetlin variety $\gts$ for $Y_1(\gl_n)$ coincides with the Gelfand-Tsetlin variety for $\gl_n$ and by Ovsienko's Theorem \eqref{tmaOvs} we have that this variety is equidimensional of dimension 
  $$\dim\gts=\frac{n(n-1)}{2}.$$
Ovsienko's Theorem \eqref{tmaOvs} states that all irreducible components have the same dimension, but does not say anything about its decomposition in irreducible components and the following result we will show such decomposition. 


\begin{proposition}
\label{Y1(3)}
The Gelfand-Tsetlin variety $\gts$ for $Y_1(\gl_3)$ is equidimensional of dimension $\dim\gts=3$ and its decomposition in irreducible components is
$$\gts=\bigcup_{i=1}^7C_i$$ 
where
\begin{align*}
C_1&= V\left(X_{11}^{(1)},X_{22}^{(1)},X_{12}^{(1)},X_{33}^{(1)},X_{23}^{(1)},X_{13}^{(1)}\right),\\
C_2&= V\left(X_{11}^{(1)},X_{22}^{(1)},X_{12}^{(1)},X_{33}^{(1)},X_{32}^{(1)},X_{13}^{(1)}\right),\\
C_3&= V\left(X_{11}^{(1)},X_{22}^{(1)},X_{12}^{(1)},X_{33}^{(1)},X_{13}^{(1)}X_{31}^{(1)}+X_{23}^{(1)}X_{32}^{(1)},X_{21}^{(1)}\right),\\
C_4&= V\left(X_{11}^{(1)},X_{22}^{(1)},X_{12}^{(1)},X_{33}^{(1)},X_{31}^{(1)},X_{32}^{(1)}\right),\\
C_5&= V\left(X_{11}^{(1)},X_{22}^{(1)},X_{21}^{(1)},X_{33}^{(1)},X_{13}^{(1)},X_{23}^{(1)}\right),\\
C_6 & = V\left(X_{11}^{(1)},X_{22}^{(1)},X_{21}^{(1)},X_{33}^{(1)},X_{31}^{(1)},X_{23}^{(1)}\right),\\
C_7 & = V\left(X_{11}^{(1)},X_{22}^{(1)},X_{21}^{(1)},X_{33}^{(1)},X_{32}^{(1)},X_{31}^{(1)}\right).
\end{align*}
\end{proposition}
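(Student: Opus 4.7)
The plan is to make the defining polynomials of $\gts$ explicit using Corollary \eqref{equivGTsYp(n)}. Specializing to $n=3$, $p=1$, the ambient space is $\k^{9}$ and $\gts$ is cut out by the six polynomials $p_{11}=X_{11}^{(1)}$, $p_{21}=X_{22}^{(1)}$, $p_{31}=X_{33}^{(1)}$, $p_{22}=X_{12}^{(1)}X_{21}^{(1)}$, $p_{32}=-X_{13}^{(1)}X_{31}^{(1)}-X_{23}^{(1)}X_{32}^{(1)}$, and $p_{33}=X_{12}^{(1)}X_{23}^{(1)}X_{31}^{(1)}+X_{13}^{(1)}X_{21}^{(1)}X_{32}^{(1)}-X_{13}^{(1)}X_{22}^{(1)}X_{31}^{(1)}$, whose last summand is killed by $p_{21}$ on $\gts$. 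Dropping the superscripts, the remaining task is to decompose
$$W=V\bigl(X_{12}X_{21},\ X_{13}X_{31}+X_{23}X_{32},\ X_{12}X_{23}X_{31}+X_{13}X_{21}X_{32}\bigr)\subset\k^{6}.$$

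Next I would split on the binomial $X_{12}X_{21}=0$ and then, in each branch, split further by the factor of the cubic that is forced to vanish. Under $X_{12}=X_{21}=0$ the cubic is identically zero and one is left only with the quadric $X_{13}X_{31}+X_{23}X_{32}=0$; as a nondegenerate quadratic form in four variables over an algebraically closed field it is irreducible, producing the component $C_3$. Under $X_{12}=X_{13}=0$ the cubic again vanishes and the quadric becomes $X_{23}X_{32}=0$, giving the linear components $C_1$ and $C_2$; the remaining sub-branch $X_{12}=X_{32}=X_{31}=0$ produces $C_4$. The symmetric analysis under $X_{21}=0$, with the cubic forcing $X_{23}=0$ or $X_{31}=0$, yields $C_5$, $C_6$ and $C_7$.

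Each of $C_1,C_2,C_4,C_5,C_6,C_7$ is a coordinate linear subspace of $\k^{9}$ of dimension $3$, hence irreducible, while $C_3$ is cut out by five coordinate hyperplanes together with the irreducible quadric, so $C_3$ is irreducible of dimension $3$ as well. A direct comparison of defining ideals shows that no one of these seven closed sets is contained in another, so they are precisely the irreducible components of $\gts$. The common dimension $3=\tfrac{n(n-1)}{2}$ gives equidimensionality and is consistent with Ovsienko's Theorem \eqref{tmaOvs} (applicable via Remark \eqref{Y1(n)gl1}). The delicate step is the case analysis itself: one must verify that splitting the cubic introduces no spurious branches, which is ensured here because in every sub-branch the cubic either vanishes identically from the assumed linear relations or else its vanishing is already implied by the quadric together with those linear relations.
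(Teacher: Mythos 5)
Your proof is correct and follows essentially the same route as the paper's: both write out the explicit generators $p_{11},p_{21},p_{22},p_{31},p_{32},p_{33}$ from Proposition \eqref{equivGTs} (discarding the $X_{13}^{(1)}X_{22}^{(1)}X_{31}^{(1)}$ term via $p_{21}$) and then decompose by successively splitting the product $X_{12}^{(1)}X_{21}^{(1)}$, the cubic, and the resulting quadrics into the same seven components. Your added verifications --- irreducibility of the quadric component $C_3$ as a nondegenerate quadratic form and the absence of mutual containments among the seven pieces --- only make explicit what the paper's proof leaves implicit.
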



\begin{proof}
Follow of the Proposition \eqref{equivGTs} or by Corollary \eqref{equivGTsYp(n)} that
$$\gts=V\left(p_{11},p_{21},p_{22},p_{31},p_{32},p_{33}\right)\subset\k^{9},$$
where
$$p_{11}=X_{11}^{(1)},\ \ \ \
p_{21}=X_{22}^{(1)},\ \ \ \
p_{22}=X_{12}^{(1)}X_{21}^{(1)},\ \ \ \
p_{31}=X_{33}^{(1)},$$
$$p_{32}=X_{13}^{(1)}X_{31}^{(1)}+ X_{23}^{(1)}X_{32}^{(1)}\ \ \ \ \mbox{and}\ \ \ \ 
p_{33}=X_{12}^{(1)}X_{23}^{(1)}X_{31}^{(1)}+X_{13}^{(1)}X_{21}^{(1)}X_{32}^{(1)},$$ 
hence
\begin{align*}
\gts&=V\left(X_{11}^{(1)},X_{22}^{(1)},X_{12}^{(1)},X_{33}^{(1)},X_{13}^{(1)}X_{31}^{(1)}+X_{23}^{(1)}X_{32}^{(1)},X_{13}^{(1)}X_{21}^{(1)}X_{32}^{(1)}\right)\cup\\
    &\ \ \ \cup V\left(X_{11}^{(1)},X_{22}^{(1)},X_{21}^{(1)},X_{33}^{(1)},X_{13}^{(1)}X_{31}^{(1)}+X_{23}^{(1)}X_{32}^{(1)},X_{12}^{(1)}X_{23}^{(1)}X_{31}^{(1)}\right)\\
    &= V\left(X_{11}^{(1)},X_{22}^{(1)},X_{12}^{(1)},X_{33}^{(1)},X_{23}^{(1)}X_{32}^{(1)},X_{13}^{(1)}\right)\cup \\
    &\ \ \ \cup V\left(X_{11}^{(1)},X_{22}^{(1)},X_{12}^{(1)},X_{33}^{(1)},X_{13}^{(1)}X_{31}^{(1)}+X_{23}^{(1)}X_{32}^{(1)},X_{21}^{(1)}\right)\cup\\
    &\ \ \ \cup V\left(X_{11}^{(1)},X_{22}^{(1)},X_{12}^{(1)},X_{33}^{(1)},X_{13}^{(1)}X_{31}^{(1)},X_{32}^{(1)}\right)\cup \\
    &\ \ \ \cup V\left(X_{11}^{(1)},X_{22}^{(1)},X_{21}^{(1)},X_{33}^{(1)},X_{13}^{(1)}X_{31}^{(1)},X_{23}^{(1)}\right)\cup\\
    &\ \ \ \cup V\left(X_{11}^{(1)},X_{22}^{(1)},X_{21}^{(1)},X_{33}^{(1)},X_{23}^{(1)}X_{32}^{(1)},X_{31}^{(1)}\right)\\
    &= V\left(X_{11}^{(1)},X_{22}^{(1)},X_{12}^{(1)},X_{33}^{(1)},X_{23}^{(1)},X_{13}^{(1)}\right)\cup\\
    &\ \ \ \cup  V\left(X_{11}^{(1)},X_{22}^{(1)},X_{12}^{(1)},X_{33}^{(1)},X_{32}^{(1)},X_{13}^{(1)}\right)\cup\\
    &\ \ \ \cup V\left(X_{11}^{(1)},X_{22}^{(1)},X_{12}^{(1)},X_{33}^{(1)},X_{13}^{(1)}X_{31}^{(1)}+X_{23}^{(1)}X_{32}^{(1)},X_{21}^{(1)}\right)\cup\\
    &\ \ \ \cup  V\left(X_{11}^{(1)},X_{22}^{(1)},X_{12}^{(1)},X_{33}^{(1)},X_{31}^{(1)},X_{32}^{(1)}\right)\cup\\
    &\ \ \ \cup  V\left(X_{11}^{(1)},X_{22}^{(1)},X_{21}^{(1)},X_{33}^{(1)},X_{13}^{(1)},X_{23}^{(1)}\right)\cup\\
    &\ \ \ \cup  V\left(X_{11}^{(1)},X_{22}^{(1)},X_{21}^{(1)},X_{33}^{(1)},X_{31}^{(1)},X_{23}^{(1)}\right)\cup \\
    &\ \ \ \cup  V\left(X_{11}^{(1)},X_{22}^{(1)},X_{21}^{(1)},X_{33}^{(1)},X_{32}^{(1)},X_{31}^{(1)}\right).
\end{align*}
Therefore, $\gts$ for $Y_1(\gl_3)$ has $7$ irreducibles components and each component has dimension $3$.

\end{proof}


\vspace{0.2cm}

With the aim of to show the difficulty of the equidimensionality of $\gts$ calculating its decomposition, the following proposition guarantee the equidimensionality of the Gelfand-Tsetlin variety $\gts$ for $Y_2(\gl_3)$ exhibiting its decomposition in irreducible components. All the details of such decomposition are in \cite{gabm} (Appendix (B) pages $97-123$), basically we used the same argument of the previous proposition.


\begin{proposition}
\label{Y2(3)}
The Gelfand-Tsetlin variety $\gts$ for $Y_2(\gl_3)$ is equidimensional with $\dim\gts=6.$
\end{proposition}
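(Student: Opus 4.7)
The plan is to follow the strategy used in the proof of Proposition \eqref{Y1(3)}. By Corollary \eqref{equivGTsYp(n)} applied with $n=3$ and $p=2$, the variety $\gts$ is cut out in $\k^{18}$ by the twelve polynomials $p_{11}, p_{12}, p_{21}, p_{22}, p_{23}, p_{24}, p_{31}, p_{32}, p_{33}, p_{34}, p_{35}, p_{36}$ obtained by specializing Proposition \eqref{equivGTs} at $p=2$. Since $18 - 12 = 6$, Proposition \eqref{regvseq}(ii) tells us that showing $\gts$ is equidimensional of dimension $6$ is equivalent to showing these twelve polynomials form a complete intersection; exhibiting an explicit decomposition into irreducible components each of dimension $6$ would establish both facts at once.

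The first reduction is to peel off those generators that are, up to a triangular change of coordinates in the ambient polynomial algebra, pure variables. The four polynomials $p_{11} = X_{11}^{(1)}$, $p_{12} = X_{11}^{(2)}$, $p_{21} = X_{22}^{(1)}$ and $p_{31} = X_{33}^{(1)}$ are manifestly variables. Furthermore $p_{22} = X_{22}^{(2)} - X_{12}^{(1)}X_{21}^{(1)}$ and $p_{32} = X_{33}^{(2)} - X_{23}^{(1)}X_{32}^{(1)} - X_{13}^{(1)}X_{31}^{(1)}$ are each of the form ``variable plus polynomial in other variables'' and become the pure variables $X_{22}^{(2)}, X_{33}^{(2)}$ after an elementary automorphism of the ambient polynomial ring. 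Six successive applications of Proposition \eqref{projhyper} then reduce the problem to checking that the six specialized polynomials $\tilde p_{23}, \tilde p_{24}, \tilde p_{33}, \tilde p_{34}, \tilde p_{35}, \tilde p_{36}$, obtained by substituting zero for the six eliminated variables, form a complete intersection in the $12$-dimensional affine space with coordinates $X_{12}^{(k)}, X_{13}^{(k)}, X_{21}^{(k)}, X_{23}^{(k)}, X_{31}^{(k)}, X_{32}^{(k)}$ for $k = 1, 2$.

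The second step is an iterative case split modeled exactly on the argument for Proposition \eqref{Y1(3)}. The polynomials $\tilde p_{23} = X_{12}^{(1)}X_{21}^{(2)} + X_{12)}^{(2)}X_{21}^{(1)}$ and $\tilde p_{24} = X_{12}^{(2)}X_{21}^{(2)}$ live purely in the $X_{12}, X_{21}$ sector; the equation $\tilde p_{24} = 0$ splits the zero set into two branches, and substituting into $\tilde p_{23}$ splits each of these again, producing a small family of linear strata in the $(2,1)$ sector. For each such stratum I would then substitute into $\tilde p_{33}, \tilde p_{34}, \tilde p_{35}, \tilde p_{36}$ and carry out an analogous case analysis in the remaining $X_{13}, X_{31}, X_{23}, X_{32}$ coordinates, exactly in the spirit of how $p_{33} = X_{12}^{(1)}X_{23}^{(1)}X_{31}^{(1)} + X_{13}^{(1)}X_{21}^{(1)}X_{32}^{(1)}$ was handled in the proof of Proposition \eqref{Y1(3)}. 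The resulting leaves are linear subspaces of $\k^{12}$.

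Finally, a direct count of free coordinates in each leaf should give dimension exactly $6$, whence the union of the leaves is the desired equidimensional decomposition of $\gts$. The main obstacle is purely combinatorial: the case tree is considerably larger than in Proposition \eqref{Y1(3)}, and careful bookkeeping is needed to merge branches that describe the same closed subvariety, to check that each final stratum is genuinely irreducible rather than a further union of linear pieces, and to verify the dimension count in every case. This bookkeeping is exactly what fills the twenty-seven pages of Appendix~B of \cite{gabm} referenced in the statement.
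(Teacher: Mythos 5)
Your overall strategy is the one the paper uses: Proposition \eqref{Y2(3)} is proved there by writing out the twelve polynomials $p_{ij}$ of Proposition \eqref{equivGTs} for $p=2$ and then, ``analogously'' to the proof of Proposition \eqref{Y1(3)}, exhibiting an explicit decomposition of $\gts$ into $22$ irreducible components, each of dimension $6$, with the full case analysis deferred to Appendix~B of \cite{gabm}. Your preliminary elimination of $X_{11}^{(1)},X_{11}^{(2)},X_{22}^{(1)},X_{22}^{(2)},X_{33}^{(1)},X_{33}^{(2)}$ via a triangular change of coordinates and Proposition \eqref{projhyper} is a harmless extra normalization not present in the paper.

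There is, however, one concrete misstep: the final strata are \emph{not} all linear subspaces, so your termination criterion (``split until the leaves are linear'') and your dimension count (``number of free coordinates'') both fail as stated. Already for $p=1$ the component $C_3=V\bigl(X_{11}^{(1)},X_{22}^{(1)},X_{12}^{(1)},X_{33}^{(1)},X_{13}^{(1)}X_{31}^{(1)}+X_{23}^{(1)}X_{32}^{(1)},X_{21}^{(1)}\bigr)$ is an irreducible quadric cone of dimension $3$; the quadric $xy+zw=0$ in $\k^4$ contains only $2$-dimensional linear subspaces, so insisting on linear leaves would replace $C_3$ by strata of dimension $2$ that are not components and would wreck the equidimensionality count. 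For $p=2$ this is pervasive: most of the $22$ components in the paper's list (for instance $C_4$, $C_5$, $C_7$, $C_8$, $C_{10}$--$C_{12}$, $C_{14}$--$C_{16}$, $C_{19}$, $C_{21}$) are cut out by genuinely non-linear equations such as $X_{33}^{(2)}-X_{23}^{(1)}X_{32}^{(1)}-X_{13}^{(1)}X_{31}^{(1)}$ or $X_{23}^{(2)}X_{32}^{(2)}+X_{13}^{(2)}X_{31}^{(2)}$. The case tree must therefore terminate at irreducible, possibly non-linear, strata; for each one you must prove irreducibility and compute the dimension by another device (as a graph over a coordinate subspace, or as a complete intersection via Proposition \eqref{regvseq}), exactly as the paper does for $C_3$ in Proposition \eqref{Y1(3)}. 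With that repair your argument coincides with the paper's.
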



\begin{proof}
By proposition \eqref{equivGTs} or corollary \eqref{equivGTsYp(n)}, we have that
$$\gts=V\left(\left\{p_{ij}\right\}_{\substack{
  i=1,2,3\ \\
  j=1,2,\ldots,2i
 }}\right)\subset\k^{18},$$
where 
 \begin{align*}
p_{11}&=X_{11}^{(1)},\\
p_{12}&=X_{11}^{(2)},\\
p_{21}&=X_{22}^{(1)},\\
p_{22}&=X_{22}^{(2)}-X_{12}^{(1)}X_{21}^{(1)},\\
p_{23}&=X_{12}^{(1)}X_{21}^{(2)}+X_{12}^{(2)}X_{21}^{(1)},\\
p_{24}&=X_{12}^{(2)}X_{21}^{(2)},\\
p_{31}&=X_{33}^{(1)},\\
p_{32}&=X_{33}^{(2)}-X_{23}^{(1)}X_{32}^{(1)}-X_{13}^{(1)}X_{31}^{(1)},\\
p_{33}&=X_{12}^{(1)}X_{23}^{(1)}X_{31}^{(1)}+X_{13}^{(1)}X_{21}^{(1)}X_{32}^{(1)}-X_{13}^{(1)}X_{31}^{(2)}-X_{23}^{(1)}X_{32}^{(2)}-X_{13}^{(2)}X_{31}^{(1)}-\\
	  &\ \ \ -X_{23}^{(2)}X_{32}^{(1)}-X_{13}^{(1)}X_{22}^{(1)}X_{31}^{(1)},\\
p_{34}&=-X_{23}^{(2)}X_{32}^{(2)}-X_{13}^{(2)}X_{31}^{(2)}-X_{13}^{(1)}X_{22}^{(1)}X_{31}^{(2)}+X_{12}^{(1)}X_{23}^{(1)}X_{31}^{(2)}+X_{13}^{(1)}X_{21}^{(1)}X_{32}^{(2)}-\\
	  &\ \ \ -X_{13}^{(1)}X_{22}^{(2)}X_{31}^{(1)}+X_{12}^{(1)}X_{23}^{(2)}X_{31}^{(1)}+X_{13}^{(1)}X_{21}^{(2)}X_{32}^{(1)}-X_{13}^{(2)}X_{22}^{(1)}X_{31}^{(1)}+\\
	  &\ \ \ +X_{12}^{(2)}X_{23}^{(1)}X_{31}^{(1)}+X_{13}^{(2)}X_{21}^{(1)}X_{32}^{(1)},\\
p_{35}&=-X_{13}^{(1)}X_{22}^{(2)}X_{31}^{(2)}+X_{12}^{(1)}X_{23}^{(2)}X_{31}^{(2)}+X_{13}^{(1)}X_{21}^{(2)}X_{32}^{(2)}-X_{13}^{(2)}X_{22}^{(1)}X_{31}^{(2)}+\\
	  &\ \ \ +X_{12}^{(2)}X_{23}^{(1)}X_{31}^{(2)}++X_{13}^{(2)}X_{21}^{(1)}X_{32}^{(2)}-X_{13}^{(2)}X_{22}^{(2)}X_{31}^{(1)}+X_{12}^{(2)}X_{23}^{(2)}X_{31}^{(1)}+\\
      &\ \ \ +X_{13}^{(2)}X_{21}^{(2)}X_{32}^{(1)},\\
p_{36}&=-X_{13}^{(2)}X_{22}^{(2)}X_{31}^{(2)}+X_{12}^{(2)}X_{23}^{(2)}X_{31}^{(2)}+X_{13}^{(2)}X_{21}^{(2)}X_{32}^{(2)}.\\
\end{align*} 
Analogously to previous proof we have that the decomposition in irreducibles components is

$$\gts=\bigcup_{i=1}^{22}C_i,$$
where

\begin{align*}
C_1   &=V\left(X_{11}^{(1)},X_{11}^{(2)},X_{22}^{(1)},X_{22}^{(2)}, X_{12}^{(1)},X_{12}^{(2)},X_{33}^{(1)},X_{33}^{(2)},X_{23}^{(1)},X_{23}^{(2)},X_{13}^{(1)},X_{13}^{(2)}\right),\\
C_2   &=V\bigg(X_{11}^{(1)},X_{11}^{(2)},X_{22}^{(1)},X_{22}^{(2)}, X_{12}^{(1)},X_{12}^{(2)},X_{33}^{(1)},X_{33}^{(2)}-X_{23}^{(1)}X_{32}^{(1)},X_{32}^{(2)},X_{23}^{(2)},\\
	  &\ \ \ \ \ \ \ \ X_{13}^{(1)},X_{13}^{(2)}\bigg),\\
C_3   &=V\left(X_{11}^{(1)},X_{11}^{(2)},X_{22}^{(1)},X_{22}^{(2)}, X_{12}^{(1)},X_{12}^{(2)},X_{33}^{(1)},X_{33}^{(2)},X_{32}^{(1)},X_{32}^{(2)},X_{13}^{(1)},X_{13}^{(2)}\right),\\
C_4   &=V\bigg(X_{11}^{(1)},X_{11}^{(2)},X_{22}^{(1)},X_{22}^{(2)}, X_{12}^{(1)},X_{12}^{(2)},X_{33}^{(1)},X_{33}^{(2)}-X_{23}^{(1)}X_{32}^{(1)}-X_{13}^{(1)}X_{31}^{(1)},\\
       &\ \ \ \ \ \ \ \ -X_{23}^{(2)}X_{32}^{(1)}+ X_{13}^{(1)}X_{21}^{(1)}X_{32}^{(1)}-X_{13}^{(1)}X_{31}^{(2)},X_{32}^{(2)},X_{21}^{(2)},X_{13}^{(2)}\bigg),\\
C_5   &=V\bigg(X_{11}^{(1)},X_{11}^{(2)},X_{22}^{(1)},X_{22}^{(2)}, X_{12}^{(1)},X_{12}^{(2)},X_{33}^{(1)},X_{33}^{(2)}-X_{23}^{(1)}X_{32}^{(1)}-X_{13}^{(1)}X_{31}^{(1)},\\
       &\ \ \ \ \ \ \ \ \ \ \ -X_{23}^{(1)}X_{32}^{(2)}-X_{13}^{(1)}X_{31}^{(2)} ,-X_{23}^{(2)}+ X_{13}^{(1)}X_{21}^{(1)},X_{21}^{(2)},X_{13}^{(2)}\bigg),\\
C_6   &=V\bigg(X_{11}^{(1)},X_{11}^{(2)},X_{22}^{(1)},X_{22}^{(2)}, X_{12}^{(1)},X_{12}^{(2)},X_{33}^{(1)},X_{33}^{(2)}-X_{13}^{(1)}X_{31}^{(1)},X_{31}^{(2)},X_{32}^{(1)},\\
       &\ \ \ \ \ \ \ \ X_{32}^{(2)},X_{13}^{(2)}\bigg),\\
C_7   &=V\bigg(X_{11}^{(1)},X_{11}^{(2)},X_{22}^{(1)},X_{22}^{(2)}, X_{12}^{(1)},X_{12}^{(2)},X_{33}^{(1)},X_{33}^{(2)}-X_{23}^{(1)}X_{32}^{(1)}-X_{13}^{(1)}X_{31}^{(1)},\\
       &\ \ \ \ \ \ \ \ \ X_{23}^{(1)}X_{32}^{(2)}+X_{23}^{(2)}X_{32}^{(1)}+X_{13}^{(1)}X_{31}^{(2)}+X_{13}^{(2)}X_{31}^{(1)},X_{23}^{(2)}X_{32}^{(2)}+X_{13}^{(2)}X_{31}^{(2)},\\
       &\ \ \ \ \ \ \ \ X_{21}^{(1)},X_{21}^{(2)}\bigg),\\ 
C_8   &=V\bigg(X_{11}^{(1)},X_{11}^{(2)},X_{22}^{(1)},X_{22}^{(2)}, X_{12}^{(1)},X_{12}^{(2)},X_{33}^{(1)},X_{33}^{(2)}-X_{23}^{(1)}X_{32}^{(1)}-X_{13}^{(1)}X_{31}^{(1)},\\
       &\ \ \ \ \ \ \ \ \ X_{23}^{(2)}X_{32}^{(1)}+X_{13}^{(2)}X_{31}^{(1)},X_{21}^{(1)}X_{32}^{(1)}-X_{31}^{(2)},X_{32}^{(2)},X_{21}^{(2)}\bigg),\\
C_9   &=V\bigg(X_{11}^{(1)},X_{11}^{(2)},X_{22}^{(1)},X_{22}^{(2)}, X_{12}^{(1)},X_{12}^{(2)},X_{33}^{(1)},X_{33}^{(2)},X_{31}^{(1)},X_{31}^{(2)},X_{32}^{(1)},X_{32}^{(2)}\bigg),\\
C_{10}&=V\bigg(X_{11}^{(1)},X_{11}^{(2)},X_{22}^{(1)}, X_{22}^{(2)}-X_{12}^{(1)}X_{21}^{(1)},X_{21}^{(2)},X_{12}^{(2)},X_{33}^{(1)},X_{33}^{(2)}-X_{13}^{(1)}X_{31}^{(1)},\\
       &\ \ \ \ \ \ \ \ X_{23}^{(1)},X_{23}^{(2)}- X_{13}^{(1)}X_{21}^{(1)},X_{31}^{(2)},X_{13}^{(2)}\bigg),\\
C_{11}&=V\bigg(X_{11}^{(1)},X_{11}^{(2)},X_{22}^{(1)}, X_{22}^{(2)}-X_{12}^{(1)}X_{21}^{(1)},X_{21}^{(2)},X_{12}^{(2)},X_{33}^{(1)},\\
       &\ \ \ \ \ \ \ \ X_{33}^{(2)}-X_{23}^{(1)}X_{32}^{(1)}-X_{13}^{(1)}X_{31}^{(1)},X_{32}^{(2)}-X_{12}^{(1)}X_{31}^{(1)}, X_{23}^{(2)}-X_{13}^{(1)}X_{21}^{(1)},X_{31}^{(2)},\\
       &\ \ \ \ \ \ \ \ X_{13}^{(2)}\bigg),\\
C_{12}&=V\bigg(X_{11}^{(1)},X_{11}^{(2)},X_{22}^{(1)}, X_{22}^{(2)}-X_{12}^{(1)}X_{21}^{(1)},X_{21}^{(2)},X_{12}^{(2)},X_{33}^{(1)},X_{33}^{(2)}-X_{13}^{(1)}X_{31}^{(1)},\\
       &\ \ \ \ \ \ \ \ X_{32}^{(1)},X_{32}^{(2)}-X_{12}^{(1)}X_{31}^{(1)},X_{31}^{(2)},X_{13}^{(2)}\bigg),\\
C_{13}&=V\bigg(X_{11}^{(1)},X_{11}^{(2)},X_{22}^{(1)}, X_{22}^{(2)}-X_{12}^{(1)}X_{21}^{(1)},X_{21}^{(2)},X_{12}^{(2)},X_{33}^{(1)},X_{33}^{(2)},X_{13}^{(1)},X_{23}^{(1)},\\
       &\ \ \ \ \ \ \ \ X_{23}^{(2)},X_{13}^{(2)}\bigg),\\
C_{14}&=V\bigg(X_{11}^{(1)},X_{11}^{(2)},X_{22}^{(1)},X_{21}^{(1)},X_{21}^{(2)},X_{12}^{(2)},X_{33}^{(1)},X_{33}^{(2)}-X_{23}^{(1)}X_{32}^{(1)}-X_{13}^{(1)}X_{31}^{(1)},\\
       &\ \ \ \ \ \ \ \ \ X_{23}^{(1)}X_{32}^{(2)}- X_{12}^{(1)}X_{23}^{(1)}X_{31}^{(1)}+X_{13}^{(2)}X_{31}^{(1)},X_{31}^{(2)},X_{23}^{(2)},X_{22}^{(2)}\bigg),\\
C_{15}&=V\bigg(X_{11}^{(1)},X_{11}^{(2)},X_{22}^{(1)},X_{21}^{(1)},X_{21}^{(2)},X_{12}^{(2)},X_{33}^{(1)},X_{33}^{(2)}-X_{23}^{(1)}X_{32}^{(1)}-X_{13}^{(1)}X_{31}^{(1)},\\
       &\ \ \ \ \ \ \ \ \ X_{23}^{(1)}X_{32}^{(2)}+ X_{13}^{(1)}X_{31}^{(2)},X_{12}^{(1)}X_{23}^{(1)}-X_{13}^{(2)},X_{23}^{(2)},X_{22}^{(2)}\bigg),\\
C_{16}&=V\bigg(X_{11}^{(1)},X_{11}^{(2)},X_{22}^{(1)},X_{21}^{(1)},X_{21}^{(2)},X_{12}^{(2)},X_{33}^{(1)},X_{33}^{(2)}-X_{23}^{(1)}X_{32}^{(1)}-X_{13}^{(1)}X_{31}^{(1)},\\
       &\ \ \ \ \ \ \ \ \ X_{23}^{(2)}X_{32}^{(1)}+X_{13}^{(2)}X_{31}^{(1)},X_{32}^{(2)}-X_{12}^{(1)}X_{31}^{(1)},X_{31}^{(2)},X_{22}^{(2)}\bigg),\\
C_{17}&=V\bigg(X_{11}^{(1)},X_{11}^{(2)},X_{22}^{(1)}, X_{22}^{(2)}-X_{12}^{(1)}X_{21}^{(1)},X_{21}^{(2)},X_{12}^{(2)},X_{33}^{(1)},X_{33}^{(2)},X_{31}^{(1)},X_{32}^{(1)},\\
       &\ \ \ \ \ \ \ \ X_{32}^{(2)},X_{31}^{(2)}\bigg),\\
C_{18}&=V\left(X_{11}^{(1)},X_{11}^{(2)},X_{22}^{(1)},X_{22}^{(2)}, X_{21}^{(1)},X_{21}^{(2)},X_{33}^{(1)},X_{33}^{(2)},X_{32}^{(1)},X_{32}^{(2)},X_{31}^{(1)},X_{31}^{(2)}\right),\\
C_{19}&=V\bigg(X_{11}^{(1)},X_{11}^{(2)},X_{22}^{(1)},X_{22}^{(2)}, X_{21}^{(1)},X_{21}^{(2)},X_{33}^{(1)},X_{33}^{(2)}-X_{32}^{(1)}X_{23}^{(1)},X_{23}^{(2)},X_{32}^{(2)},\\
       &\ \ \ \ \ \ \ \ X_{31}^{(1)},X_{31}^{(2)}\bigg),\\
C_{20}&=V\left(X_{11}^{(1)},X_{11}^{(2)},X_{22}^{(1)},X_{22}^{(2)}, X_{21}^{(1)},X_{21}^{(2)},X_{33}^{(1)},X_{33}^{(2)},X_{23}^{(1)},X_{23}^{(2)},X_{31}^{(1)},X_{31}^{(2)}\right),\\
C_{21}&=V\bigg(X_{11}^{(1)},X_{11}^{(2)},X_{22}^{(1)},X_{22}^{(2)}, X_{21}^{(1)},X_{21}^{(2)},X_{33}^{(1)},X_{33}^{(2)}-X_{31}^{(1)}X_{13}^{(1)},X_{13}^{(2)},X_{23}^{(1)},\\
       &\ \ \ \ \ \ \ \ X_{23}^{(2)},X_{31}^{(2)}\bigg),\\
C_{22}&=V\bigg(X_{11}^{(1)},X_{11}^{(2)},X_{22}^{(1)},X_{22}^{(2)}, X_{21}^{(1)},X_{21}^{(2)},X_{33}^{(1)},X_{33}^{(2)},X_{13}^{(1)},X_{13}^{(2)},X_{23}^{(1)},X_{23}^{(2)}\bigg).
\end{align*}
In this decomposition for $\gts$, each irreducible component has dimension $6$ and therefore the Gelfand-Tsetlin variety is equidimensional with 
$$\dim\gts=6.$$

\end{proof}


\subsection{A first decomposition} 

Now we will exhibit a decomposition of the Gelfand-Tsetlin variety $\gts$ for Yangians $Y_p(\gl_n)$, which as we will see, such decomposition is not in irreducible components.


\begin{proposition}
\label{decomp1}
For $Y_p(\gl_n)$ 
$$\gts=\bigcup_{s=1}^{p+1}\gts_s,$$
where
$$\gts_1:=V\left(\left\{p_{ij}\right\}_{\substack{i=1,2\\ j=1,2,\ldots,p
}}\cup\left\{X_{12}^{(i)}\right\}_{i=1}^{p}\cup\left\{p_{ij}\right\}_{\substack{i=3,4,\ldots,n\\ j=1,2,\ldots,ip
}}\right)\subset \k^{n^2p},$$
for $2\leq s\leq p,$
$$\gts_s:=V\left(\left\{p_{ij}\right\}_{\substack{i=1,2\\ j=1,2,\ldots,p
}}\cup\left\{X_{12}^{(i)}\right\}_{i=s}^{p}\cup\left\{X_{21}^{(i)}\right\}_{i=p-(s-2)}^{p}\cup\left\{p_{ij}\right\}_{\substack{i=3,4,\ldots,n\\ j=1,2,\ldots,ip
}}\right)\subset \k^{n^2p}$$
and
$$\gts_{p+1}:=V\left(\left\{p_{ij}\right\}_{\substack{i=1,2\\ j=1,2,\ldots,p
}}\cup\left\{X_{21}^{(i)}\right\}_{i=1}^{p}\cup\left\{p_{ij}\right\}_{\substack{i=3,4,\ldots,n\\ j=1,2,\ldots,ip
}}\right)\subset \k^{n^2p}.$$
\end{proposition}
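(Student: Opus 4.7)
The plan is to reduce the statement to a tractable combinatorial condition on the generating functions of the variables $X_{12}^{(t)}$ and $X_{21}^{(t)}$, since these are the only variables on which the polynomials $p_{2,p+i}$ ($i=1,\ldots,p$) depend. All the other polynomials $p_{ij}$ appearing in the definition of $\gts$ via Corollary~\eqref{equivGTsYp(n)} are already included verbatim in the definition of each $\gts_s$, so the claim reduces to showing that, on the affine subspace with coordinates $X_{12}^{(t)}, X_{21}^{(t)}$, the system $\{p_{2,p+i}=0\}_{i=1}^{p}$ cuts out exactly the union of the vanishing loci defining $\gts_1,\ldots,\gts_{p+1}$.

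The key technical step would be to introduce
$$A(z):=\sum_{t=1}^{p}X_{12}^{(t)}z^{t},\qquad B(z):=\sum_{t=1}^{p}X_{21}^{(t)}z^{t}$$
in $\k[z]$ and to observe directly from the formula $p_{2,p+i}=\sum_{t=i}^{p}X_{12}^{(t)}X_{21}^{(p+i-t)}$ that $p_{2,p+i}$ is the coefficient of $z^{p+i}$ in the product $A(z)B(z)$. Consequently, the vanishing of $p_{2,p+i}$ for every $i=1,\ldots,p$ becomes equivalent to the single inequality $\deg A(z)B(z) \leq p$.

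For the inclusion $\bigcup_s \gts_s \subseteq \gts$ I would check that in each $\gts_s$ the polynomials $A$ and $B$ satisfy $\deg(AB)\leq p$: in $\gts_1$ one has $A=0$; in $\gts_{p+1}$ one has $B=0$; and in $\gts_s$ for $2\leq s\leq p$ the imposed vanishing forces $\deg A\leq s-1$ and $\deg B\leq p-s+1$. For the reverse inclusion, given any point of $\gts$, if $A=0$ or $B=0$ then the point lies in $\gts_1$ or $\gts_{p+1}$ respectively; otherwise $\alpha:=\deg A\geq 1$ and $\beta:=\deg B\geq 1$ satisfy $\alpha+\beta\leq p$, forcing $\alpha\leq p-1$. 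Setting $s:=\alpha+1\in\{2,\ldots,p\}$, the bound $\deg A\leq s-1$ gives $X_{12}^{(i)}=0$ for $i=s,\ldots,p$, while $\beta\leq p-s+1$ gives $X_{21}^{(i)}=0$ for $i=p-s+2,\ldots,p$, placing the point in $\gts_s$.

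The only nontrivial step is the translation of the system of $p$ bilinear equations into the single degree inequality $\deg(AB)\leq p$; everything else is bookkeeping, and the argument is uniform in $n$ because no polynomials $p_{ij}$ with $i\geq 3$ participate in the comparison (they are simply carried along identically on both sides of the claimed equality). In particular, no combinatorial control over the polynomials coming from the $n\times n$ quantum determinant is needed, which is what makes the statement feasible despite the complexity exhibited in Lemma~\eqref{GTsPolyn10}.
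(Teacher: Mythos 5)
Your proof is correct, and it takes a cleaner route than the paper's. Both arguments rest on the same preliminary observation: every generator of $\gts$ other than $p_{2\,p+1},\dots,p_{2\,2p}$ appears verbatim in each $\gts_s$, so the whole proposition reduces to showing that the simultaneous vanishing of these $p$ bilinear polynomials is equivalent to membership in the union of the linear loci. Where you differ is in how that equivalence is established. The paper argues recursively: it starts from $p_{2\,2p}(A)=a_{12}^{(p)}a_{21}^{(p)}=0$, splits into cases, and then peels off one leading coefficient at a time using the polynomials $p_{2\,s+j-2}$, with a fair amount of index bookkeeping to isolate the surviving term $a_{12}^{(s-1)}a_{21}^{(j-1)}$ at each stage. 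You instead encode the variables into $A(z)=\sum_t X_{12}^{(t)}z^t$ and $B(z)=\sum_t X_{21}^{(t)}z^t$, identify $p_{2\,p+i}$ with the coefficient of $z^{p+i}$ in $A(z)B(z)$ (which is exactly right: the constraint $1\leq p+i-t\leq p$ yields the range $t=i,\dots,p$), and replace the entire induction by the single identity $\deg(AB)=\deg A+\deg B$ in the integral domain $\k[z]$; the case analysis on $(\deg A,\deg B)$ with $s=\deg A+1$ then reproduces the subvarieties $\gts_s$ precisely, including the index ranges $i\geq s$ and $i\geq p-(s-2)$. What your formulation buys is transparency and brevity, and it makes visible that the decomposition is governed solely by the pair of degrees; what the paper's explicit recursion buys is a template that it reuses almost verbatim in the harder trilinear situation of Lemma \eqref{lemmarecursN1} and Proposition \eqref{firstdecompGTs1}, where no such clean degree identity is available.
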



\begin{proof}
Clearly $\gts_s\subseteq\gts$ for each $s=1,2,\ldots,p+1$, therefore
$$\gts\supseteq\bigcup_{s=1}^{p+1}\gts_s.$$
Now, for 
$$\gts\subseteq\bigcup_{s=1}^{p+1}\gts_s,$$
we consider 
$A=\left(a_{ij}^{(t)}\right)_{\substack{i,j=1,2,\ldots,n\\ t=1,2,\ldots,p}}\in\gts$. By Proposition \eqref{equivGTs} or Corollary \eqref{equivGTsYp(n)} $p_{2\,2p}(A)=a_{12}^{(p)}a_{21}^{(p)}=0,$
hence 
$$a_{12}^{(p)}=0\ \ \mbox{or}\ \ a_{21}^{(p)}=0.$$
Therefore, if $p=1$ then
$$\gts=\gts_1\cup\gts_2.$$

When $p>1$ as a consequence of the Proposition \eqref{equivGTs} or Corollary \eqref{equivGTsYp(n)}
 $$p_{2\,2p-1}(A)=a_{12}^{(p-1)}a_{21}^{(p)}+a_{12}^{(p)}a_{21}^{(p-1)}=0,$$
thus
 $$a_{12}^{(p)}=a_{12}^{(p-1)}=0\ \ \mbox{or}\ \ a_{12}^{(p)}=a_{21}^{(p)}=0\ \ \mbox{or}\ \ a_{21}^{(p)}=a_{21}^{(p-1)}=0.$$
Therefore, if $p=2$ then
 $$\gts=\gts_1\cup\gts_2\cup\gts_3.$$

Moreover, for the case that $p>2$. Since that $A\in\gts_1$ whenever $a_{12}^{(t)}=0$ for all $t=1,2,\ldots,p$ and $A\in\gts_{p+1}$ whenever $a_{21}^{(t)}=0$ for all $t=1,2,\ldots,p$, then we can assume that, there exists $s\in\left\{2,3,\ldots,p\right\}$ such that 
 $$a_{12}^{(t)}=0,\ \ \forall t\geq s\ \ \ \mbox{and}\ \ \ a_{12}^{(s-1)}\neq0.$$
By Proposition \eqref{equivGTs} or Corollary \eqref{equivGTsYp(n)} follows
 $$p_{p\,p+s-1}(A)=a_{12}^{(s-1)}a_{21}^{(p)}+\sum_{t=s}^{p}a_{12}^{(t)}a_{21}^{(p+s-1-t)}=0$$
and thus $a_{21}^{(p)}=0$, which implies that, if $s=2$ then $A\in\gts_2$.
  
Continuing with the same argument for $3\leq s\leq p$, when  
 $$a_{21}^{(t)}=0,\ \ \forall t\geq j\ \ \ \mbox{for some}\ \ \ j\in\left\{p-s+3,p-s+4,\ldots,p\right\}.$$
Since that $1\leq s-p+j-2\leq p$, we have the polynomial
 \begin{align*}
 p_{p\,s+j-2}&=p_{p\,p+s-p+j-2}=\sum_{t=s-p+j-2}^{p}X_{12}^{(t)}X_{21}^{(p+s-p+j-2-t)}\\
            &= \sum_{m=j}^{p}X_{12}^{(s+j-2-m)}X_{21}^{(m)}+X_{12}^{(s-1)}X_{21}^{(j-1)}+\sum_{t=s}^{p}X_{12}^{(t)}X_{21}^{(s+j-2-t)},
 \end{align*}
hence, of the Proposition \eqref{equivGTs} or Corollary \eqref{equivGTsYp(n)},
 \begin{align*}
 p_{p\,s+j-2}(A)&= \underbrace{\sum_{m=j}^{p}a_{12}^{(s+j-2-m)}a_{21}^{(m)}}_{=0}+a_{12}^{(s-1)}a_{21}^{(j-1)}+\underbrace{\sum_{t=s}^{p}a_{12}^{(t)}a_{21}^{(s+j-2-t)}}_{=0}=0,
 \end{align*}
thus $a_{21}^{(j-1)}=0$ and we can conclude $A\in\gts_s,\ \ 3\leq s\leq p.$

\end{proof}


\subsubsection{A decomposition for the first subvariety}

Note that as 
 $$\gts_1=V\left(\left\{p_{ij}\right\}_{\substack{i=1,2\\ j=1,2,\ldots,p}}\cup\left\{X_{12}^{(i)}\right\}_{i=1}^{p}\cup\left\{p_{ij}\right\}_{\substack{i=3,4,\ldots,n\\ j=1,2,\ldots,ip}}\right)\subset \k^{n^2p},$$
so by Corollary \eqref{equivGTsYp(n)}, we can see that variety of the following form
 $$\gts_1=V\left(\textbf{W}\cup\left\{p_{ij}\right\}_{\substack{i=3,4,\ldots,n\\ j=1,2,\ldots,ip}}\right)\subset \k^{n^2p},$$
where
 $$\textbf{W}:=\bigcup_{i=1}^{p}\left\{X_{11}^{(i)},X_{12}^{(i)},X_{22}^{(i)}\right\}.$$


\vspace{0.3cm}

\noindent When $n=3$, we will call $\gts_1$ \textbf{weak version of the Gelfand-Tsetlin variety} $\gts$ for $Y_p(\gl_3)$. In level $p=1$, this subvariety coincides with the weak version in \cite{gbm}.


\vspace{0.7cm}

Now, we going to looking for a decomposition (it is not necessarily in irreducible components) to $\gts_1$. With this aim, we will need of the following lemma:


\vspace{0.4cm}

\begin{lemma}
\label{lemmarecursN1}
Suppose $A=\left(a_{rs}^{(t)}\right)_{\substack{r,s=1,2,\ldots,n\\ t=1,2,\ldots,p
}}\in\gts_1$ and $1\leq i\leq p-2$

\begin{enumerate}

\item
\begin{enumerate}
	  
\item If $a_{13}^{(t)}=0,\ \forall t\geq p-i$, then

$$a_{13}^{(p-i-1)}=0\ \ \mbox{or}\ \ a_{21}^{(p)}=0\ \ \mbox{or}\ \ a_{32}^{(p)}=0.$$
	  
\item If $1\leq j\leq i$, $a_{13}^{(t)}=0,\ \forall t\geq p-i+j$ and $a_{21}^{(t)}=0,\ \forall t\geq p-j+1$, then 
 
 $$a_{13}^{(p-i+j-1)}=0\ \ \mbox{or}\ \ a_{21}^{(p-j)}=0\ \ \mbox{or}\ \ a_{32}^{(p)}=0.$$
    
\item If $a_{21}^{(t)}=0,\ \forall t\geq p-i$, then

$$a_{13}^{(p)}=0\ \ \mbox{or}\ \ a_{21}^{(p-i-1)}=0\ \ \mbox{or}\ \ a_{32}^{(p)}=0.$$
    
\end{enumerate}

\item
\begin{enumerate}
	
\item If $1\leq j\leq i$, $a_{21}^{(t)}=0,\ \forall t\geq p-i+j$ and $a_{32}^{(t)}=0,\ \forall t\geq p-j+1$, then
$$a_{13}^{(p)}=0\ \ \mbox{or}\ \ a_{21}^{(p-i+j-1)}=0\ \ \mbox{or}\ \ a_{32}^{(p-j)}=0.$$
	
\item If $a_{32}^{(t)}=0,\ \forall t\geq p-i$, then
$$a_{13}^{(p)}=0\ \ \mbox{or}\ \ a_{21}^{(p)}=0\ \ \mbox{or}\ \ a_{32}^{(p-i-1)}=0.$$
\end{enumerate}

\item If $1\leq j\leq i$, $a_{13}^{(t)}=0,\ \forall t\geq p-i+j$ and $a_{32}^{(t)}=0,\ \forall t\geq p-j+1$, then
$$a_{13}^{(p-i+j-1)}=0\ \ \mbox{or}\ \ a_{21}^{(p)}=0\ \ \mbox{or}\ \ a_{32}^{(p-j)}=0.$$

\end{enumerate}

\end{lemma}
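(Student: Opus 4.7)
My plan is to exploit the fact that in $\gts_1$ the variables $X_{11}^{(t)}, X_{12}^{(t)}, X_{22}^{(t)}$ all vanish identically, so every polynomial $p_{3\,2p+m}$ from Proposition \eqref{equivGTs} collapses to a pure ``triple-product'' identity. Concretely, substituting $a_{12}^{(t)} = a_{22}^{(t)} = 0$ for all $t$ into $p_{3\,2p+m}$ kills the first two terms of every summand and leaves, for each $m = 1, 2, \ldots, p$,
$$\sum_{r=m}^{p}\sum_{s=p+m-r}^{p} a_{13}^{(r)}\, a_{21}^{(s)}\, a_{32}^{(2p+m-r-s)} = 0.$$
The index set of this sum is exactly the lattice of triples $(r,s,u)$ with $r+s+u=2p+m$ and $1 \leq r,s,u \leq p$, i.e.\ an antidiagonal slice of the cube $[1,p]^3$.

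The strategy for each of the six claims is the same: pick $m = p - i - 1$ (which is $\geq 1$ precisely because $i \leq p - 2$, and $\leq p$), look at the corresponding identity, and observe that the three vanishing hypotheses at hand cut out an upper box in $(r,s,u)$-space. The crucial uniformity is that in every one of the six cases the conclusion picks out a corner triple whose coordinates sum to exactly $3p - i - 1 = 2p + m$, so the corner lies on the correct antidiagonal. For instance, for part 1(a) one has $r \leq p - i - 1$ from the hypothesis while the sum forces $r \geq m = p - i - 1$, so $r = p-i-1$; then $s \geq p + m - r = p$ forces $s = p$, hence $u = p$, and the sole surviving term gives $a_{13}^{(p-i-1)}a_{21}^{(p)}a_{32}^{(p)} = 0$. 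Parts 1(c) and 2(b) are perfectly analogous, using just one hypothesis to force a single corner.

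For the three two-hypothesis cases 1(b), 2(a), 3, the same index chase produces a single surviving triple, but now through a two-sided pinch. For example, in 3 one has $r \leq p-i+j-1$ and $u = 2p+m-r-s \leq p-j$; the first bound combined with $s \leq p$ and $u \leq p-j$ forces $r \geq p+m+j-s \geq p-i+j-1$, hence $r = p-i+j-1$; plugging back yields $s = p$, $u = p-j$, and the identity becomes $a_{13}^{(p-i+j-1)}a_{21}^{(p)}a_{32}^{(p-j)} = 0$. Cases 1(b) and 2(a) follow by the same kind of pinch with the roles of $r, s, u$ permuted.

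The only real obstacle is the bookkeeping: verifying in each of the six sub-cases that the intersection of the two or three linear bounds (coming from the vanishing hypotheses and from the summation range of $p_{3\,2p+m}$) with the antidiagonal slice $\{r+s+u = 3p-i-1\} \cap [1,p]^3$ consists of precisely one lattice point, so that no cancellation or additional terms appear. Once this is checked case by case, the lemma follows immediately from the fact that $p_{3\,2p+m}(A) = 0$ for $A \in \gts_1$.
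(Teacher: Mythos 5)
Your proposal is correct and follows essentially the same route as the paper: both evaluate $p_{3\,2p+m}$ with $m=p-i-1$ (i.e.\ $p_{3\,3p-i-1}$) on $A\in\gts_1$, use the vanishing of $X_{11}^{(t)},X_{12}^{(t)},X_{22}^{(t)}$ to reduce it to the triple-product sum, and then let the vanishing hypotheses pinch the index set down to a single corner term. Your lattice-point formulation of the index set $\{r+s+u=3p-i-1\}\cap[1,p]^3$ is just a tidier packaging of the same index chase the paper carries out sum by sum.
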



\begin{proof}
We will prove only the statement for the case $(1)$, because the cases $(2)$ and $(3)$ can be treated analogously.

Since that $1\leq i\leq p-2$, then $1\leq p-i-1\leq p-2$. We can consider 
\begin{align*}
p_{3\,3p-i-1}&=\sum_{r=p-i-1}^{p}\sum_{s=2p-i-1-r}^{p} \biggl\{-X_{13}^{(r)}X_{22}^{(s)}X_{31}^{(3p-i-1-r-s)}+\\
&\ \ \ +X_{12}^{(r)}X_{23}^{(s)}X_{31}^{(3p-i-1-r-s)}+X_{13}^{(r)}X_{21}^{(s)}X_{32}^{(3p-i-1-r-s)}\biggr\},
\end{align*}
as $A\in\gts_1$, by Corollary \eqref{equivGTsYp(n)} we have the equation \ $p_{3\,3p-i-1}(A)=0$

\begin{align*}
0&=\sum_{r=p-i-1}^{p}\sum_{s=2p-i-1-r}^{p} \biggl\{\underbrace{-a_{13}^{(r)}a_{22}^{(s)}a_{31}^{(3p-i-1-r-s)}+a_{12}^{(r)}a_{23}^{(s)}a_{31}^{(3p-i-1-r-s)}}_{=0}+\\
&\ \ \ +a_{13}^{(r)}a_{21}^{(s)}a_{32}^{(3p-i-1-r-s)}\biggr\}\\
	       0&=\sum_{r=p-i-1}^{p}\sum_{s=2p-i-1-r}^{p} a_{13}^{(r)}a_{21}^{(s)}a_{32}^{(3p-i-1-r-s)}\\
\end{align*}

\begin{enumerate}[$(a)$]
	  
	  \item If $a_{13}^{(t)}=0,\ \forall t\geq p-i$. Then
\begin{align*}
p_{3\, 3p-i-1}(A)&= a_{13}^{(p-i-1)}a_{21}^{(p)}a_{32}^{(p)}+\underbrace{\sum_{r=p-i}^{p}\sum_{s=2p-i-1-r}^{p} a_{13}^{(r)}a_{21}^{(s)}a_{32}^{(3p-i-1-r-s)}}_{=0}\\
                0&=a_{13}^{(p-i-1)}a_{21}^{(p)}a_{32}^{(p)},
\end{align*}
hence

 $$a_{13}^{(p-i-1)}=0\ \ \ \mbox{or}\ \ \ a_{21}^{(p)}=0\ \ \ \mbox{or}\ \ \ a_{32}^{(p)}=0.$$
	  
	  \item If $1\leq j\leq i$, $a_{13}^{(t)}=0,\ \forall t\geq p-i+j$ and $a_{21}^{(t)}=0, \forall t\geq p-j+1$. Then 
 \begin{align*}
 0 &= \underbrace{\sum_{r=p-i-1}^{p-i+j-2}\sum_{s=2p-i-1-r}^{p} a_{13}^{(r)}a_{21}^{(s)}a_{32}^{(3p-i-1-r-s)}}_{=0}+\sum_{s=p-j}^{p} a_{13}^{(p-i+j-1)}a_{21}^{(s)}a_{32}^{(2p-j-s)}+\\
                 &\ \ \ +\underbrace{\sum_{r=p-i+j}^{p}\sum_{s=2p-i-1-r}^{p} a_{13}^{(r)}a_{21}^{(s)}a_{32}^{(3p-i-1-r-s)}}_{=0}\\
                0&=\sum_{s=p-j}^{p} a_{13}^{(p-i+j-1)}a_{21}^{(s)}a_{32}^{(2p-j-s)}\\                                         
                0&= a_{13}^{(p-i+j-1)}a_{21}^{(p-j)}a_{32}^{(p)}+\underbrace{\sum_{s=p-j+1}^{p} a_{13}^{(p-i+j-1)}a_{21}^{(s)}a_{32}^{(2p-j-s)}}_{=0}\\ 
                0&=a_{13}^{(p-i+j-1)}a_{21}^{(p-j)}a_{32}^{(p)}.
 \end{align*}
Thus
\[a_{13}^{(p-i+j-1)}=0\ \ \mbox{or}\ \ a_{21}^{(p-j)}=0\ \ \mbox{or}\ \ a_{32}^{(p)}=0.\]
    
    \item If $a_{21}^{(t)}=0,\ \ \forall t\geq p-i$. Then
\begin{align*}
                0&= \underbrace{\sum_{r=p-i-1}^{p-1}\sum_{s=2p-i-1-r}^{p} a_{13}^{(r)}a_{21}^{(s)}a_{32}^{(3p-i-1-r-s)}}_{=0}+\sum_{s=p-i-1}^{p} a_{13}^{(p)}a_{21}^{(s)}a_{32}^{(2p-i-1-s)}\\
                0&=\sum_{s=p-i-1}^{p} a_{13}^{(p)}a_{21}^{(s)}a_{32}^{(2p-i-1-s)}\\
                0&=a_{13}^{(p)}a_{21}^{(p-i-1)}a_{32}^{(p)}+ \underbrace{\sum_{s=p-i}^{p}a_{13}^{(p)}a_{21}^{(s)}a_{32}^{(2p-i-1-s)}}_{=0}\\
                0&=a_{13}^{(p)}a_{21}^{(p-i-1)}a_{32}^{(p)}.
\end{align*}
Therefore
$$a_{13}^{(p)}=0\ \ \ \mbox{or}\ \ \ a_{21}^{(p-i-1)}=0\ \ \ \mbox{or}\ \ \ a_{32}^{(p)}=0.$$
    
\end{enumerate}

\end{proof}


\begin{notation}
\label{subvarWeak}
For $Y_p({\gl_n})$ we will consider the following subvarieties of $\gts_1$:
$$\wk^p:=V\left(\textbf{W}\cup\left\{p_{3j}\right\}_{j=1}^{3p-3}\cup\left\{X_{32}^{(p)},X_{21}^{(p)},X_{13}^{(p)}\right\}\cup\left\{p_{ij}\right\}_{\substack{i=4,5,\ldots,n\\ j=1,2,\ldots,ip}}\right).$$

$$\wk_1:=\bigcup_{s=1}^{p+1}\wk_{1s}.$$
where
\begin{itemize}

  \item
$$\wk_{11}:=V\left(\textbf{W}\cup\left\{p_{3j}\right\}_{j=1}^{2p}\cup\left\{X_{13}^{(t)}\right\}_{t=1}^{p}\cup\left\{p_{ij}\right\}_{\substack{i=4,5,\ldots,n\\ j=1,2,\ldots,ip}}\right).$$

  \item For each $s=2,3,\ldots,p$,
$$\wk_{1s}:=V\left(\textbf{W}\cup\left\{p_{3j}\right\}_{j=1}^{2p}\cup\left\{X_{13}^{(t)}\right\}_{t=s}^{p}\cup\left\{X_{21}^{(t)}\right\}_{t=p-(s-2)}^{p}\cup\left\{p_{ij}\right\}_{\substack{i=4,5,\ldots,n\\ j=1,2,\ldots,ip}}\right).$$

  \item 
$$\wk_{1\,p+1}:=V\left(\textbf{W}\cup\left\{p_{3j}\right\}_{j=1}^{2p}\cup\left\{X_{21}^{(t)}\right\}_{t=1}^{p}\cup\left\{p_{ij}\right\}_{\substack{i=4,5,\ldots,n\\ j=1,2,\ldots,ip}}\right).$$
\end{itemize}
Also,
$$\wk_2:=\bigcup_{s=2}^{p+1}\wk_{2s}.$$
where
\begin{itemize}

  \item For each $s=2,3,\ldots,p$,
$$\wk_{2s}:=V\left(\textbf{W}\cup\left\{p_{3j}\right\}_{j=1}^{2p}\cup\left\{X_{21}^{(t)}\right\}_{t=s}^{p}\cup\left\{X_{32}^{(t)}\right\}_{t=p-(s-2)}^{p}\cup\left\{p_{ij}\right\}_{\substack{i=4,5,\ldots,n\\ j=1,2,\ldots,ip}}\right).$$
 
  \item 
$$\wk_{2\,p+1}:=V\left(\textbf{W}\cup\left\{p_{3j}\right\}_{j=1}^{2p}\cup\left\{X_{32}^{(t)}\right\}_{t=1}^{p}\cup\left\{p_{ij}\right\}_{\substack{i=4,5,\ldots,n\\ j=1,2,\ldots,ip}}\right).$$
\end{itemize}     
And,
$$\wk_3:=\bigcup_{s=2}^{p}\wk_{3s}.$$
where
\begin{itemize}
  \item For each $s=2,3,\ldots,p$,
$$\wk_{3s}:=V\left(\textbf{W}\cup\left\{p_{3j}\right\}_{j=1}^{2p}\cup\left\{X_{13}^{(t)}\right\}_{t=s}^{p}\cup\left\{X_{32}^{(t)}\right\}_{t=p-(s-2)}^{p}\cup\left\{p_{ij}\right\}_{\substack{i=4,5,\ldots,n\\ j=1,2,\ldots,ip}}\right).$$
  
\end{itemize}
\end{notation}

\vspace{0.2cm}

\begin{proposition}
\label{firstdecompGTs1}
For $Y_p(\gl_n)$
 $$\gts_1=\wk_{1}\cup\wk_{2},\ \ \mbox{whenever}\ \ p=1;$$
 $$\gts_1=\wk_{1}\cup\wk_{2}\cup\wk_{3},\ \ \mbox{whenever}\ \ p=2;$$
 $$\gts_1=\wk^{p}\cup\wk_{1}\cup\wk_{2}\cup\wk_{3},\ \ \mbox{whenever}\ \ p\geq 3.$$
\end{proposition}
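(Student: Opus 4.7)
The plan is to establish the two inclusions separately. The inclusion $\supseteq$ is almost tautological: each $\wk$ piece carries $\textbf{W}$ among its defining equations, which subsumes the polynomials $p_{1j}$ and $p_{2j}$ for $1\leq j\leq p$ (these reduce to coordinates after imposing $X_{12}^{(t)}=X_{22}^{(t)}=X_{11}^{(t)}=0$), and explicitly includes the block $\{p_{ij}\}_{i\geq 4}$. The only nontrivial point is to verify that the cubic polynomials $p_{3j}$ of $\gts_1$ which are \emph{dropped} from the definition of a given $\wk$ piece still vanish on that piece. This is done by direct inspection of the formulas from Proposition \ref{equivGTs}: once one imposes $\textbf{W}$ together with the additional vanishing equations on $X_{13}^{(t)}, X_{21}^{(t)}$, or $X_{32}^{(t)}$ listed in Notation \ref{subvarWeak}, every monomial appearing in the missing $p_{3j}$ contains at least one vanishing factor.

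For the inclusion $\subseteq$, the starting observation is that on $\gts_1$ the top polynomial collapses to
$$p_{3,3p}(A) = a_{13}^{(p)}\,a_{21}^{(p)}\,a_{32}^{(p)} = 0,$$
because $X_{11}^{(t)},X_{12}^{(t)},X_{22}^{(t)}$ all vanish on $\gts_1$. Thus at least one of $a_{13}^{(p)}, a_{21}^{(p)}, a_{32}^{(p)}$ is zero, yielding three root branches. Within each branch I iterate Lemma \ref{lemmarecursN1} to propagate the vanishing downward and match the resulting pattern with the defining equations of one of the $\wk$ pieces. For example, when only $a_{13}^{(p)}=0$, successive applications of part (1)(a) force $a_{13}^{(t)}=0$ for every $t$, placing $A\in \wk_{11}$; when $a_{13}^{(p)}=a_{21}^{(p)}=0$ but $a_{32}^{(p)}\neq 0$, alternating parts (1)(a)--(1)(c) extends the chain of vanishing $a_{13}^{(t)}$'s and $a_{21}^{(t)}$'s from the top down, landing in some $\wk_{1s}$ with $2\leq s\leq p+1$. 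Symmetric alternations using parts (2) and (3) of the lemma handle the two remaining symmetric cases, producing membership in $\wk_{2s}$ or $\wk_{3s}$.

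The distinction between the three displayed formulas comes from the terminal configuration in which all three of $a_{13}^{(p)}, a_{21}^{(p)}, a_{32}^{(p)}$ vanish simultaneously. For $p\geq 3$ this configuration is not absorbed into any single $\wk_{1s}, \wk_{2s}, \wk_{3s}$ (each of these forces at most two of the three top entries at a time), which is precisely why the additional piece $\wk^{p}$ must be added. For $p=1,2$ the small index range makes the triple vanishing automatically fall inside several of the $\wk_{is}$ pieces, so $\wk^{p}$ becomes redundant and is omitted; for $p=1$ the family $\wk_3$ is also empty by its index range. The main obstacle is the combinatorial bookkeeping of the cascades in the $\subseteq$ direction: at every step one must verify that the precise hypothesis of the correct sub-case of Lemma \ref{lemmarecursN1} is met (i.e.\ that the required consecutive top entries have already been shown to vanish) and that the terminal pattern of zeros agrees with one of the explicit equation lists in Notation \ref{subvarWeak}, including the edge cases $s=1, s=p, s=p+1$ which have to be treated by hand.
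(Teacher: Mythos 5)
Your proposal follows essentially the same route as the paper: the reverse inclusion is checked by noting that the dropped polynomials vanish identically on each $\wk$ piece, and the forward inclusion starts from the factorization $p_{3\,3p}(A)=a_{13}^{(p)}a_{21}^{(p)}a_{32}^{(p)}=0$ on $\gts_1$ (together with $p_{3\,3p-1}$ for $p>1$) and then cascades the vanishing downward via the case analysis of Lemma \eqref{lemmarecursN1}, with the simultaneous vanishing of all three top entries accounting for the extra piece $\wk^{p}$ when $p\geq 3$. This matches the paper's argument, so no further comparison is needed.
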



\begin{proof}
Clearly, each $\wk$'s is contained in $\gts_1$, then we only need to prove
 $$\gts_1\subseteq\wk_{1}\cup\wk_{2},\ \ \mbox{for}\ \ p=1,$$
 $$\gts_1\subseteq\wk_{1}\cup\wk_{2}\cup\wk_{3},\ \ \mbox{for}\ \ p=2,$$
 $$\gts_1\subseteq\wk^{p}\cup\wk_{1}\cup\wk_{2}\cup\wk_{3},\ \ \mbox{for}\ \ p\geq 3.$$
  In fact, we consider 
$A=\left(a_{ij}^{(t)}\right)_{\substack{i,j=1,2,\ldots,n\\ t=1,2,\ldots,p}}\in\gts_1$. 
For this proof we will use

\begin{align*}
p_{3\, 2p+i}&=\sum_{r=i}^{p}\sum_{s=p+i-r}^{p} \biggl\{-X_{13}^{(r)}X_{22}^{(s)}X_{31}^{(2p+i-r-s)}+X_{12}^{(r)}X_{23}^{(s)}X_{31}^{(2p+i-r-s)}+\\
			&\ \ \ +X_{13}^{(r)}X_{21}^{(s)}X_{32}^{(2p+i-r-s)}\biggr\},\ \ \ i=p-1,p.
\end{align*}
By Corollary \eqref{equivGTsYp(n)},
\begin{align*}
p_{3\,3p}(A)&=-\underbrace{a_{13}^{(p)}a_{22}^{(p)}a_{31}^{(p)}+a_{12}^{(p)}a_{23}^{(p)}a_{31}^{(p)}}_{=0}+a_{13}^{(p)}a_{21}^{(p)}a_{32}^{(p)}=0,
\end{align*}
hence $a_{13}^{(p)}=0\ \ \ \mbox{or}\ \ \ a_{21}^{(p)}=0\ \ \ \mbox{or}\ \ \ a_{32}^{(p)}=0,$
and as a consequence, when $p=1$ we have $A\in\wk_{1}\cup\wk_{2}$. 

For $p>1$, follows of the Corollary \eqref{equivGTsYp(n)}

\begin{align*}
p_{3\,3p-1}(A)&=\sum_{r=p-1}^{p}\sum_{s=2p-1-r}^{p} \biggl\{\underbrace{-a_{13}^{(r)}a_{22}^{(s)}a_{31}^{(3p-1-r-s)}+a_{12}^{(r)}a_{23}^{(s)}a_{31}^{(3p-1-r-s)}}_{=0}+\\
			  &\ \ \ +a_{13}^{(r)}a_{21}^{(s)}a_{32}^{(3p-1-r-s)}\biggr\}\\
             0&=\sum_{r=p-1}^{p}\sum_{s=2p-1-r}^{p} a_{13}^{(r)}a_{21}^{(s)}a_{32}^{(3p-1-r-s)}\\
             0&=a_{13}^{(p-1)}a_{21}^{(p)}a_{32}^{(p)} +a_{13}^{(p)}a_{21}^{(p-1)}a_{32}^{(p)}+a_{13}^{(p)}a_{21}^{(p)}a_{32}^{(p-1)}=0,\\
\end{align*}
hence
$$a_{13}^{(p)}=a_{13}^{(p-1)}=0\ \ \mbox{or}\ \ a_{13}^{(p)}=a_{21}^{(p)}=0\ \ \mbox{or}\ \ a_{13}^{(p)}=a_{32}^{(p)}=0$$
or
$$a_{21}^{(p)}=a_{21}^{(p-1)}=0\ \ \mbox{or}\ \ a_{21}^{(p)}=a_{32}^{(p)}=0$$
or
$$a_{32}^{(p)}=a_{32}^{(p-1)}=0,$$

\noindent thus, in the case $p=2$ we have $A\in\wk_{1}\cup\wk_{2}\cup\wk_{3}$. 

Now, when $p>2$ by lemma \eqref{lemmarecursN1} we have the cases:

$$a_{13}^{(p)}=a_{13}^{(p-1)}=a_{13}^{(p-2)}=0\ \ \mbox{or}\ \ a_{13}^{(p)}=a_{13}^{(p-1)}=a_{21}^{(p)}=0\ \ \mbox{or}\ \ a_{13}^{(p)}=a_{13}^{(p-1)}=a_{32}^{(p)}=0$$
or
$$a_{13}^{(p)}=a_{21}^{(p)}=a_{21}^{(p-1)}=0\ \ \mbox{or}\ \ a_{13}^{(p)}=a_{21}^{(p)}=a_{32}^{(p)}=0$$
or
$$a_{13}^{(p)}=a_{32}^{(p)}=a_{32}^{(p-1)}=0$$
or
$$a_{21}^{(p)}=a_{21}^{(p-1)}=a_{21}^{(p-2)}=0\ \ \mbox{or}\ \ a_{21}^{(p)}=a_{21}^{(p-1)}=a_{32}^{(p)}=0$$
or
$$a_{21}^{(p)}=a_{32}^{(p)}=a_{32}^{(p-1)}=0$$
or
$$a_{32}^{(p)}=a_{32}^{(p-1)}=a_{32}^{(p-2)}=0.$$

Whenever 
$$a_{13}^{(p)}=a_{21}^{(p)}=a_{32}^{(p)}=0,$$
we have $A\in\wk^p$ and therefore we can assume that 

$$A\not\in\wk^p.$$
Similarly, when 
$$a_{13}^{(t)}=0,\ \ \mbox{for all}\ \ t=1,2,\ldots,p$$
or
$$a_{21}^{(t)}=0,\ \ \mbox{for all}\ \ t=1,2,\ldots,p$$
or
$$a_{32}^{(t)}=0,\ \ \mbox{for all}\ \ t=1,2,\ldots,p$$
we have that $A\in\wk_{11}\cup\wk_{1\,p+1}\cup\wk_{2\,p+1}$ and thus we can assume that 

$$A\not\in\wk_{11}\cup\wk_{1\,p+1}\cup\wk_{2\,p+1}.$$
Continuing with the same argument, suppose that

$$A\not\in\wk^p\cup\wk_{1}\cup\wk_{2}$$
and we will prove that $A\in\wk_{3}.$

Clearly, there exists $s\in\left\{2,3,\ldots,p\right\}$ such that
$$a_{13}^{(t)}=0,\ \ \mbox{for each}\ \ t=s,s+1,\ldots,p\ \ \mbox{and}\ \ a_{13}^{(s-1)}\neq0$$
because $A\not\in\wk_{11}$, follows from the Lemma \eqref{lemmarecursN1} item $(1.a)$ 
$$a_{21}^{(p)}=0\ \ \mbox{or}\ \ a_{32}^{(p)}=0.$$
We note that, if $a_{21}^{(p)}=0$ there exists 
$j\in\left\{p-(s-3),p-(s-4),\ldots,p\right\}$ such that 
$$a_{21}^{(t)}=0,\ \ \mbox{for all}\ \ t=j,j+1,\ldots,p\ \ \mbox{and}\ \ a_{21}^{(j-1)}\neq0$$
because $A\not\in\wk_{1s}$ and as a consequence of the Lemma \eqref{lemmarecursN1} item $(1.b)$, we have
$$a_{32}^{(p)}=0$$
which is a contradiction with the fact that $A\not\in\wk^p$. Therefore  
$$a_{21}^{(p)}\neq0\ \ \mbox{and}\ \ a_{32}^{(p)}=0.$$
Again, by Lemma \eqref{lemmarecursN1} item $(3)$ we have $a_{32}^{(p-1)}=0$ and continuing of this way we have that 
$$A\in\wk_{3s}\subseteq\wk_3.$$

\end{proof}


\vspace{0.3cm}

Now, we going to improve the previous descomposition from the Proposition \eqref{firstdecompGTs1} and with that aim, we will need the following Lemma:


\begin{lemma}
\label{lemmarecursN2}
Suppose $A=\left(a_{ij}^{(t)}\right)_{\substack{i,j=1,2,\ldots,n\\ t=1,2,\ldots,p}}\in\gts_1$, $2\leq s\leq p-1$ and $0\leq j\leq p-s$. 

If $a_{13}^{(t)}=0,\ \forall t\geq s$ and 
$$a_{32}^{(p)}=a_{32}^{(p-1)}=a_{32}^{(p-2)}=\cdots=a_{32}^{(p-(s-2))}=a_{32}^{(p-(s-2)-1)}=\cdots=a_{32}^{(p-(s-2)-j)}=0,$$
then
$$a_{13}^{(s-1)}=0\ \ \mbox{or}\ \ a_{21}^{(p)}=0\ \ \mbox{or}\ \ a_{32}^{(p-(s-2)-j-1)}=0.$$

\end{lemma}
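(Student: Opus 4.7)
The plan is to exhibit the identity by evaluating a single defining polynomial of $\gts_1$ at $A$ and tracking which monomials actually survive the known vanishings. Since $A\in\gts_1$, the components satisfy $a_{11}^{(t)}=a_{12}^{(t)}=a_{22}^{(t)}=0$ for every $t$, so from any polynomial $p_{3,k}$ only the cubic expressions of the form $X_{13}^{(r)}X_{21}^{(\ell)}X_{32}^{(m)}$ and the quadratic expressions $X_{13}^{(\ell)}X_{31}^{(m)}$ and $X_{23}^{(\ell)}X_{32}^{(m)}$ can contribute. This is the same mechanism exploited in Lemma \eqref{lemmarecursN1}, only now applied at a different index.

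The choice of polynomial is dictated by the total degree of the target monomial $a_{13}^{(s-1)}a_{21}^{(p)}a_{32}^{(p-(s-2)-j-1)}$, which equals $2p-j$. Accordingly, I would apply the relation $p_{3,\,p+i}(A)=0$ with $i:=p-j$; the standing hypotheses $2\leq s\leq p-1$ and $0\leq j\leq p-s$ guarantee $i\in\{s,s+1,\ldots,p\}\subseteq\{2,\ldots,p\}$, so the explicit expansion of $p_{3,\,p+i}$ from Lemma \eqref{GTsPolyn10} (equivalently Proposition \eqref{equivGTs}) is available.

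After substituting, I would check the following in turn: the quadratic term $a_{13}^{(\ell)}a_{31}^{(p+i-\ell)}$ vanishes because $\ell\geq i=p-j\geq s$ forces $a_{13}^{(\ell)}=0$; the quadratic term $a_{23}^{(\ell)}a_{32}^{(p+i-\ell)}$ vanishes because the index $p+i-\ell=2p-j-\ell$ always exceeds the cutoff $p-(s-2)-j$ whenever $\ell\leq p$ and $s\geq 2$; and the second cubic double sum $\sum_{r=i}^{p}\sum_{\ell}\cdots$ vanishes because $r\geq i\geq s$ again kills $a_{13}^{(r)}$. In the remaining piece $\sum_{r=1}^{i-1}\sum_{\ell=i-r}^{p}a_{13}^{(r)}a_{21}^{(\ell)}a_{32}^{(2p-j-r-\ell)}$, the hypothesis $a_{13}^{(r)}=0$ for $r\geq s$ cuts the $r$-range down to $\{1,\ldots,s-1\}$, and the hypothesis on $a_{32}$ forces $\ell\geq p+s-1-r$; intersecting with $\ell\leq p$ collapses the surviving pairs to the single pair $(r,\ell)=(s-1,p)$, yielding $a_{13}^{(s-1)}a_{21}^{(p)}a_{32}^{(p-(s-2)-j-1)}=0$, from which the three-way alternative follows. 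The main obstacle is the combinatorial bookkeeping on these index ranges, in particular verifying that $(s-1,p)$ lies inside the original summation bounds and that every other pair is annihilated by one of the three standing vanishings; this reduces to finitely many inequality checks with no subtle cancellations.
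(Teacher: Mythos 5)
Your proposal is correct and coincides with the paper's own argument: the polynomial $p_{3,\,p+i}$ with $i=p-j$ is exactly the $p_{3,\,2p-j}$ used in the paper, and the elimination of terms (via $a_{11}^{(t)}=a_{12}^{(t)}=a_{22}^{(t)}=0$, the hypothesis $a_{13}^{(r)}=0$ for $r\geq s$, and the $a_{32}$-vanishing window) isolates the same single monomial $a_{13}^{(s-1)}a_{21}^{(p)}a_{32}^{(p-(s-2)-j-1)}$. The only cosmetic difference is that the paper splits the index bookkeeping into the cases $s=2$, $2<s<p-j$, $s=p-j$, whereas you handle the ranges uniformly.
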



\begin{proof}
Since that $A\in\gts_1$, $s\leq p-j$ and $p-(s-2)-j\leq p-j$ we have the equation 
\begin{align*}
p_{3\,2p-j}(A)&=-\underbrace{\sum_{t=p-j}^pa_{23}^{(t)}a_{32}^{(2p-j-t)}}_{=0}-\underbrace{\sum_{t=p-j}^pa_{13}^{(t)}a_{31}^{(2p-j-t)}}_{=0}+\\
              &\ \ \ +\sum_{r=1}^{p-j-1}\sum_{t=p-j-r}^p\biggl\{\underbrace{-a_{13}^{(r)}a_{22}^{(t)}a_{31}^{(2p-j-r-t)}+a_{12}^{(r)}a_{23}^{(t)}a_{31}^{(2p-j-r-t)}}_{=0}+\\
              &\ \ \ +a_{13}^{(r)}a_{21}^{(t)}a_{32}^{(2p-j-r-t)}\biggr\}+\\
              &\ \ \ +\sum_{r=p-j}^{p}\sum_{t=1}^{2p-j-r-1}\biggl\{\underbrace{-a_{13}^{(r)}a_{22}^{(t)}a_{31}^{(2p-j-r-t)}+a_{12}^{(r)}a_{23}^{(t)}a_{31}^{(2p-j-r-t)}}_{=0}+\\
              &\ \ \ +\underbrace{a_{13}^{(r)}a_{21}^{(t)}a_{32}^{(2p-j-r-t)}}_{=0}\biggr\}\\
             0&= \sum_{r=1}^{p-j-1}\sum_{t=p-j-r}^pa_{13}^{(r)}a_{21}^{(t)}a_{32}^{(2p-j-r-t)}.
\end{align*}

If $s=2$ 
\begin{align*}
p_{3\,2p-j}(A)&= \sum_{t=p-j-1}^{p}a_{13}^{(1)}a_{21}^{(t)}a_{32}^{(2p-j-1-t)}+\underbrace{\sum_{r=2}^{p-j-1}\sum_{t=p-j-r}^{p}a_{13}^{(r)}a_{21}^{(t)}a_{32}^{(2p-j-r-t)}}_{=0}\\
             0&= \sum_{t=p-j-1}^{p}a_{13}^{(1)}a_{21}^{(t)}a_{32}^{(2p-j-1-t)}.
\end{align*}

If $2<s<p-j$
\begin{align*}          
p_{3\,2p-j}(A)&= \sum_{r=1}^{s-2}\sum_{t=p-j-r}^{p}a_{13}^{(r)}a_{21}^{(t)}a_{32}^{(2p-j-r-t)}+\sum_{t=p-j-s+1}^{p}a_{13}^{(s-1)}a_{21}^{(t)}a_{32}^{(2p-j-s+1-t)}+\\
                         &\ \ \ +\underbrace{\sum_{r=s}^{p-j-1}\sum_{t=p-j-r}^{p}a_{13}^{(r)}a_{21}^{(t)}a_{32}^{(2p-j-r-t)}}_{=0}\\     
                       0 &= \underbrace{\sum_{r=1}^{s-2}\sum_{m=p-j-r}^{p}a_{13}^{(r)}a_{21}^{(2p-j-r-m)}a_{32}^{(m)}}_{=0}+\sum_{t=p-j-s+1}^{p}a_{13}^{(s-1)}a_{21}^{(t)}a_{32}^{(2p-j-s+1-t)}\\
                       0 &= \sum_{t=p-j-s+1}^{p}a_{13}^{(s-1)}a_{21}^{(t)}a_{32}^{(2p-j-s+1-t)}.
\end{align*}

If $s=p-j$
\begin{align*}
p_{3\,2p-j}(A)&= \sum_{r=1}^{p-j-2}\sum_{t=p-j-r}^{p}a_{13}^{(r)}a_{21}^{(t)}a_{32}^{(2p-j-r-t)}+\sum_{t=1}^{p}a_{13}^{(p-j-1)}a_{21}^{(t)}a_{32}^{(p+1-t)}\\
             0&= \underbrace{\sum_{r=1}^{p-j-2}\sum_{m=p-j-r}^{p}a_{13}^{(r)}a_{21}^{(2p-j-r-m)}a_{32}^{(m)}}_{=0}+\sum_{t=1}^{p}a_{13}^{(p-j-1)}a_{21}^{(t)}a_{32}^{(p+1-t)}\\
             0&= \sum_{t=1}^{p}a_{13}^{(p-j-1)}a_{21}^{(t)}a_{32}^{(p+1-t)}.
\end{align*}

Now, by all previous cases we have
\begin{align*}                          
p_{3\,2p-j}(A)&= \sum_{t=p-j-s+1}^{p}a_{13}^{(s-1)}a_{21}^{(t)}a_{32}^{(2p-j-s+1-t)}=0\\
              &= \sum_{t=p-j-s+1}^{p-1}a_{13}^{(s-1)}a_{21}^{(t)}a_{32}^{(2p-j-s+1-t)}+a_{13}^{(s-1)}a_{21}^{(p)}a_{32}^{(p-j-s+1)}=0\\
              &= \underbrace{\sum_{m=p-j-s+2}^{p}a_{13}^{(s-1)}a_{21}^{(2p-j-s+1-m)}a_{32}^{(m)}}_{=0}+a_{13}^{(s-1)}a_{21}^{(p)}a_{32}^{(p-j-s+1)}=0\\
              &=a_{13}^{(s-1)}a_{21}^{(p)}a_{32}^{(p-j-s+1)}=0.
\end{align*}

\end{proof}


\begin{proposition}
\label{W3cWpuW1uW2}
For $Y_p(\gl_n)$ with $p>1$
$$\wk_3\subseteq \wk^p\cup\wk_1\cup\wk_2.$$
\end{proposition}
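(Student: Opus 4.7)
The plan is to fix an arbitrary $A=(a_{ij}^{(t)})\in\wk_{3s}$ for some $s\in\{2,\ldots,p\}$, so that by definition $a_{13}^{(t)}=0$ for all $t\geq s$ and $a_{32}^{(t)}=0$ for all $t\geq p-s+2$; in particular $a_{13}^{(p)}=a_{32}^{(p)}=0$. The goal is to show $A\in\wk^p\cup\wk_1\cup\wk_2$, and the natural dichotomy is whether $a_{21}^{(p)}$ vanishes.

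If $a_{21}^{(p)}=0$, then $a_{13}^{(p)}=a_{21}^{(p)}=a_{32}^{(p)}=0$. Since $A\in\gts_1$ already satisfies the conditions encoded by $\mathbf{W}$ and the polynomial relations $p_{3j}(A)=0$ for every $j\in\{1,\ldots,3p\}$ (in particular for $j\leq 3p-3$), the point $A$ meets every defining equation of $\wk^p$ in Notation \eqref{subvarWeak}, so $A\in\wk^p$.

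If $a_{21}^{(p)}\neq 0$, I would iterate Lemma \eqref{lemmarecursN2}. Let $s_0$ and $s_1$ denote the current upper-vanishing thresholds of $X_{13}$ and $X_{32}$, initialized to $s_0=s$ and $s_1=p-s+2$. Each application of the lemma (with parameter $j=p-s_0+2-s_1$) yields one of three conclusions: $a_{13}^{(s_0-1)}=0$, $a_{21}^{(p)}=0$, or $a_{32}^{(s_1-1)}=0$. The middle option is excluded by assumption, so at every step either $s_0$ or $s_1$ strictly decreases. After finitely many steps we reach either $s_0=1$, which means $a_{13}^{(t)}=0$ for all $t\in\{1,\ldots,p\}$ so that $A\in\wk_{11}\subseteq\wk_1$, or $s_1=1$, giving $a_{32}^{(t)}=0$ for all $t$ and hence $A\in\wk_{2,p+1}\subseteq\wk_2$.

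The main obstacle is the boundary of Lemma \eqref{lemmarecursN2}, which requires $2\leq s_0\leq p-1$; in particular the initial case $s=p$ (which is the only case when $p=2$) lies outside its scope. To handle it, I would imitate the lemma's proof but with the polynomial $p_{3,2p}$ from Proposition \eqref{equivGTs}: after imposing the $\mathbf{W}$-vanishings together with the upper vanishings of $X_{13}$ and $X_{32}$ coming from $A\in\wk_{3p}$, the identity $p_{3,2p}(A)=0$ collapses to a single monomial of the form $a_{13}^{(p-1)}a_{21}^{(p)}a_{32}^{(1)}=0$, supplying precisely the missing first step of the iteration (and settling the proof outright when $p=2$).
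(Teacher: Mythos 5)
Your proof is correct and follows essentially the same route as the paper: the dichotomy on $a_{21}^{(p)}$ (with the branch $a_{21}^{(p)}=0$ landing in $\wk^{p}$), followed by iterated applications of Lemma \eqref{lemmarecursN2} that drive one of the two upper-vanishing thresholds for $X_{13}$ and $X_{32}$ down to $1$, yielding $\wk_{11}\subseteq\wk_1$ or $\wk_{2\,p+1}\subseteq\wk_2$. Your separate treatment of the boundary case $s=p$ via $p_{3\,2p}(A)=a_{13}^{(p-1)}a_{21}^{(p)}a_{32}^{(1)}=0$ is a correct patch of a point the paper glosses over, since its invocation of the lemma for $s=p$ sits outside the lemma's stated range $2\leq s\leq p-1$ even though the underlying computation extends exactly as you describe.
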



\begin{proof}
Suppose $s\in\left\{2,3,\ldots,p\right\}$ and 
$A=\left(a_{ij}^{(t)}\right)_{\substack{i,j=1,2,\ldots,n\\ t=1,2,\ldots,p}}\in\wk_{3s}$, 
this implies
$$a_{13}^{(p)}=a_{13}^{(p-1)}=\cdots=a_{13}^{(s)}=0,$$
$$a_{32}^{(p)}=a_{32}^{(p-1)}=\cdots=a_{32}^{(p-(s-2))}=0.$$
By Lemma \eqref{lemmarecursN2}
$$a_{13}^{(s-1)}=0\ \ \mbox{or}\ \ a_{21}^{(p)}=0\ \ \mbox{or}\ \ a_{32}^{(p-(s-2)-1)}=0.$$

If $a_{21}^{(p)}=0$, then $A\in\wk^p$.

If $a_{13}^{(t)}=0\ \ \forall t=1,2,\ldots,p$, then $A\in\wk_{11}\subseteq \wk_1$.

If there exists $t\in\left\{2,3,\ldots,s\right\}$ such that 
$$a_{13}^{(j)}=0\ \ \forall j=t,t+1,\ldots,p \ \ \mbox{and}\ \ \ a_{13}^{(t-1)}\neq0,$$
then of the Lemma \eqref{lemmarecursN2} 
$$A\in\wk_{2\,p+1}\subseteq \wk_2.$$

\end{proof}


\begin{corollary}
\label{decompweak}
For $Y_p(\gl_n)$
 $$\gts_1=\wk_{1}\cup\wk_{2},\ \ \mbox{whenever}\ \ p=1,2;$$
 $$\gts_1=\wk^{p}\cup\wk_{1}\cup\wk_{2},\ \ \mbox{whenever}\ \ p\geq 3.$$
\end{corollary}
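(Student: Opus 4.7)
The plan is to obtain the corollary as an immediate consequence of Propositions \ref{firstdecompGTs1} and \ref{W3cWpuW1uW2}, which together cover all three cases. For the case $p=1$, Proposition \ref{firstdecompGTs1} already asserts $\gts_1=\wk_1\cup\wk_2$, so nothing further is needed. For $p\geq 3$, Proposition \ref{firstdecompGTs1} gives
$$\gts_1=\wk^{p}\cup\wk_{1}\cup\wk_{2}\cup\wk_{3},$$
and Proposition \ref{W3cWpuW1uW2} gives $\wk_3\subseteq\wk^p\cup\wk_1\cup\wk_2$. Substituting the second inclusion into the first collapses the union to $\gts_1=\wk^p\cup\wk_1\cup\wk_2$. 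The reverse inclusion is automatic, since each $\wk^p,\wk_1,\wk_2$ is by construction a subvariety of $\gts_1$ (cf.\ Notation \ref{subvarWeak}); equivalently, the defining polynomials of $\gts_1$ (the set $\textbf{W}$ together with $\{p_{ij}\}_{i\geq 3}$) all lie among or are consequences of the polynomials cutting out each $\wk$.

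For the remaining case $p=2$, Proposition \ref{firstdecompGTs1} gives $\gts_1=\wk_1\cup\wk_2\cup\wk_3$ and Proposition \ref{W3cWpuW1uW2} gives $\wk_3\subseteq\wk^2\cup\wk_1\cup\wk_2$, so the task reduces to verifying the additional containment $\wk^2\subseteq\wk_1\cup\wk_2$. I would check that $\wk^2\subseteq\wk_{12}$ as follows: on $\wk^2$ the conditions $\textbf{W}$ force $X_{11}^{(i)}=X_{22}^{(i)}=X_{12}^{(i)}=0$, and $X_{13}^{(2)}=X_{21}^{(2)}=X_{32}^{(2)}=0$ kill every remaining monomial of $p_{34},p_{35},p_{36}$ in the explicit formulas of Proposition \ref{equivGTs}. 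Once those three higher polynomials are seen to vanish identically on $\wk^2$, the defining conditions of $\wk^2$ subsume those of $\wk_{12}$ (which already asks for $\textbf{W}$, for $p_{3j}=0$ up to $j=2p=4$, and for $X_{13}^{(2)}=X_{21}^{(2)}=0$), giving $\wk^2\subseteq\wk_{12}\subseteq\wk_1$. Combined with $\wk_3\subseteq\wk^2\cup\wk_1\cup\wk_2$, this yields $\gts_1=\wk_1\cup\wk_2$ in the case $p=2$.

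The only genuinely computational step is the direct substitution verifying that $p_{34},p_{35},p_{36}$ vanish on $\wk^2$, and this is the place I expect to be most delicate: each of these is a fairly involved sum in the quadratic and cubic monomials $X_{ij}^{(r)}X_{kl}^{(s)}$ and $X_{ij}^{(r)}X_{kl}^{(s)}X_{mn}^{(t)}$, so the check must be done carefully term by term. However, once the $X_{12}$'s, $X_{11}$'s and $X_{22}$'s are set to zero by $\textbf{W}$, only three types of cubic monomials survive, and each of these contains one of the factors $X_{13}^{(2)}$, $X_{21}^{(2)}$, or $X_{32}^{(2)}$ when the outer degree index is $\geq 2p+2=6$, so the vanishing is straightforward but slightly tedious. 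No new ideas beyond those in the two preceding propositions are required; the corollary is essentially a bookkeeping consequence of them.
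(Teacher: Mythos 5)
Your argument is correct and follows the same route as the paper: the corollary is deduced by combining Propositions \eqref{firstdecompGTs1} and \eqref{W3cWpuW1uW2}. The one place where you go beyond the paper's one-line proof is the case $p=2$, and you are right to do so: there the two propositions only yield $\gts_1=\wk^{2}\cup\wk_{1}\cup\wk_{2}$, so the extra containment $\wk^{2}\subseteq\wk_{1}\cup\wk_{2}$ is genuinely needed to reach the stated form, and the paper leaves this step implicit. Your verification $\wk^{2}\subseteq\wk_{12}$ is the right one: after the variables in $\textbf{W}$ are set to zero, the only monomials of $p_{34}$ that survive are (up to sign) $X_{23}^{(2)}X_{32}^{(2)}$, $X_{13}^{(2)}X_{31}^{(2)}$, $X_{13}^{(1)}X_{21}^{(1)}X_{32}^{(2)}$, $X_{13}^{(1)}X_{21}^{(2)}X_{32}^{(1)}$ and $X_{13}^{(2)}X_{21}^{(1)}X_{32}^{(1)}$, each of which carries one of the factors $X_{13}^{(2)},X_{21}^{(2)},X_{32}^{(2)}$ and hence vanishes on $\wk^{2}$. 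Two minor remarks that do not affect correctness: only $p_{34}$ actually requires this check, since $p_{35}$ and $p_{36}$ are not among the defining equations of $\wk_{12}$ and their vanishing on $\wk^{2}$ is already part of the assertion in Notation \eqref{subvarWeak} that $\wk^{p}$ is a subvariety of $\gts_1$; and the threshold you quote for every surviving cubic monomial to contain a level-$p$ factor should be $j\geq 3p-2=4$ rather than $j\geq 2p+2=6$, because $r+s+t=j>3(p-1)$ with $r,s,t\leq p$ is what forces one of the indices to equal $p$.
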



\begin{proof}
 It follows of the Propositions \eqref{firstdecompGTs1} and \eqref{W3cWpuW1uW2}.

 \end{proof}


\subsection{Equidimensionality for the weak version of the Gelfand-Tsetlin}

We will prove that the weak version $\gts_1$ of the $\gts$ to $Y_p(\gl_3)$ is equidimensional of dimension $\dim\gts_1=3p$, showing the equidimensionality of every subvariety $\wk$'s in the decomposition of the Corollary  \eqref{decompweak}. With this aim the following Theorem of V. Futorny, A. Molev and S. Ovsienko will be very useful.


\begin{theorem}
\label{EquidGTsYp(2)}
The Gelfand-Tsetlin variety $\gts$ for $Y_p(\gl_2)$ is  equidimensional of dimension $p$.
\end{theorem}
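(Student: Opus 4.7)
The plan is to exploit the decomposition of $\gts$ provided by Proposition \eqref{decomp1} and to show that when $n=2$ every piece in that decomposition is in fact an irreducible affine space of dimension $p$. Specialised to $n=2$ the ``extra'' polynomials $p_{ij}$ for $i\geq 3$ disappear, and the proposition reads
$$\gts=\gts_1\cup\gts_2\cup\cdots\cup\gts_{p+1}\subset\k^{4p},$$
where each $\gts_s$ is cut out by $\{p_{1i}\}_{i=1}^p$, $\{p_{2j}\}_{j=1}^{2p}$ together with the vanishing of a prescribed list of $p$ of the variables $X_{12}^{(i)},X_{21}^{(i)}$.

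I would then analyse each $\gts_s$ by hand. The polynomials $p_{1i}=X_{11}^{(i)}$ force $X_{11}^{(i)}=0$, and for $i=1,\ldots,p$ the polynomial $p_{2i}=X_{22}^{(i)}-\sum_{t=1}^{i-1}X_{12}^{(t)}X_{21}^{(i-t)}$ is linear in $X_{22}^{(i)}$ with leading coefficient $1$, so it determines $X_{22}^{(i)}$ as a polynomial expression in the surviving $X_{12}^{(j)},X_{21}^{(j)}$. The remaining polynomials are the quadratic ones $p_{2,p+j}=\sum_{t=j}^{p}X_{12}^{(t)}X_{21}^{(p+j-t)}$ for $j=1,\ldots,p$, and a direct index argument shows they vanish identically on $\gts_s$: in each monomial $X_{12}^{(t)}X_{21}^{(p+j-t)}$ either $t\geq s$ and the first factor is imposed to be zero, or $t\leq s-1$, in which case $p+j-t\geq p-s+j+1\geq p-s+2$ and the second factor is imposed to be zero. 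Hence $\gts_s$ is cut out by purely linear conditions, and its free parameters are exactly $(s-1)+(p-s+1)=p$ entries among the $X_{12}^{(i)},X_{21}^{(i)}$ (with the obvious modifications for $s=1$ and $s=p+1$, where all of one family is zeroed out). Thus $\gts_s\cong\k^p$ as an affine variety, in particular irreducible of dimension $p$.

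Finally, since $\gts$ is a finite union of irreducible subvarieties $\gts_1,\ldots,\gts_{p+1}$ each of dimension $p$, every irreducible component of $\gts$ must coincide with one of them (no proper inclusion can hold between two irreducible varieties of the same dimension). In particular all irreducible components of $\gts$ have dimension $p$, so $\gts$ is equidimensional of dimension $p$.

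The main technical obstacle is the index bookkeeping in the middle paragraph: one has to verify simultaneously, for every $s\in\{1,\ldots,p+1\}$ and every $j\in\{1,\ldots,p\}$, that each monomial in $p_{2,p+j}$ carries a factor forced to zero by the defining conditions of $\gts_s$. This is essentially the inverse of the combinatorics used in the proof of Proposition \eqref{decomp1}, and must be organised carefully enough that the count of free parameters consistently yields $p$.
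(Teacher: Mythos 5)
Your argument is correct, but it takes a genuinely different route from the paper: the paper does not prove Theorem \eqref{EquidGTsYp(2)} at all, it simply cites \cite{FMO}, where the statement comes out of the structure theory of $Y_p(\gl_2)$ developed there. You instead specialise Proposition \eqref{decomp1} to $n=2$ (legitimate: that proposition and the description of the polynomials $p_{1i},p_{2j}$ involve only the first two rows, and no circularity arises since Proposition \eqref{decomp1} does not depend on Theorem \eqref{EquidGTsYp(2)}), and then observe that each piece $\gts_s$ is the graph of a polynomial map over $\k^p$: the $p_{1i}$ kill the $X_{11}^{(i)}$, each $p_{2i}$ solves for $X_{22}^{(i)}$, and the quadratic relations $p_{2\,p+j}=\sum_{t=j}^{p}X_{12}^{(t)}X_{21}^{(p+j-t)}$ vanish identically on $\gts_s$ because for $t\le s-1$ one has $p+j-t\ge p-s+j+1\ge p-(s-2)$, so the factor $X_{21}^{(p+j-t)}$ is among the variables set to zero on $\gts_s$; this index check, which is the only delicate point, does go through, and the parameter count $(s-1)+(p-s+1)=p$ is consistent. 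Hence each $\gts_s\cong\k^p$ is irreducible of dimension $p$, every irreducible component of $\gts$ is contained in, and by maximality equals, some $\gts_s$, and equidimensionality follows. What your approach buys is a self-contained elementary proof that moreover exhibits the irreducible components of $\gts$ for $Y_p(\gl_2)$ explicitly among $p+1$ affine $p$-spaces; what the paper's citation buys is brevity and deference to the source on which the rest of the argument already relies.
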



\begin{proof}
 See \cite{FMO}.
 
\end{proof}


\begin{notation}
\label{notacPX}
For $P\in\k[x_1,x_2,\dots,x_n]$  and 
$\textbf{X}=\{x_{i_1},x_{i_1},\dots,x_{i_r}\}$ a set of varia\-bles, we will denote by $P^{\textbf{X}}$ the polynomial obtained from $P$ substituiting
 $$x_{i_1}=x_{i_2}=\cdots=x_{i_r}=0.$$ 
\end{notation}


\begin{proposition}
\label{W1s}
For all $1\leq s\leq p+1$ with $p>2$, the subvariety $\wk_{1s}$ of $\gts_1$ for $Y_p(\gl_3)$ is equidimensional of dimension $3p$.
\end{proposition}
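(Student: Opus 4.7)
The plan is to reduce the equidimensionality of $\wk_{1s}$ to the $Y_p(\gl_2)$-statement of Theorem \eqref{EquidGTsYp(2)} by three successive applications of Proposition \eqref{projhyper}, combined with Corollary \eqref{regsubseq} at the end. Throughout I grade $\k[X_{ij}^{(t)}]$ by $\deg X_{ij}^{(t)}=t$, so each $p_{3j}$ is homogeneous of positive degree $j$ and Propositions \eqref{regvseq}, \eqref{regperm} and Corollary \eqref{regsubseq} are available; equidimensionality of dimension $3p$ is then equivalent to regularity of the full defining sequence, since $9p-6p=3p$.

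The defining ideal of $\wk_{1s}$ contains the $4p$ linear forms in $\textbf{W}\cup\{X_{13}^{(t)}\}_{t=s}^{p}\cup\{X_{21}^{(t)}\}_{t=p-s+2}^{p}$. A first use of Proposition \eqref{projhyper} reduces the problem to showing that the restrictions $p_{3j}^{\textbf{X}}$ for $j=1,\dots,2p$ form a complete intersection in the polynomial ring $R$ on the remaining $5p$ variables $X_{13}^{(r)}$ $(r\le s-1)$, $X_{21}^{(t)}$ $(t\le p-s+1)$, and $X_{23}^{(t)}, X_{31}^{(t)}, X_{32}^{(t)}, X_{33}^{(t)}$ $(t\le p)$. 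Inspecting Lemma \eqref{GTsPolyn10} yields two structural facts: for $j=1,\dots,p$ one has $p_{3j}^{\textbf{X}}=X_{33}^{(j)}+R_j$, with $R_j$ free of all $X_{33}$-variables; while for $j=p+1,\dots,2p$ the polynomial $p_{3j}^{\textbf{X}}$ already involves no $X_{33}$-variable. The invertible linear change $Y_{33}^{(j)}:=X_{33}^{(j)}+R_j$ turns $p_{3j}^{\textbf{X}}$ into $Y_{33}^{(j)}$ for $j\le p$, and a second use of Proposition \eqref{projhyper} reduces the problem to showing that the $p$ polynomials $p_{3,p+i}^{\textbf{X}}$ ($i=1,\dots,p$) form a complete intersection in the polynomial ring $R'$ on the $4p$ variables obtained by deleting all $X_{33}^{(t)}$'s.

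For the last step I augment the sequence with $X_{13}^{(1)},\dots,X_{13}^{(s-1)}$ and apply Proposition \eqref{projhyper} a third time. A further inspection of Lemma \eqref{GTsPolyn10} shows that upon killing the $X_{13}^{(r)}$'s all cubic terms and all $X_{13}X_{31}$-quadratic terms disappear, leaving
\[
-\sum_{t=i}^{p}X_{23}^{(t)}X_{32}^{(p+i-t)},\qquad i=1,\dots,p.
\]
Up to sign and the renaming $X_{12}\leftrightarrow X_{23}$, $X_{21}\leftrightarrow X_{32}$, these are exactly the nontrivial polynomials cutting out $\gts$ for $Y_p(\gl_2)$ after its own reduction via Proposition \eqref{projhyper}. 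Theorem \eqref{EquidGTsYp(2)} together with Proposition \eqref{regvseq} therefore gives that they form a complete intersection in $\k[X_{23}^{(t)},X_{32}^{(t)}]$, and adjoining the $2p-s+1$ free variables $X_{21}^{(t)}$, $X_{31}^{(t)}$ preserves regularity. Hence $X_{13}^{(1)},\dots,X_{13}^{(s-1)},p_{3,p+1}^{\textbf{X}},\dots,p_{3,2p}^{\textbf{X}}$ is a regular sequence in $R'$; Corollary \eqref{regsubseq} extracts regularity of the subsequence $p_{3,p+1}^{\textbf{X}},\dots,p_{3,2p}^{\textbf{X}}$, and unwinding the three reductions gives $\dim\wk_{1s}=3p$.

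The main obstacle is the combinatorial verification of the two structural facts extracted from Lemma \eqref{GTsPolyn10}: that $p_{3j}^{\textbf{X}}$ for $j\le p$ is affine in its distinguished $X_{33}^{(j)}$-variable, and that $p_{3,p+i}^{\textbf{X}}$ collapses to the $Y_p(\gl_2)$-polynomial once the $X_{13}^{(r)}$'s are killed. Both are routine but require careful bookkeeping across the several index ranges in Lemma \eqref{GTsPolyn10}, especially for intermediate $s$ with $2\le s\le p$ where both the $X_{13}$- and $X_{21}$-truncations contribute nontrivially to the cubic terms $X_{13}^{(r)}X_{21}^{(t)}X_{32}^{(\cdot)}$.
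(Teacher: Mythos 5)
Your proof is correct and follows essentially the same strategy as the paper: augment the defining sequence of $\wk_{1s}$ with extra coordinate variables, transfer regularity back and forth using Propositions \eqref{regvseq}, \eqref{projhyper} and Corollary \eqref{regsubseq}, and identify the residual system with the Gelfand-Tsetlin variety for $Y_p(\gl_2)$, which is handled by Theorem \eqref{EquidGTsYp(2)}. The only difference is in the bookkeeping: the paper adjoins all of $X_{13}^{(t)},X_{21}^{(t)},X_{31}^{(t)}$ at once, so that the reduced polynomials become literally the $Y_p(\gl_2)$ system in the variables $X_{22}^{(t)},X_{23}^{(t)},X_{32}^{(t)},X_{33}^{(t)}$, whereas you first absorb the $X_{33}$-linear polynomials by a triangular change of variables and then adjoin only the $X_{13}^{(t)}$'s, keeping $X_{21}^{(t)},X_{31}^{(t)}$ as free variables; both reductions are valid.
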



\begin{proof}
Using the notation in \eqref{subvarWeak} the subvarieties $\wk_{1s}$'s are:

$$\wk_{1s}=V\left(\textbf{X}_s\cup\left\{p_{3j}\right\}_{j=1}^{2p}\right)\subset \k^{9p},\ \ \ s=1,2,\ldots,p,p+1,$$
where
 \begin{align*}
\textbf{X}_1    &=\bigcup_{i=1}^p\left\{X_{11}^{(i)},X_{12}^{(i)},X_{22}^{(i)}\right\}\cup\left\{X_{13}^{(i)}\right\}_{i=1}^p,\\
\textbf{X}_s    &=\bigcup_{i=1}^p\left\{X_{11}^{(i)},X_{12}^{(i)},X_{22}^{(i)}\right\}\cup\left\{X_{13}^{(i)}\right\}_{i=s}^p\cup\left\{X_{21}^{(i)}\right\}_{i=p-(s-2)}^p,\ \ s=2,3,\ldots,p,\\
\textbf{X}_{p+1}&=\bigcup_{i=1}^p\left\{X_{11}^{(i)},X_{12}^{(i)},X_{22}^{(i)}\right\}\cup\left\{X_{21}^{(i)}\right\}_{i=1}^p.\\
\end{align*}

We will prove that each $\wk_{1s}$ is equidimensional with
$$\dim\left(\wk_{1s}\right)=9p-(\underbrace{4p}_{=\left|\textbf{X}_s\right|}+2p)=3p.$$
In fact, by Corollary \eqref{regsubseq} and the Proposition \eqref{regvseq}, it is sufficient to prove that 

$$V\left(\textbf{X}_s\cup\left\{p_{3j}\right\}_{j=1}^{2p}\cup\textbf{Y}_s\right)\subset \k^{9p}$$  
with
 \begin{align*}
\textbf{Y}_1    &= \left\{X_{21}^{(i)}\right\}_{i=1}^{p}\cup\left\{X_{31}^{(i)}\right\}_{i=1}^p\\
\textbf{Y}_s    &= \left\{X_{13}^{(i)}\right\}_{i=1}^{s-1}\cup\left\{X_{21}^{(i)}\right\}_{i=1}^{p-(s-2)-1}\cup\left\{X_{31}^{(i)}\right\}_{i=1}^p,\ \ s=2,3,\ldots,p\\
\textbf{Y}_{p+1}&= \left\{X_{13}^{(i)}\right\}_{i=1}^{p}\cup\left\{X_{31}^{(i)}\right\}_{i=1}^p
\end{align*}
is equidimensional of dimension 
$9p-(\underbrace{4p}_{=\left|\textbf{X}_s\right|}+2p+\underbrace{2p}_{=\left|\textbf{Y}_s\right|})=p$.
Clearly, for each $s=1,2,\ldots,p,p+1$

$$V\left(\textbf{X}_s\cup\left\{p_{3j}\right\}_{j=1,}^{2p}\cup\textbf{Y}_s\right)=V\left(\textbf{Z}\cup\left\{p^{\textbf{Z}}_{3j}\right\}_{j=1}^{2p}\right)\subset \k^{9p},$$  
 where, by Proposition \eqref{equivGTs} and the Notation \eqref{notacPX}
 \begin{align*}
\textbf{Z}             &=\textbf{X}_s\cup\textbf{Y}_s= \bigcup_{i=1}^p\left\{X_{11}^{(i)},X_{12}^{(i)},X_{22}^{(i)},X_{13}^{(i)},X_{21}^{(i)},X_{31}^{(i)}\right\},\\
p_{31}^{\textbf{Z}}    &=X_{33}^{(1)},\\
p_{32}^{\textbf{Z}}    &= X_{33}^{(2)}-X_{23}^{(1)}X_{32}^{(1)}-\underbrace{X_{13}^{(1)}X_{31}^{(1)}}_{=0}\\
                       &=X_{33}^{(2)}-X_{23}^{(1)}X_{32}^{(1)},\\
p_{3i}^{\textbf{Z}}    &= -\sum_{r=1}^{i-2}\sum_{s=2}^{i-r-1}\underbrace{X_{13}^{(r)}X_{22}^{(s)}X_{31}^{(i-r-s)}}_{=0}+X_{33}^{(i)}-\sum_{s=1}^{i-1}\left(\underbrace{X_{13}^{(s)}X_{31}^{(i-s)}}_{=0}+X_{23}^{(s)}X_{32}^{(i-s)}\right)+\\
                       &\ \ \ +\sum_{r=1}^{i-2}\sum_{s=1}^{i-r-1}\biggl\{\underbrace{X_{12}^{(r)}X_{23}^{(s)}X_{31}^{(i-r-s)}+X_{13}^{(r)}X_{21}^{(s)}X_{32}^{(i-r-s)}}_{=0}\biggr\}\\
                       &= X_{33}^{(i)}-\sum_{s=1}^{i-1}X_{23}^{(s)}X_{32}^{(i-s)},\ \ \ i=3,4,\ldots,p, \\
p_{3\,p+1}^{\textbf{Z}}&=\sum_{r=1}^{p-1}\sum_{s=1}^{p-r}\biggl\{\underbrace{ X_{12}^{(r)}X_{23}^{(s)}X_{31}^{(p+1-r-s)}+X_{13}^{(r)}X_{21}^{(s)}X_{32}^{(p+1-r-s)}}_{=0}\biggr\}-\\
                       &\ \ \ -\sum_{s=1}^p\left(\underbrace{X_{13}^{(s)}X_{31}^{(p+1-s)}}_{=0}+ X_{23}^{(s)}X_{32}^{(p+1-s)}\right)-\sum_{r=1}^{p-1}\sum_{s=2}^{p-r}\underbrace{X_{13}^{(r)}X_{22}^{(s)}X_{31}^{(p+1-r-s)}}_{=0}\\
                       &= -\sum_{s=1}^pX_{23}^{(s)}X_{32}^{(p+1-s)},\\
p_{3\,p+i}^{\textbf{Z}}&=-\sum_{s=i}^pX_{23}^{(s)}X_{32}^{(p+i-s)}- \sum_{s=i}^p\underbrace{X_{13}^{(s)}X_{31}^{(p+i-s)}}_{=0}+\\
                       &\ \ \ +\sum_{r=1}^{i-1}\sum_{s=i-r}^p\biggl\{\underbrace{-X_{13}^{(r)}X_{22}^{(s)}X_{31}^{(p+i-r-s)}+X_{12}^{(r)}X_{23}^{(s)}X_{31}^{(p+i-r-s)}}_{=0}+\\
                       &\ \ \ +\underbrace{X_{13}^{(r)}X_{21}^{(s)}X_{32}^{(p+i-r-s)}}_{=0}\biggr\}+\\
                       &\ \ \ +\sum_{r=i}^{p}\sum_{s=1}^{p+i-r-1}\biggl\{\underbrace{-X_{13}^{(r)}X_{22}^{(s)}X_{31}^{(p+i-r-s)}+X_{12}^{(r)}X_{23}^{(s)}X_{31}^{(p+i-r-s)}}_{=0}+\\
                       &\ \ \ +\underbrace{X_{13}^{(r)}X_{21}^{(s)}X_{32}^{(p+i-r-s)}}_{=0}\biggr\}\\
                       &=-\sum_{s=i}^pX_{23}^{(s)}X_{32}^{(p+i-s)},\ \ \ \ \ i=2,3,\ldots,p.
\end{align*}

Now, projecting this variety on the variables
$$X_{i1}^{(t)},\ \ i=1,2,3;\ \ t=1,2,\ldots,p$$
and
$$X_{1j}^{(t)},\ \ j=1,2,3;\ \ t=1,2,\ldots,p$$
we have, by Propositions \eqref{regvseq} and \eqref{projhyper}, that to show the equidimensionality of 
$$V\left(\textbf{Z}\cup\left\{p^{\textbf{Z}}_{3j}\right\}_{j=1}^{2p}\right)\subset \k^{9p}$$  
with dimension $p$ is equivalent to prove that 
$$\widetilde{\wk}_{1}:=V\left(\textbf{A}\cup\left\{q_{3j}\right\}_{j=1}^{2p}\right)=V\left(\textbf{A}\cup\left\{p^{\textbf{Z}}_{3j}\right\}_{j=1}^{2p}\right)\subset \k^{9p-5p}=\k^{4p}$$   
is equidimensional of dimension 
 $$\dim\widetilde{\wk}_{1}=4p-(p+2p)=p,$$
where
\begin{align*}
\textbf{A}  &:=\left\{X_{22}^{(i)}\right\}_{i=1}^p, \\
q_{31}      &:=p_{31}^{\textbf{Z}}                
	      =X_{33}^{(1)}, \\
q_{3i}      &:=p_{3i}^{\textbf{Z}}
	      =X_{33}^{(i)}-\sum_{s=1}^{i-1}X_{23}^{(s)}X_{32}^{(i-s)},\ \ \ i=2,3,\ldots,p, \\
q_{3p+i}    &:=-p_{3p+i}^{\textbf{Z}}
	      =\sum_{s=i}^pX_{23}^{(s)}X_{32}^{(p+i-s)}, \ \ \ i=1,2,\ldots,p.
\end{align*}
Using the variable change $\varphi$
\begin{align*}
X_{22}^{(t)}&\longmapsto X_{11}^{(t)},\ \ \ t=1,2,\ldots,p\\
X_{23}^{(t)}&\longmapsto X_{12}^{(t)},\ \ \ t=1,2,\ldots,p\\
X_{32}^{(t)}&\longmapsto X_{21}^{(t)},\ \ \ t=1,2,\ldots,p\\
X_{33}^{(t)}&\longmapsto X_{22}^{(t)},\ \ \ t=1,2,\ldots,p
\end{align*}
we have $\widetilde{\wk}_{1}\cong\varphi\left(\widetilde{\wk}_{1}\right)$ and
 $$\varphi\left(\widetilde{\wk}_{1}\right)=\varphi\left(V\left(\textbf{A}\cup\left\{q_{3j}\right\}_{j=1}^{2p}\right)\right)=V\left(\varphi\left(\textbf{A}\right)\cup\left\{\varphi\left(q_{3j}\right)\right\}_{j=1}^{2p}\right)\subset \k^{4p},$$
with
\begin{align*}
\varphi\left(\textbf{A}\right)	&=\left\{\varphi\left(X_{22}^{(i)}\right)\right\}_{i=1}^p=\left\{X_{11}^{(i)}\right\}_{i=1}^p=\left\{p_{1i}\right\}_{i=1}^p,\\ 
\varphi\left(q_{31}\right)      &=\varphi\left(X_{33}^{(1)}\right)=X_{22}^{(1)}=p_{21},\\
\varphi\left(q_{3i}\right)      &=\varphi\left(X_{33}^{(i)}-\sum_{s=1}^{i-1}X_{23}^{(s)}X_{32}^{(i-s)}\right)=X_{22}^{(i)}- \sum_{s=1}^{i-1}X_{12}^{(s)}X_{21}^{(i-s)}\\
                                &=p_{2i},\ \ \ \mbox{for}\ \ i=2,3,\ldots,p\\
\varphi\left(q_{3\,p+i}\right)  &=\varphi\left(\sum_{s=i}^p X_{23}^{(s)}X_{32}^{(p+i-s)}\right)=\sum_{s=i}^p X_{12}^{(s)}X_{21}^{(p+i-s)}=p_{2\,p+i}, \ \ i=1,2,\ldots,p.
\end{align*}
Therefore, $\widetilde{\wk}_{1}\cong\varphi\left(\widetilde{\wk}_{1}\right)$ is the Gelfand-Tsetlin variety $\gts$ for Yangian $Y_p(\gl_2)$, which by Theorem \eqref{EquidGTsYp(2)} is equidimensional of dimension 
$$\dim\varphi\left(\widetilde{\wk}_{1}\right)=4p-3p=p.$$

\end{proof}


\begin{corollary}
\label{W1}
For $p>2$, the subvariety $\wk_{1}$ of $\gts_1$ for $Y_p(\gl_3)$ is equidimensional of dimension $3p$.
\end{corollary}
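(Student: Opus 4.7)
The plan is to derive the corollary directly from Proposition \eqref{W1s} together with the definition
$$\wk_1=\bigcup_{s=1}^{p+1}\wk_{1s}$$
given in Notation \eqref{subvarWeak}. First I would recall the general fact that if an algebraic variety $W$ is a finite union of closed subvarieties $W=W_1\cup W_2\cup\cdots\cup W_k$, each of which is equidimensional of the same dimension $d$, then $W$ itself is equidimensional of dimension $d$. The reason is that every irreducible component of $W$ must be contained in, and hence equal to, an irreducible component of some $W_i$ (otherwise we could enlarge it within the corresponding $W_i$, contradicting maximality in $W$).

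Next I would apply this observation: for $p>2$, Proposition \eqref{W1s} guarantees that each of the $p+1$ subvarieties $\wk_{11},\wk_{12},\ldots,\wk_{1\,p+1}$ is equidimensional of dimension $3p$. Since $\wk_1$ is the union of these, the general fact above immediately gives that $\wk_1$ is equidimensional of dimension $3p$.

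There is essentially no obstacle here: the substantive work was already carried out in Proposition \eqref{W1s}, where the equidimensionality of each individual piece $\wk_{1s}$ was established via the reduction to the Gelfand-Tsetlin variety for $Y_p(\gl_2)$ (Theorem \eqref{EquidGTsYp(2)}). The only point worth underlining in the write-up is that equidimensionality is preserved under finite unions precisely when all pieces share the same dimension, which is the case here.
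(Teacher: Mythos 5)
Your proposal is correct and coincides with the paper's own (very terse) proof: the paper likewise just cites Proposition \eqref{W1s} and the decomposition $\wk_1=\bigcup_{s=1}^{p+1}\wk_{1s}$ from Notation \eqref{subvarWeak}. Your explicit justification that a finite union of closed subvarieties, each equidimensional of the same dimension, is again equidimensional of that dimension is exactly the implicit step the paper leaves to the reader, and your argument for it is sound.
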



\begin{proof}
By Proposition \eqref{W1s} and Notation \eqref{subvarWeak}
$$\wk_{1}=\bigcup_{s=1}^{p+1}\wk_{1s}.$$

\end{proof}

\vspace{0.1cm}

\begin{proposition}
\label{W2s}
For all $2\leq s\leq p+1$ with $p>2$, the subvariety $\wk_{2s}$ of $\gts_1$ for $Y_p(\gl_3)$ is equidimensional of dimension $3p$.
\end{proposition}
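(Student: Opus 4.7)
The plan is to mirror the proof of Proposition \ref{W1s}, with the ``corner'' block on rows/columns $\{1,3\}$ playing the role that the bottom-right block on rows/columns $\{2,3\}$ played there. By Corollary \ref{regsubseq} and Proposition \ref{regvseq}, it suffices to exhibit a set $\textbf{Y}_s$ of $2p$ additional variable equations such that the enlarged variety $V(\textbf{X}_s\cup\{p_{3j}\}_{j=1}^{2p}\cup\textbf{Y}_s)\subset\k^{9p}$ is equidimensional of dimension $9p-(4p+2p+2p)=p$. Once this is established, the subsequence $\textbf{X}_s\cup\{p_{3j}\}_{j=1}^{2p}$ is regular by Corollary \ref{regsubseq}, so $\wk_{2s}$ is a complete intersection and Proposition \ref{regvseq}(ii) yields $\dim\wk_{2s}=3p$ with equidimensionality.

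The $\textbf{Y}_s$ I take is the complement of $\textbf{X}_s$ inside $\{X_{21}^{(t)},X_{23}^{(t)},X_{32}^{(t)}:1\le t\le p\}$: explicitly, for $2\le s\le p$,
$$\textbf{Y}_s:=\{X_{21}^{(t)}\}_{t=1}^{s-1}\cup\{X_{23}^{(t)}\}_{t=1}^{p}\cup\{X_{32}^{(t)}\}_{t=1}^{p-s+1},$$
and for $s=p+1$,
$$\textbf{Y}_{p+1}:=\{X_{21}^{(t)}\}_{t=1}^{p}\cup\{X_{23}^{(t)}\}_{t=1}^{p}.$$
In both cases $|\textbf{Y}_s|=2p$ and $\textbf{Z}_s:=\textbf{X}_s\cup\textbf{Y}_s$ sends the six variable types $X_{11},X_{12},X_{22},X_{21},X_{23},X_{32}$ to zero at every level, so only the $3p$ coordinates $X_{13}^{(t)},X_{31}^{(t)},X_{33}^{(t)}$ remain free. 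A direct inspection of the polynomials in Proposition \ref{equivGTs} shows that, after this substitution, every triple-product term and every $X_{23}X_{32}$-term disappears, leaving
\begin{align*}
p_{31}^{\textbf{Z}}&=X_{33}^{(1)},\\
p_{3i}^{\textbf{Z}}&=X_{33}^{(i)}-\sum_{t=1}^{i-1}X_{13}^{(t)}X_{31}^{(i-t)}\quad(i=2,\ldots,p),\\
p_{3,p+1}^{\textbf{Z}}&=-\sum_{t=1}^{p}X_{13}^{(t)}X_{31}^{(p+1-t)},\\
p_{3,p+i}^{\textbf{Z}}&=\sum_{t=i}^{p}X_{13}^{(t)}X_{31}^{(p+i-t)}\quad(i=2,\ldots,p).
\end{align*}

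Projecting away the five types $X_{12},X_{22},X_{21},X_{23},X_{32}$ (all contained in $\textbf{Z}_s$) via Proposition \ref{projhyper} reduces the problem to showing that $\widetilde{\wk}_2:=V(\textbf{A}\cup\{p_{3j}^{\textbf{Z}}\}_{j=1}^{2p})\subset\k^{4p}$ is equidimensional of dimension $p$, where $\textbf{A}:=\{X_{11}^{(i)}\}_{i=1}^{p}$ is added back because $X_{11}\in\textbf{Z}_s$ is kept among the projection coordinates. The variable change $\varphi:X_{11}^{(t)}\mapsto X_{11}^{(t)}$, $X_{13}^{(t)}\mapsto X_{12}^{(t)}$, $X_{31}^{(t)}\mapsto X_{21}^{(t)}$, $X_{33}^{(t)}\mapsto X_{22}^{(t)}$ then identifies $\widetilde{\wk}_2$ with the Gelfand-Tsetlin variety $\gts$ for $Y_p(\gl_2)$: $\textbf{A}$ maps to $\{p_{1i}\}_{i=1}^{p}$ and, up to sign, the $p_{3j}^{\textbf{Z}}$ map to the generators $p_{21}$, $p_{2i}$ ($i\ge 2$) and $p_{2,p+i}$ ($i\ge 1$) of $Y_p(\gl_2)$. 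Theorem \ref{EquidGTsYp(2)} then supplies equidimensionality with dimension $p$, completing the argument.

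The main obstacle I anticipate is the substitution bookkeeping: since $X_{21}$ and $X_{32}$ are only partially in $\textbf{X}_s$, one must verify case by case (particularly for the boundary cases $s=2$ and $s=p+1$) that the index complements supplied by $\textbf{Y}_s$ really cover every surviving monomial in the triple sums of each $p_{3j}$, and also that the omitted polynomials $p_{3,2p+i}$ vanish identically on $V(\textbf{X}_s)$ already (which requires an index-range argument in the spirit of Lemma \ref{lemmarecursN2}); once this combinatorial check is in place, the reduction to $\gts$ for $Y_p(\gl_2)$ is straightforward.
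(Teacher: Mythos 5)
Your proposal is correct and follows essentially the same route as the paper's own proof: the same auxiliary sets $\textbf{Y}_s$ (completing $X_{21},X_{23},X_{32}$ to all levels), the same reduction of the $p_{3j}$ to $X_{33}^{(i)}$ and $X_{13}X_{31}$-terms, the same projection and change of variables $X_{13}\mapsto X_{12}$, $X_{31}\mapsto X_{21}$, $X_{33}\mapsto X_{22}$, and the same final appeal to the equidimensionality of the Gelfand-Tsetlin variety for $Y_p(\gl_2)$. (Your closing worry about the omitted $p_{3,2p+i}$ is not needed here, since $\wk_{2s}$ is by definition cut out only by $\textbf{X}_s$ and $\{p_{3j}\}_{j=1}^{2p}$; their vanishing on $V(\textbf{X}_s)$ only matters for the containment $\wk_{2s}\subseteq\gts_1$ established earlier.)
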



\begin{proof}
Analogously to the proof of the Proposition \eqref{W1s}. Using the notation \eqref{subvarWeak} the subvarieties $\wk_{2s}$'s are:
 
 $$\wk_{2s}=V\left(\textbf{X}_s\cup\left\{p^{\textbf{X}_s}_{3j}\right\}_{j=1}^{2p}\right)\subset \k^{9p},\ \ \ s=2,3,\ldots,p,p+1,$$
 where
\begin{align*}
\textbf{X}_s    &=\bigcup_{i=1}^p\left\{X_{11}^{(i)},X_{12}^{(i)},X_{22}^{(i)}\right\}\cup\left\{X_{21}^{(i)}\right\}_{i=s}^p\cup\left\{X_{32}^{(i)}\right\}_{i=p-(s-2)}^p,\ \ s=2,3,\ldots,p\\
\textbf{X}_{p+1}&=\bigcup_{i=1}^p\left\{X_{11}^{(i)},X_{12}^{(i)},X_{22}^{(i)}\right\}_{i=1}^p\cup\left\{X_{32}^{(i)}\right\}_{i=1}^p.
\end{align*}

We will show that $\wk_{2s}$ is equidimensional of dimension 
$$\dim\left(\wk_{2s}\right)=9p-(\underbrace{4p}_{=\left|\textbf{X}_s\right|}+2p)=3p.$$
For this, by Corollary \eqref{regsubseq} and Proposition \eqref{regvseq} is enough to prove that

$$V\left(\textbf{X}_s\cup\left\{p^{\textbf{X}_s}_{3j}\right\}_{j=1,2,\ldots,2p}\cup\textbf{Y}_s\right)\subset \k^{9p},$$  
with
 \begin{align*}
\textbf{Y}_s    &= \left\{X_{21}^{(i)}\right\}_{i=1}^{s-1}\cup\left\{X_{32}^{(i)}\right\}_{i=1}^{p-(s-2)-1}\cup\left\{X_{23}^{(i)}\right\}_{i=1}^p,\ \ s=2,3,\ldots,p\\
\textbf{Y}_{p+1}&= \left\{X_{21}^{(i)}\right\}_{i=1}^{p}\cup\left\{X_{23}^{(i)}\right\}_{i=1}^p
\end{align*}
is equidimensional with dimension 
$9p-(\underbrace{4p}_{=\left|\textbf{X}_s\right|}+2p+\underbrace{2p}_{=\left|\textbf{Y}_s\right|})=p$. 
Clearly, for each $s=2,3,\ldots,p,p+1$

$$V\left(\textbf{X}_s\cup\left\{p^{\textbf{X}_s}_{3j}\right\}_{j=1}^{2p}\cup\textbf{Y}_s\right)=V\left(\textbf{Z}\cup\left\{p^{\textbf{Z}}_{3j}\right\}_{j=1}^{2p}\right)\subset \k^{9p},$$  
where, by Proposition \eqref{equivGTs} and Notation \eqref{notacPX}
 \begin{align*}
\textbf{Z}             &:=\textbf{X}_s\cup \textbf{Y}_s=\bigcup_{i=1}^p\left\{X_{11}^{(i)},X_{12}^{(i)},X_{22}^{(i)},X_{21}^{(i)},X_{23}^{(i)},X_{32}^{(i)}\right\},\\
p_{31}^{\textbf{Z}}    &=X_{33}^{(1)},\\
p_{3i}^{\textbf{Z}}    &=X_{33}^{(i)}-\sum_{s=1}^{i-1}X_{13}^{(s)}X_{31}^{(i-s)},\ \ \ i=2,3,\ldots,p, \\
p_{3\,p+i}^{\textbf{Z}}&=-\sum_{s=i}^pX_{13}^{(s)}X_{31}^{(p+i-s)},\ \ \ \ \ i=1,2,\ldots,p.
\end{align*}
Projecting this variety on the variables 

$$X_{i2}^{(t)},\ \ i=1,2,3;\ \ t=1,2,\ldots,p$$
and
$$X_{2j}^{(t)},\ \ j=1,2,3;\ \ t=1,2,\ldots,p$$
we have, by Propositions \eqref{regvseq} and \eqref{projhyper} that to show the equidimensionality of
$$V\left(\textbf{Z}\cup\left\{p^{\textbf{Z}}_{3j}\right\}_{j=1}^{2p}\right)\subset \k^{9p}$$
of dimension $p$ is equivalent to prove that 
$$\widetilde{\wk}_{2}:=V\left(\textbf{A}\cup\left\{q_{3j}\right\}_{j=1}^{2p}\right)=V\left(\textbf{A}\cup\left\{p^{\textbf{Z}}_{3j}\right\}_{j=1}^{2p}\right)\subset \k^{9p-5p}=\k^{4p}$$   
is equidimensional of dimension 
 $$\dim\widetilde{\wk}_{2}=4p-(p+2p)=p,$$
where
\begin{align*}
\textbf{A}&:=\left\{X_{11}^{(i)}\right\}_{i=1}^p,\\
q_{31}    &:=p_{31}^{\textbf{Z}}=X_{33}^{(1)},\\
q_{3i}    &:=p_{3i}^{\textbf{Z}}=X_{33}^{(i)}-\sum_{s=1}^{i-1}X_{13}^{(s)}X_{31}^{(i-s)},\ \ \ i=2,3,\ldots,p, \\
q_{3p+i}  &:=-p_{3p+i}^{\textbf{Z}}=\sum_{s=i}^pX_{13}^{(s)}X_{31}^{(p+i-s)}, \ \ i=1,2,\ldots,p
\end{align*}
and using the variable change $\varphi$
\begin{align*}
X_{11}^{(t)}&\longmapsto X_{11}^{(t)},\ \ \ t=1,2,\ldots,p\\
X_{13}^{(t)}&\longmapsto X_{12}^{(t)},\ \ \ t=1,2,\ldots,p\\
X_{31}^{(t)}&\longmapsto X_{21}^{(t)},\ \ \ t=1,2,\ldots,p\\
X_{33}^{(t)}&\longmapsto X_{22}^{(t)},\ \ \ t=1,2,\ldots,p,
\end{align*}
we have $\widetilde{\wk}_{2}\cong\varphi\left(\widetilde{\wk}_{2}\right)$ and
$$\varphi\left(\widetilde{\wk}_{2}\right)=\varphi\left(V\left(\textbf{A}\cup\left\{q_{3j}\right\}_{j=1}^{2p}\right)\right)=V\left(\varphi\left(\textbf{A}\right)\cup\left\{\varphi\left(q_{3j}\right)\right\}_{j=1}^{2p}\right)\subset k^{4p},$$
with
\begin{align*}
\varphi\left(\textbf{A}\right)&=\left\{X_{11}^{(i)}\right\}_{i=1}^p=\left\{p_{1i}\right\}_{i=1}^p,\\
\varphi\left(q_{31}\right)    &=\varphi\left(X_{33}^{(1)}\right)=X_{22}^{(1)}=p_{21},\\
\varphi\left(q_{3i}\right)    &=\varphi\left(X_{33}^{(i)}-\sum_{s=1}^{i-1}X_{13}^{(s)}X_{31}^{(i-s)}\right)=X_{22}^{(i)}-\sum_{s=1}^{i-1}X_{12}^{(s)}X_{21}^{(i-s)}=p_{2i},\\
                               &\ \ \ \mbox{for}\ \ i=2,3,\ldots,p,		      \\
\varphi\left(q_{3\,p+i}\right)&=\varphi\left(\sum_{s=i}^pX_{13}^{(s)}X_{31}^{(p+i-s)}\right)
                               =\sum_{s=i}^pX_{12}^{(s)}X_{21}^{(p+i-s)}
                               =p_{2\,p+i}, \ \ i=1,2,\ldots,p.\\
\end{align*}
Therefore, $\widetilde{\wk}_{2}\cong\varphi\left(\widetilde{\wk}_{2}\right)$ is Gelfand-Tsetlin variety $\gts$ for Yangian $Y_p(\gl_2)$, which by
Theorem \eqref{EquidGTsYp(2)} is equidimensional of dimension 
$$\dim\varphi\left(\widetilde{\wk}_{2}\right)=4p-3p=p.$$

\end{proof}


\begin{corollary}
\label{W2}
For $p>2$, the subvariety $\wk_{2}$ of $\gts_1$ for $Y_p(\gl_3)$ is equidimensional with dimension $3p$.
\end{corollary}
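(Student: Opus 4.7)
The plan is immediate and mirrors the argument the author already used for $\wk_1$ in Corollary \eqref{W1}. By Notation \eqref{subvarWeak}, the subvariety $\wk_2$ decomposes as the finite union
$$\wk_2=\bigcup_{s=2}^{p+1}\wk_{2s},$$
and by Proposition \eqref{W2s} every piece $\wk_{2s}$ appearing here is equidimensional of dimension $3p$. So the only content left is the purely topological observation that a finite union of equidimensional closed subsets all of the same dimension is again equidimensional of that common dimension.

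To record why this observation applies, let $C$ be an irreducible component of $\wk_2$. Being irreducible and contained in the finite closed cover $\{\wk_{2s}\}_{s=2}^{p+1}$, one of the pieces must contain $C$, say $C\subseteq\wk_{2s_0}$. Then $C$ lies in some irreducible component $D$ of $\wk_{2s_0}$; but $D$ is also an irreducible closed subset of $\wk_2$ containing $C$, so maximality of $C$ inside $\wk_2$ forces $C=D$. Proposition \eqref{W2s} then gives $\dim C=\dim D=3p$. Thus every irreducible component of $\wk_2$ has dimension $3p$, which is the assertion.

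There is essentially no obstacle to surmount in this corollary itself: the real technical work has already been discharged in Proposition \eqref{W2s}, where each $\wk_{2s}$ was reduced by eliminating the variables listed in $\textbf{X}_s\cup\textbf{Y}_s$, projecting onto the remaining coordinates via Propositions \eqref{regvseq} and \eqref{projhyper}, and then transported through the variable change $\varphi$ to the Gelfand--Tsetlin variety for $Y_p(\gl_2)$, whose equidimensionality is Theorem \eqref{EquidGTsYp(2)}. Once that proposition is in hand, Corollary \eqref{W2} is just the trivial union step above.
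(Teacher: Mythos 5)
Your proof is correct and follows exactly the paper's own argument: decompose $\wk_2=\bigcup_{s=2}^{p+1}\wk_{2s}$ via Notation \eqref{subvarWeak} and apply Proposition \eqref{W2s} to each piece. The explicit justification you give for why a finite union of equidimensional closed sets of the same dimension is again equidimensional is a welcome (if standard) detail that the paper leaves implicit.
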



\begin{proof}
By Proposition \eqref{W2s} and Notation \eqref{subvarWeak}
$$\wk_{2}=\bigcup_{s=2}^{p+1}\wk_{2s}.$$

\end{proof}

\vspace{0.4cm}

Now, we can prove the main result.

\vspace{0.5cm}

\begin{theorem}
\label{G1}
The variety $\gts_1$ for $Y_p(\gl_3)$ is equidimensional of dimension 
$$\dim\gts_1=3p.$$
\end{theorem}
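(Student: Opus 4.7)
The plan is to combine the decomposition from Corollary \ref{decompweak} with the equidimensionality results already established for the pieces. Since a finite union of equidimensional varieties of the same dimension is equidimensional of that dimension, it suffices to verify that every piece in the decomposition has dimension $3p$. I would split into the cases $p\in\{1,2\}$ and $p\geq 3$, because Corollary \ref{decompweak} takes different forms in these ranges and Corollaries \ref{W1}, \ref{W2} are stated only for $p>2$.

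For $p=1$ and $p=2$, I would lift the answer from the explicit component decompositions already computed for the full Gelfand-Tsetlin variety $\gts$ (Propositions \ref{Y1(3)} and \ref{Y2(3)}). Since $\gts_1\subseteq\gts$ is cut out from $\gts$ by the further conditions $X_{12}^{(i)}=0$ ($i=1,\ldots,p$), every irreducible component of $\gts$ either lies entirely in $\gts_1$ or meets it in a proper subvariety. Inspecting the lists of components in Propositions \ref{Y1(3)} and \ref{Y2(3)}, the components containing the required vanishing of the $X_{12}^{(i)}$ are precisely those whose union equals $\gts_1$, and each such component is irreducible of dimension $3p$. Hence $\gts_1$ is equidimensional of dimension $3p$ in these cases.

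For $p\geq 3$, Corollary \ref{decompweak} gives $\gts_1=\wk^{p}\cup\wk_{1}\cup\wk_{2}$, and Corollaries \ref{W1}, \ref{W2} provide $\dim\wk_{1}=\dim\wk_{2}=3p$ with each equidimensional. The remaining step is to prove that $\wk^{p}$ is also equidimensional of dimension $3p$. I would follow the blueprint of Propositions \ref{W1s} and \ref{W2s}: impose the variable conditions $\textbf{W}\cup\{X_{32}^{(p)},X_{21}^{(p)},X_{13}^{(p)}\}$ to reduce to a variety in $\k^{6p-3}$ cut out by the $3p-3$ specialized polynomials $p_{3j}^{\textbf{W}\cup\{X_{32}^{(p)},X_{21}^{(p)},X_{13}^{(p)}\}}$ ($j=1,\ldots,3p-3$). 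Then, using that the ambient polynomial ring is Cohen-Macaulay, Proposition \ref{regvseq} reduces the problem to showing this sequence is a complete intersection; a projection via Proposition \ref{projhyper} onto the coordinates $X_{33}^{(t)},X_{23}^{(t)},X_{32}^{(t)}$ (analogously to how $\varphi$ was used in the proofs of Propositions \ref{W1s} and \ref{W2s}) should, after a suitable change of variables, identify the relevant quotient with the Gelfand-Tsetlin variety $\gts$ for $Y_p(\gl_2)$, whose equidimensionality of dimension $p$ is Theorem \ref{EquidGTsYp(2)}.

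The main obstacle is this last step for $\wk^p$. In contrast to the pieces $\wk_{1s}$ and $\wk_{2s}$, the specialized polynomials on $\wk^{p}$ still retain cubic terms of the form $X_{13}^{(r)}X_{21}^{(s)}X_{32}^{(i-r-s)}$ (coming from the last summand in the expression for $p_{3i}$), so the direct identification with the GT variety for $Y_p(\gl_2)$ is not immediate. I expect one has to decompose $\wk^p$ further according to which of $X_{13}^{(i)},X_{21}^{(i)},X_{32}^{(i)}$ vanish at the top indices, exactly as was done for $\gts_1$ via the auxiliary Lemmas \ref{lemmarecursN1} and \ref{lemmarecursN2}, thereby killing the cubic obstruction on each piece and reducing each to a $Y_p(\gl_2)$-type variety. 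Once every piece of $\wk^p$ is shown to be equidimensional of dimension $3p$, combining with $\wk_1$ and $\wk_2$ yields the equidimensionality of $\gts_1$ claimed in the theorem.
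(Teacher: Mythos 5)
Your reduction of the theorem (for $p\geq 3$) to the equidimensionality of the three pieces $\wk^{p}$, $\wk_{1}$, $\wk_{2}$ is the right first step and matches the paper, and Corollaries \eqref{W1} and \eqref{W2} do dispose of $\wk_{1}$ and $\wk_{2}$. The genuine gap is that you never actually prove the remaining claim that $\wk^{p}$ is equidimensional of dimension $3p$. You correctly observe that the naive reduction to the Gelfand--Tsetlin variety of $Y_p(\gl_2)$ fails because the specialized polynomials retain the cubic terms $X_{13}^{(r)}X_{21}^{(s)}X_{32}^{(i-r-s)}$, but your proposed fix --- a further decomposition of $\wk^{p}$ in the spirit of Lemmas \eqref{lemmarecursN1} and \eqref{lemmarecursN2} --- is left entirely unexecuted, and it is far from clear it would close: those lemmas control only the top-index variables $X_{13}^{(p)},X_{21}^{(p)},X_{32}^{(p)}$, which already vanish on $\wk^{p}$, so the surviving cubic monomials involve only lower indices and are not killed by the same mechanism. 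The idea you are missing is that the whole proof is an \emph{induction on $p$}. The paper adjoins to the defining ideal of $\wk^{p}$ the three further coordinates $X_{23}^{(p)},X_{31}^{(p)},X_{33}^{(p)}$ (so that, together with $\textbf{W}$ and $X_{32}^{(p)},X_{21}^{(p)},X_{13}^{(p)}$, all nine variables $X_{ij}^{(p)}$ vanish); by Corollary \eqref{regsubseq} and Proposition \eqref{regvseq} it then suffices to show that the enlarged variety is equidimensional of dimension $3p-3$, and after projecting away the $X_{ij}^{(p)}$ via Proposition \eqref{projhyper} the specialized polynomials $p_{3j}^{\textbf{Y}}$, $j=1,\dots,3p-3$, become exactly the defining polynomials of $\gts_1$ for $Y_{p-1}(\gl_3)$. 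The induction hypothesis --- equidimensionality of dimension $3(p-1)$ at level $p-1$ --- then finishes the argument. Without this reduction (or some substitute for it), your proof of the key remaining piece is only a conjecture about a strategy.

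Separately, your treatment of the base cases $p=1,2$ is shakier than it needs to be. Since $\gts_1=\gts\cap V\left(X_{12}^{(1)},\dots,X_{12}^{(p)}\right)$, the irreducible components of $\gts_1$ are not simply those components listed in Propositions \eqref{Y1(3)} and \eqref{Y2(3)} whose ideals already contain the $X_{12}^{(i)}$: a component of $\gts$ not contained in that linear subspace still meets it, and its intersection could a priori contribute an extra component of $\gts_1$ not contained in any of the ``good'' ones. One must check that each such intersection is absorbed by a component that does contain $\{X_{12}^{(i)}=0\}$ (which happens here, but you do not verify it). For $p=1$ it is cleaner to quote Theorem \eqref{fracagln} directly, since $\gts_1$ at level $1$ is the weak version $V_3$; for $p=2$ the paper leans on the explicit computation behind Proposition \eqref{Y2(3)}.
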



\begin{proof}
We will prove by induction over $p$. 

By Theorem \eqref{fracagln} or Proposition \eqref{Y1(3)}, we have that $\gts_1$ for $Y_1(\gl_3)$ is equidimensional with dimension 
$$\dim\gts_1=3.$$
Similarly, of the Proposition \eqref{Y2(3)} follows that $\gts_1$ for $Y_2(\gl_3)$ is equidimensional of dimension 
$$\dim\gts_1=6.$$
Now suppose $p>2$ and that the variety $\gts_1$ for $Y_{p-1}(\gl_3)$ is equidimensional with dimension $3(p-1)$.

As a consequence of the Corollary \eqref{decompweak}, we have the decomposition 
 $$\gts_1=\wk^{p}\cup\wk_{1}\cup\wk_{2},\ \ \mbox{for}\ \ p\geq 3.$$
By corollaries \eqref{W1} and \eqref{W2}, the components $\wk_{1}$ and $\wk_{2}$ are equidimensionals of dimension $3p$. 

We will show that $\wk^p$ is equidimensional with dimension $3p$. Using the notation \eqref{subvarWeak}, we have
 $$\wk^p=V\left(\textbf{X}\cup\left\{p_{3j}\right\}_{j=1}^{3p-3}\right)\subset \k^{9p},$$
where
$$\textbf{X}=\bigcup_{i=1}^p\left\{X_{11}^{(i)},X_{12}^{(i)},X_{22}^{(i)}\right\}\cup\left\{X_{32}^{(p)},X_{21}^{(p)},X_{13}^{(p)}\right\}.$$
To prove that $\wk^p$ is equidimensional of $\dim\left(\wk^p\right)=3p$, by Corollary \eqref{regsubseq} and Proposition \eqref{regvseq}, is sufficient to show that 

$$V\left(\textbf{X}\cup\left\{p_{3j}\right\}_{j=1,2}^{3p-3}\cup\left\{X_{23}^{(p)},X_{31}^{(p)},X_{33}^{(p)}\right\}\right)\subset \k^{9p}$$
is equidimensional with dimension $9p-(6p+3)=3p-3=3(p-1)$. Clearly

$$V\left(\textbf{Y}\cup\left\{p^{\textbf{Y}}_{3j}\right\}_{j=1}^{3p-3}\right)=V\left(\textbf{X}\cup\left\{p_{3j}\right\}_{j=1}^{3p-3}\cup\left\{X_{23}^{(p)},X_{31}^{(p)},X_{33}^{(p)}\right\}\right)\subset \k^{9p},$$
where, as a consequence of the Proposition \eqref{equivGTs} and Notation \eqref{notacPX}
\begin{align*}
\textbf{Y}         &=\bigcup_{i=1}^{p-1}\left\{X_{11}^{(i)},X_{12}^{(i)},X_{22}^{(i)}\right\}\cup\left\{X_{ij}^{(p)}\right\}_{i,j=1}^{3},\\
p_{31}^{\textbf{Y}}&=X_{33}^{(1)},\\
p_{32}^{\textbf{Y}}&=X_{33}^{(2)}-X_{23}^{(1)}X_{32}^{(1)}-X_{13}^{(1)}X_{31}^{(1)},
\end{align*}
for every $i=3,4,\ldots,p-1$ we have
\begin{align*}
p_{3i}^{\textbf{Y}}&=-\sum_{r=1}^{i-2}\sum_{s=2}^{i-r-1}\underbrace{X_{13}^{(r)}X_{22}^{(s)}X_{31}^{(i-r-s)}}_{=0}+X_{33}^{(i)}-\sum_{s=1}^{i-1}\left(X_{13}^{(s)}X_{31}^{(i-s)}+X_{23}^{(s)}X_{32}^{(i-s)}\right)+\\
		   &\ \ \ +\sum_{r=1}^{i-2}\sum_{s=1}^{i-r-1}\biggl\{\underbrace{X_{12}^{(r)}X_{23}^{(s)}X_{31}^{(i-r-s)}}_{=0}+X_{13}^{(r)}X_{21}^{(s)}X_{32}^{(i-r-s)}\biggr\}\\
		   &= X_{33}^{(i)}-\sum_{s=1}^{i-1}\left(X_{13}^{(s)}X_{31}^{(i-s)}+X_{23}^{(s)}X_{32}^{(i-s)}\right)+\sum_{r=1}^{i-2}\sum_{s=1}^{i-r-1}X_{13}^{(r)}X_{21}^{(s)}X_{32}^{(i-r-s)},
\end{align*}
analogously, 
\begin{align*}
p_{3p}^{\textbf{Y}}    &= -\sum_{r=1}^{p-2}\sum_{s=2}^{p-r-1}\underbrace{X_{13}^{(r)}X_{22}^{(s)}X_{31}^{(p-r-s)}}_{=0}+\underbrace{X_{33}^{(p)}}_{=0}-\sum_{s=1}^{p-1}\left(X_{13}^{(s)}X_{31}^{(p-s)}+X_{23}^{(s)}X_{32}^{(p-s)}\right)+\\
                       &\ \ \ +\sum_{r=1}^{p-2}\sum_{s=1}^{p-r-1}\biggl\{\underbrace{X_{12}^{(r)}X_{23}^{(s)}X_{31}^{(p-r-s)}}_{=0}+X_{13}^{(r)}X_{21}^{(s)}X_{32}^{(p-r-s)}\biggr\}\\
                       &= -\sum_{s=1}^{p-1}\left(X_{13}^{(s)}X_{31}^{(p-s)}+X_{23}^{(s)}X_{32}^{(p-s)}\right)+\sum_{r=1}^{p-2}\sum_{s=1}^{p-r-1}X_{13}^{(r)}X_{21}^{(s)}X_{32}^{(p-r-s)},\\
p_{3\,p+1}^{\textbf{Y}}&=\sum_{r=1}^{p-1}\sum_{s=1}^{p-r}\biggl\{ \underbrace{X_{12}^{(r)}X_{23}^{(s)}X_{31}^{(p+1-r-s)}}_{=0}+X_{13}^{(r)}X_{21}^{(s)}X_{32}^{(p+1-r-s)}\biggr\}-\\
                       &\ \ \ -\sum_{s=1}^p\left(X_{13}^{(s)}X_{31}^{(p+1-s)}+ X_{23}^{(s)}X_{32}^{(p+1-s)}\right)-\sum_{r=1}^{p-1}\sum_{s=2}^{p-r}\underbrace{X_{13}^{(r)}X_{22}^{(s)}X_{31}^{(p+1-r-s)}}_{=0}\\
                       &= \sum_{r=1}^{p-1}\sum_{s=1}^{p-r}X_{13}^{(r)}X_{21}^{(s)}X_{32}^{(p+1-r-s)}-\sum_{s=2}^{p-1}\left(X_{13}^{(s)}X_{31}^{(p+1-s)}+X_{23}^{(s)}X_{32}^{(p+1-s)}\right)-\\
                       &\ \ \ -\left(\underbrace{X_{13}^{(1)}X_{31}^{(p)}+X_{23}^{(1)}X_{32}^{(p)}+X_{13}^{(p)}X_{31}^{(1)}+X_{23}^{(p)}X_{32}^{(1)}}_{=0}\right)\\
                       &= \sum_{r=1}^{p-1}\sum_{s=1}^{p-r}X_{13}^{(r)}X_{21}^{(s)}X_{32}^{(p+1-r-s)}-\sum_{s=2}^{p-1}\left(X_{13}^{(s)}X_{31}^{(p+1-s)}+X_{23}^{(s)}X_{32}^{(p+1-s)}\right),
\end{align*}
for each $i=2,3,\ldots,p-2$ 
\begin{align*}
p_{3\,p+i}^{\textbf{Y}}&=-\sum_{s=i}^pX_{23}^{(s)}X_{32}^{(p+i-s)}- \sum_{s=i}^pX_{13}^{(s)}X_{31}^{(p+i-s)}+\\
                       &\ \ \ +\sum_{r=1}^{i-1}\sum_{s=i-r}^p\biggl\{-X_{13}^{(r)}X_{22}^{(s)}X_{31}^{(p+i-r-s)}+X_{12}^{(r)}X_{23}^{(s)}X_{31}^{(p+i-r-s)}+\\
                       &\ \ \ +X_{13}^{(r)}X_{21}^{(s)}X_{32}^{(p+i-r-s)}\biggr\}+\\
                       &\ \ \ +\sum_{r=i}^{p}\sum_{s=1}^{p+i-r-1}\biggl\{-X_{13}^{(r)}X_{22}^{(s)}X_{31}^{(p+i-r-s)}+X_{12}^{(r)}X_{23}^{(s)}X_{31}^{(p+i-r-s)}+\\
                       &\ \ \ +X_{13}^{(r)}X_{21}^{(s)}X_{32}^{(p+i-r-s)}\biggr\}\\
                       &= -\sum_{s=i+1}^{p-1}X_{23}^{(s)}X_{32}^{(p+i-s)}\underbrace{-X_{23}^{(i)}X_{32}^{(p)}-X_{23}^{(p)}X_{32}^{(i)}}_{=0}-\sum_{s=i+1}^{p-1}X_{13}^{(s)}X_{31}^{(p+i-s)}-\\
                       &\ \ \ \underbrace{-X_{13}^{(i)}X_{31}^{(p)}-X_{13}^{(p)}X_{31}^{(i)}}_{=0}+\\
                       &\ \ \ +\sum_{r=1}^{i-1}\sum_{s=i-r+1}^{p-1}\biggl\{\underbrace{-X_{13}^{(r)}X_{22}^{(s)}X_{31}^{(p+i-r-s)}+X_{12}^{(r)}X_{23}^{(s)}X_{31}^{(p+i-r-s)}}_{=0}+\\
                       &\ \ \ +X_{13}^{(r)}X_{21}^{(s)}X_{32}^{(p+i-r-s)}\biggr\}+\\
                       &\ \ \ +\sum_{r=1}^{i-1}\biggl\{\underbrace{-X_{13}^{(r)}X_{22}^{(i-r)}X_{31}^{(p)}+X_{12}^{(r)}X_{23}^{(i-r)}X_{31}^{(p)}+X_{13}^{(r)}X_{21}^{(i-r)}X_{32}^{(p)}}_{=0}\biggr\}+\\
                       &\ \ \ +\sum_{r=1}^{i-1}\biggl\{\underbrace{-X_{13}^{(r)}X_{22}^{(p)}X_{31}^{(i-r)}+X_{12}^{(r)}X_{23}^{(p)}X_{31}^{(i-r)}+X_{13}^{(r)}X_{21}^{(p)}X_{32}^{(i-r)}}_{=0}\biggr\}+\\
                       &\ \ \ +\sum_{r=i}^{p-1}\sum_{s=1}^{p+i-r-1}\biggl\{\underbrace{-X_{13}^{(r)}X_{22}^{(s)}X_{31}^{(p+i-r-s)}+X_{12}^{(r)}X_{23}^{(s)}X_{31}^{(p+i-r-s)}}_{=0}+\\
                       &\ \ \ +X_{13}^{(r)}X_{21}^{(s)}X_{32}^{(p+i-r-s)}\biggr\}+\\
                       &\ \ \ +\sum_{s=1}^{i-1}\biggl\{\underbrace{-X_{13}^{(p)}X_{22}^{(s)}X_{31}^{(i-s)}+X_{12}^{(p)}X_{23}^{(s)}X_{31}^{(i-s)}+X_{13}^{(p)}X_{21}^{(s)}X_{32}^{(i-s)}}_{=0}\biggr\},\\
                       &= -\sum_{s=i+1}^{p-1}X_{23}^{(s)}X_{32}^{(p+i-s)}-\sum_{s=i+1}^{p-1}X_{13}^{(s)}X_{31}^{(p+i-s)}+\\
                       &\ \ \ +\sum_{r=1}^{i-1}\sum_{s=i-r+1}^{p-1}X_{13}^{(r)}X_{21}^{(s)}X_{32}^{(p+i-r-s)}+\sum_{r=i}^{p-1}\sum_{s=1}^{p+i-r-1}X_{13}^{(r)}X_{21}^{(s)}X_{32}^{(p+i-r-s)},
\end{align*}
continuing with the same argument
\begin{align*}
p_{3\,2p-1}^{\textbf{Y}}&= \underbrace{-\sum_{s=p-1}^pX_{23}^{(s)}X_{32}^{(2p-1-s)}-\sum_{s=p-1}^pX_{13}^{(s)}X_{31}^{(2p-1-s)}}_{=0}+\\
                        &\ \ \ +\sum_{r=1}^{p-2}\sum_{s=p-1-r}^p\biggl\{\underbrace{-X_{13}^{(r)}X_{22}^{(s)}X_{31}^{(2p-1-r-s)}+X_{12}^{(r)}X_{23}^{(s)}X_{31}^{(2p-1-r-s)}}_{=0}+\\
                        &\ \ \ +X_{13}^{(r)}X_{21}^{(s)}X_{32}^{(2p-1-r-s)}\biggr\}+\\
                        &\ \ \ +\sum_{r=p-1}^{p}\sum_{s=1}^{2p-r-2}\biggl\{\underbrace{-X_{13}^{(r)}X_{22}^{(s)}X_{31}^{(2p-1-r-s)}+X_{12}^{(r)}X_{23}^{(s)}X_{31}^{(2p-1-r-s)}}_{=0}+\\
                        &\ \ \ +X_{13}^{(r)}X_{21}^{(s)}X_{32}^{(2p-1-r-s)}\biggr\}\\
                        &= \sum_{r=1}^{p-2}\sum_{s=p-r}^{p-1}X_{13}^{(r)}X_{21}^{(s)}X_{32}^{(2p-1-r-s)}+\underbrace{\sum_{r=1}^{p-2}X_{13}^{(r)}X_{21}^{(p-1-r)}X_{32}^{(p)}}_{=0}+\\
                        &\ \ \ +\underbrace{\sum_{r=1}^{p-2}X_{13}^{(r)}X_{21}^{(p)}X_{32}^{(p-1-r)}}_{=0}+\sum_{s=1}^{p-1}X_{13}^{(p-1)}X_{21}^{(s)}X_{32}^{(p-s)}+\\
                        &\ \ \ +\sum_{s=1}^{p-2}\underbrace{X_{13}^{(p)}X_{21}^{(s)}X_{32}^{(p-1-s)}}_{=0}\\
                        &= \sum_{r=1}^{p-2}\sum_{s=p-r}^{p-1}X_{13}^{(r)}X_{21}^{(s)}X_{32}^{(2p-1-r-s)}+\sum_{s=1}^{p-1}X_{13}^{(p-1)}X_{21}^{(s)}X_{32}^{(p-s)},\\
p_{3\,2p}^{\textbf{Y}}  &= \underbrace{-X_{23}^{(p)}X_{32}^{(p)}-X_{13}^{(p)}X_{31}^{(p)}}_{=0}+\\
                        &\ \ \ +\sum_{r=1}^{p-1}\sum_{s=p-r}^p\biggl\{\underbrace{-X_{13}^{(r)}X_{22}^{(s)}X_{31}^{(2p-r-s)}+X_{12}^{(r)}X_{23}^{(s)}X_{31}^{(2p-r-s)}}_{=0}+\\
                        &\ \ \ +X_{13}^{(r)}X_{21}^{(s)}X_{32}^{(2p-r-s)}\biggr\}+\\
                        &\ \ \ +\sum_{s=1}^{p-1}\biggl\{\underbrace{-X_{13}^{(p)}X_{22}^{(s)}X_{31}^{(p-s)}+X_{12}^{(p)}X_{23}^{(s)}X_{31}^{(p-s)}+X_{13}^{(p)}X_{21}^{(s)}X_{32}^{(p-s)}}_{=0}\biggr\}\\
                        &= \sum_{r=1}^{p-1}\sum_{s=p-r+1}^{p-1}X_{13}^{(r)}X_{21}^{(s)}X_{32}^{(2p-r-s)}+\underbrace{\sum_{r=1}^{p-1}X_{13}^{(r)}X_{21}^{(p-r)}X_{32}^{(p)}}_{=0}+\\
                        &\ \ \ +\underbrace{\sum_{r=1}^{p-1}X_{13}^{(r)}X_{21}^{(p)}X_{32}^{(p-r)}}_{=0}\\
                        &= \sum_{r=1}^{p-1}\sum_{s=p-r+1}^{p-1}X_{13}^{(r)}X_{21}^{(s)}X_{32}^{(2p-r-s)},
\end{align*}
for each $i=1,2,\ldots,p-4$ we have
\begin{align*}
p_{3\,2p+i}^{\textbf{Y}}&=\sum_{r=i}^{p}\sum_{s=p+i-r}^{p} \biggl\{\underbrace{-X_{13}^{(r)}X_{22}^{(s)}X_{31}^{(2p+i-r-s)}+X_{12}^{(r)}X_{23}^{(s)}X_{31}^{(2p+i-r-s)}}_{=0}+\\
						&\ \ \ +X_{13}^{(r)}X_{21}^{(s)}X_{32}^{(2p+i-r-s)}\biggr\}\\
						&= \sum_{r=i+1}^{p-1}\sum_{s=p+i-r}^{p}X_{13}^{(r)}X_{21}^{(s)}X_{32}^{(2p+i-r-s)}+\underbrace{X_{13}^{(i)}X_{21}^{(p)}X_{32}^{(p)}}_{=0}+\\
						&\ \ \ +\underbrace{\sum_{s=i}^{p} X_{13}^{(p)}X_{21}^{(s)}X_{32}^{(p+i-s)}}_{=0}\\
						&= \sum_{r=i+1}^{p-1}\sum_{s=p+i-r+1}^{p-1}X_{13}^{(r)}X_{21}^{(s)}X_{32}^{(2p+i-r-s)}\underbrace{+\sum_{r=i+1}^{p-1}X_{13}^{(r)}X_{21}^{(p+i-r)}X_{32}^{(p)}}_{=0}+\\
						&\ \ \ +\underbrace{\sum_{r=i+1}^{p-1}X_{13}^{(r)}X_{21}^{(p)}X_{32}^{(p+i-r)}}_{=0}\\
						&= \sum_{r=i+1}^{p-1}\sum_{s=p+i-r+1}^{p-1}X_{13}^{(r)}X_{21}^{(s)}X_{32}^{(2p+i-r-s)},
\end{align*}
analogously
\begin{align*}
p_{3\,3p-3}^{\textbf{Y}}&=\sum_{r=p-3}^{p}\sum_{s=2p-3-r}^{p} \biggl\{\underbrace{-X_{13}^{(r)}X_{22}^{(s)}X_{31}^{(3p-3-r-s)}+X_{12}^{(r)}X_{23}^{(s)}X_{31}^{(3p-3-r-s)}}_{=0}+\\
						&\ \ \ +X_{13}^{(r)}X_{21}^{(s)}X_{32}^{(3p-3-r-s)}\biggr\}\\
                        &= \underbrace{X_{13}^{(p-3)}X_{21}^{(p)}X_{32}^{(p)}+\sum_{s=p-1}^{p} X_{13}^{(p-2)}X_{21}^{(s)}X_{32}^{(2p-1-s)}}_{=0}+\\
                        &\ \ \ +\sum_{s=p-2}^{p} X_{13}^{(p-1)}X_{21}^{(s)}X_{32}^{(2p-2-s)}+\underbrace{\sum_{s=p-3}^{p} X_{13}^{(p)}X_{21}^{(s)}X_{32}^{(2p-3-s)}}_{=0}\\
                        &= \underbrace{X_{13}^{(p-1)}X_{21}^{(p-2)}X_{32}^{(p)}}_{=0}+X_{13}^{(p-1)}X_{21}^{(p-1)}X_{32}^{(p-1)}+\underbrace{X_{13}^{(p-1)}X_{21}^{(p)}X_{32}^{(p-2)}}_{=0}\\
                        &= X_{13}^{(p-1)}X_{21}^{(p-1)}X_{32}^{(p-1)}.\\
\end{align*}
 Now, projecting the variety on the variables
$$X_{ij}^{(p)},\ \ i,j=1,2,3,$$
we have, by Proposition \eqref{projhyper} that to show the equidimensionality of 
$$V\left(\textbf{Y}\cup\left\{p^{\textbf{Y}}_{3j}\right\}_{j=1}^{3p-3}\right)\subset \k^{9p}$$
of dimension $3(p-1)$, is equivalent to prove that  

$$\widetilde{\wk}^p=V\left(\textbf{Z}\cup\left\{q^{\textbf{Z}}_{3j}\right\}_{j=1}^{3p-3}\right)\subset \k^{9p-9},$$
with 

$$\textbf{Z}=\left\{X_{11}^{(i)}\right\}_{i=1}^{p-1}\cup \left\{X_{12}^{(i)}\right\}_{i=1}^{p-1}\cup\left\{X_{22}^{(i)}\right\}_{i=1}^{p-1}$$
and
$$q^{\textbf{Z}}_{3j}=p^{\textbf{Y}}_{3j}$$
is equidimensional with $\dim\left(\widetilde{\wk}^p\right)=9p-9-6(p-1)=3(p-1)$. But, we note that $\widetilde{\wk}^p$ is the weak version $\gts_1$ to 
$Y_{p-1}(\gl_3)$.

\end{proof}

\end{document}